\numberwithin{equation}{section}
\newcommand{\stirling}[2]{\genfrac{[}{]}{0pt}{}{#1}{#2}}
\newcommand{\stirlingsec}[2]{\genfrac{\{}{\}}{0pt}{}{#1}{#2}}
\newcommand{\stirlingb}[2]{B\hspace*{-0.2mm}\big[{#1},{#2}\big]}
\newcommand{\stirlingsecb}[2]{B\hspace*{-0.2mm}\big\{{#1},{#2}\big\}}
\newcommand{\R}{\mathbb{R}}
\newcommand{\SP}{\mathbb{S}}
\newcommand{\F}{\mathcal{F}}
\newcommand{\1}{\mathbbm{1}}
\newcommand{\PP}{\mathbb{P}}
\newcommand{\D}{\mathcal{D}}
\newcommand{\eps}{\varepsilon}
\newcommand{\Dna}{\mathcal{D}_n^A}
\newcommand{\Dnb}{\mathcal{D}_n^B}
\newcommand{\A}{\mathcal{A}}
\newcommand*\xbar[1]{%
   \hbox{%
     \vbox{%
       \hrule height 0.5pt 
       \kern0.25ex
       \hbox{%
         \kern-0.05em
         \ensuremath{#1}%
         \kern-0.1em
       }%
     }%
   }%
}
\DeclareMathOperator{\E}{\mathbb{E}}
\DeclareMathOperator{\lin}{lin}
\DeclareMathOperator{\pos}{pos}
\DeclareMathOperator{\relint}{relint}
\DeclareMathOperator{\linsp}{linsp}
\newcommand{\cA}{\mathcal{A}}
\newcommand{\cB}{\mathcal{B}}
\newcommand{\cF}{\mathcal{F}}
\newcommand{\cW}{\mathcal{W}}
\DeclareMathOperator*{\Ker}{Ker}
\renewcommand{\P}{\mathbb{P}}
\newcommand{\aff}{\mathop{\mathrm{aff}}\nolimits}
\newcommand{\eqdistr}{\stackrel{d}{=}}
\theoremstyle{plain}
\newtheorem{satz}{Theorem}[section]
\newtheorem{lem}[satz]{Lemma}
\newtheorem{kor}[satz]{Corollary}
\newtheorem{prop}[satz]{Proposition}
\theoremstyle{definition}
\newtheorem{bsp}[satz]{Example}
\theoremstyle{remark}
\newtheorem{bem}[satz]{Remark}
\newtheorem*{bem*}{Remark}
\begin{document}

\author{Thomas Godland}
\address{Institut f\"ur Mathematische Stochastik,
Westf\"alische Wilhelms-Universit\"at M\"unster,
Orl\'eans-Ring 10,
48149 M\"unster, Germany}
\email{t\_godl01@uni-muenster.de}

\author{Zakhar Kabluchko}
\address{Institut f\"ur Mathematische Stochastik,
Westf\"alische Wilhelms-Universit\"at M\"unster,
Orl\'eans-Ring 10,
48149 M\"unster, Germany}
\email{zakhar.kabluchko@uni-muenster.de}

\title[Conical tessellations associated with Weyl chambers]{Conical tessellations associated with Weyl chambers}

\keywords{Stochastic geometry, random cones, conic intrinsic volumes, quermassintegrals, Weyl chambers, hyperplane arrangements, conical tessellations, Stirling numbers}

\subjclass[2010]{Primary: 52A22, 60D05.  Secondary: 52A55, 51F15}

\begin{abstract}
We consider $d$-dimensional random vectors $Y_1,\ldots,Y_n$ that satisfy a mild general position assumption a.s.
The hyperplanes
\begin{align*}
(Y_i-Y_j)^\perp\;\; (1\le i<j\le n),\quad
\end{align*}
generate a conical tessellation of the Euclidean $d$-space which is closely related to the Weyl chambers of type $A_{n-1}$. We  determine the number of cones in this tessellation and show that it is a.s.\ constant. For a random cone chosen uniformly at random from this random tessellation, we compute expectations of several geometric functionals. These include the face numbers, as well as the  conic intrinsic volumes and the conical quermassintegrals. Under the additional assumption of exchangeability on $Y_1,\ldots,Y_n$, the same is done for the dual random cones which have the same distribution as the positive hull of $Y_1-Y_2,\ldots, Y_{n-1}-Y_n$ conditioned on the event that this positive hull is not equal to $\mathbb R^d$. All these expectations turn out to be distribution-free. \\
Similarly, we consider the conical tessellation induced by the hyperplanes
\begin{align*}
(Y_i+Y_j)^\perp\;\; (1 \le i<j\le n),\quad
(Y_i-Y_j)^\perp\;\; (1\le i<j\le n),\quad
Y_i^\perp\;\; (1\le i\le n).
\end{align*}
This tessellation is closely related to the Weyl chambers of type $B_n$. We compute the number of cones in this tessellation and the expectations of various geometric functionals for random cones drawn from this random tessellation.\\
The main ingredient in the proofs is a connection between the number of faces of the tessellation and the number of faces of the Weyl chambers of the corresponding type that are intersected non-trivially by a certain linear subspace in general position.
\end{abstract}

\maketitle
\section{Main results}\label{sec:main_results}

\subsection{Introduction}\label{sec:introduction}
A \textit{polyhedral cone}
(or, for the purpose of the present paper, just a \textit{cone}) is an intersection of a finite number of
closed half-spaces whose boundaries pass through the origin.
Consider a linear \textit{hyperplane arrangement}, that is a finite collection
$\cA= \{H_1,\ldots,H_n\}$ of distinct hyperplanes in $\R^d$ passing through the origin.
These hyperplanes  dissect $\R^d$ into finitely many  polyhedral cones.
More precisely, the set $\R^d\setminus \bigcup_{i=1}^nH_i$
consists of finitely many open connected components whose closures define polyhedral cones.
The collection of these cones is called the \textit{conical tessellation} generated by $H_1,\ldots,H_n$.
Under the condition that the hyperplanes are in \textit{general position}, meaning that
\begin{align*}
\dim(H_{i_1}\cap\ldots \cap H_{i_k})=d-k
\end{align*}
for all $k\le d$ and all indices $1\le i_1<\ldots<i_k\le n$,  Schl\"afli~\cite{Schlaefli_buch_1950}
derived the following classical formula for the number $C(n,d)$ of cones generated by these hyperplanes:
\begin{align}\label{Eq_Number_Schlaefli_Cones}
C(n,d)=2\sum_{i=0}^{d-1}\binom{n-1}{i}.
\end{align}
For a simple inductive proof of this formula, see \cite[Lemma 8.2.1]{Schneider2008}.

If the hyperplanes  $H_1,\ldots,H_n$ are chosen at random, for example independently and uniformly in the space of all linear hyperplanes, we obtain a \textit{random} conical tessellation. By intersecting the cones of a conical tessellation with the unit sphere $\SP^{d-1}$  we obtain a tessellation of the unit sphere by spherical polytopes; see Figure~\ref{pic:1} for a sample realization in dimension $d=3$.  This tessellation has been studied by Cover and Efron~\cite{CoverEfron_paper} and  Hug and Schneider~\cite{HugSchneider2016}. For further results on this and other types of random tessellations of the sphere we refer to~\cite{miles,arbeiter_zaehle,barany_etal,schneider_intersection,kabluchko_thaele_vor,hug_thaele,cones_halfsphere,kabluchko_thaele_conv}.

\begin{figure}[!ht]
\centering
\includegraphics[scale=0.28]{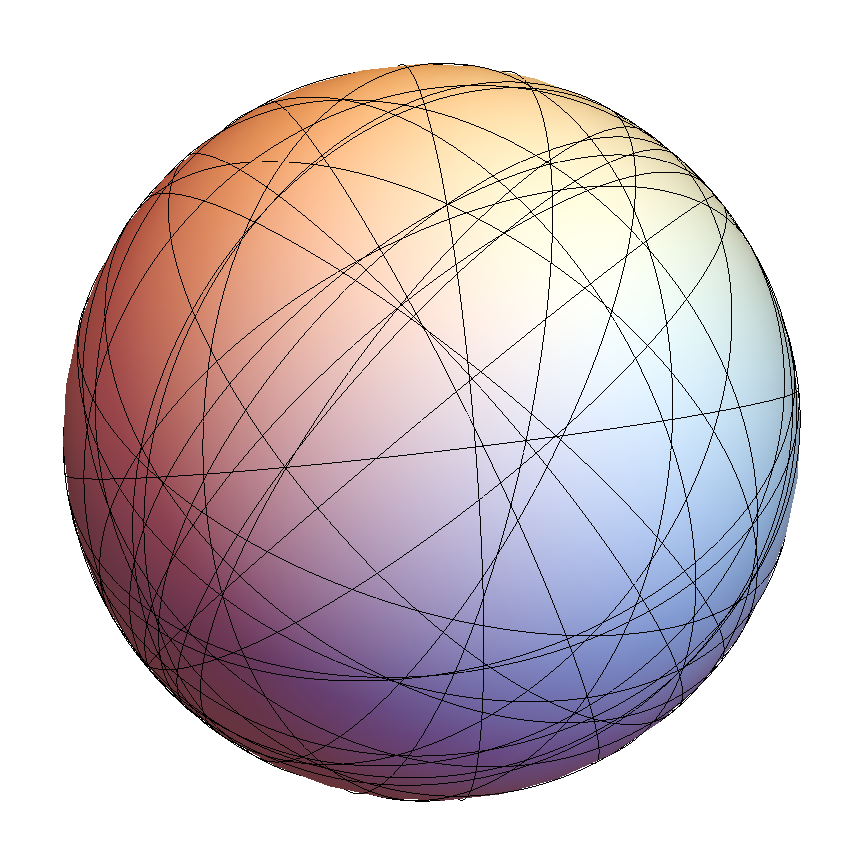}
\caption{Tessellation of the unit sphere in $\R^3$ induced by $n=36$ uniform and independent random hyperplanes.}\label{pic:1}
\end{figure}

Schl\"afli's formula, as well as related probabilistic results~\cite{Wendel_paper,CoverEfron_paper,HugSchneider2016},
are closely connected to the following question: How many orthants generated by the hyperplane arrangement consisting of the coordinate hyperplanes $e_1^\perp, \ldots, e_n^\perp$ in $\R^n$ are intersected by a linear subspace in general position? Here, $x^\perp=\{y\in\R^n:\langle x,y\rangle=0\}$ denotes the orthogonal complement of a vector $x\in\R^n\setminus\{0\}$, $\langle\cdot\,,\cdot\rangle$ denotes the standard Euclidean scalar product, while $e_1,\ldots,e_n$ is the standard orthonormal basis of $\R^n$. As it turned out~\cite{KVZ15,KVZ17,Kabluchko2019},
there are meaningful probabilistic problems (including, for example, the classical Sparre Andersen theorems on random walks)
that  are equivalent to a similar question for the so-called reflection arrangements of types $A_{n-1}$ and $B_{n}$, which are defined as
\begin{align}
\cA(A_{n-1})
&= \{(e_i-e_j)^\perp:\quad 1\le i<j\le n\},\label{eq:refl_arr_type_A}\\
\cA(B_n)
&=
\{
(e_i+e_j)^\perp,
(e_i-e_j)^\perp: 1\le i<j\le n\}
\cup
\{e_i^\perp: 1\le i\le n\}. \label{eq:refl_arr_type_B}
\end{align}

In this paper we want to introduce two new classes of conical tessellations that are related to these hyperplane arrangements
and the corresponding Weyl chambers. Let us start with tessellations of type $A_{n-1}$.

\subsection{Weyl tessellations of type \texorpdfstring{$\boldsymbol{A_{n-1}}$}{A\_n-1}: Number of cones and faces}\label{sec:weyl_tess_A}
Take some vectors $y_1,\ldots,y_n\in\R^d$.  By definition, the hyperplane arrangement $\A^A(y_1,\ldots,y_n)$ consists of the hyperplanes given by
\begin{align}\label{eq:A_arr_def}
(y_i-y_j)^\perp,\quad 1\le i<j\le n.
\end{align}
The \textit{Weyl tessellation of type $A_{n-1}$}, denoted by $\mathcal{W}^A(y_1,\ldots,y_n)$,  is defined as the conical tessellation generated by the hyperplane arrangement $\A^A(y_1,\ldots,y_n)$. An example is shown on the left panel of Figure~\ref{pic:2}.
When considering Weyl tessellations of type $A_{n-1}$, we always assume that the following condition holds:
\begin{enumerate}[label=(A\arabic*), leftmargin=50pt]
\item  For every permutation $\sigma$ of the set $\{1,\ldots,n\}$, any $d$ of the vectors $y_{\sigma(1)}-y_{\sigma(2)},\ldots,y_{\sigma(n-1)}-y_{\sigma(n)}$ are linearly independent, and $n\geq d+1$.
    \label{label_GP1_A}
\end{enumerate}
It is easy to see that this condition guarantees that the hyperplanes~\eqref{eq:A_arr_def} are well defined and pairwise distinct for $d\geq 2$, which we always assume in the following. Moreover, under~\ref{label_GP1_A}, the elements of $\mathcal{W}^A(y_1,\ldots,y_n)$ are exactly the cones of the form
$$
D_\sigma^A := \{v\in\R^d:\langle v, y_{\sigma(1)}\rangle\le \ldots\le \langle v,y_{\sigma(n)}\rangle\}
$$
that are not equal to $\{0\}$, where $\sigma$ runs over $\mathcal S_n$, the set of all permutations of $\{1,\ldots,n\}$. This is an easy consequence of~\cite[Eq.~(14)]{HugSchneider2016}.
In the next theorem we evaluate the number of cones in the Weyl tessellation of type $A_{n-1}$.

\begin{satz}\label{theorem:number_cones_A}
Let $y_1,\ldots,y_n\in\R^d$ satisfy the assumption~\ref{label_GP1_A}.
Then the number of cones in the Weyl tessellation $\mathcal{W}^A(y_1,\ldots,y_n)$ of type $A_{n-1}$ equals
\begin{align*}
D^A(n,d):=2\bigg(\stirling{n}{n-d+1}+\stirling{n}{n-d+3}+\ldots\bigg),
\end{align*}
where the $\stirling{n}{k}$'s are the Stirling numbers of first kind defined by the formula
\begin{align}\label{eq:def_stirling1}
t(t+1)\cdot\ldots\cdot(t+n-1)=\sum_{k=1}^n\stirling{n}{k}t^k
\end{align}
and, by convention, $\stirling{n}{k}=0$ for $k\notin\{1,\ldots, n\}$.
\end{satz}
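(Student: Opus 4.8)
The plan is to realise the Weyl tessellation $\mathcal W^A(y_1,\dots,y_n)$ as the trace of the braid (type $A_{n-1}$) reflection arrangement on a generic $d$-dimensional subspace, and then to count the resulting regions via Zaslavsky's theorem together with the intersection lattice of the braid arrangement. Consider the linear map $\Phi\colon\R^d\to\R^n$, $\Phi(v)=(\langle v,y_1\rangle,\dots,\langle v,y_n\rangle)$, and let $\bL=\Phi(\R^d)$. Since the differences $y_i-y_j$ span $\R^d$ under \ref{label_GP1_A}, the map $\Phi$ is injective and $\dim\bL=d$; moreover $\mathbf 1=(1,\dots,1)\notin\bL$, because $\langle v,y_i\rangle\equiv 1$ would force $v\perp\Span\{y_i-y_j\}=\R^d$, i.e.\ $v=0$. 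As $(y_i-y_j)^\perp=\Phi^{-1}(\{x_i=x_j\})$, the map $\Phi$ identifies $\cA^A(y_1,\dots,y_n)$ with the restriction $\cA(A_{n-1})|_{\bL}$ of the braid arrangement to $\bL\cong\R^d$. Under this identification a cone $D_\sigma^A$ is full-dimensional precisely when the interior of the Weyl chamber $C_\sigma=\{x:x_{\sigma(1)}\le\dots\le x_{\sigma(n)}\}$ meets $\bL$, so that the number $D^A(n,d)$ we seek equals the number of regions of the central arrangement $\cB:=\cA(A_{n-1})|_{\bL}$ in $\R^d$.

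By Zaslavsky's theorem this number is $(-1)^d\chi_{\cB}(-1)$, where $\chi_{\cB}$ is the characteristic polynomial of the intersection lattice of $\cB$. The heart of the argument is to describe this lattice as a truncation of the partition lattice $\Pi_n$, which is the intersection lattice of $\cA(A_{n-1})$. For a set partition $\pi$ with $b$ blocks the corresponding braid flat $F_\pi$ has codimension $n-b$, and $F_\pi\cap\bL=(\Span\{y_i-y_j:\ i\sim_\pi j\})^\perp$. Assumption \ref{label_GP1_A} guarantees that any at most $d$ of the within-block differences are linearly independent (they form a sub-collection of the consecutive differences along a suitable permutation), whence $\dim(F_\pi\cap\bL)=\max(0,\,d-(n-b))$. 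Consequently the flats of $\cB$ of positive dimension are in Möbius-preserving bijection with the partitions $\pi$ of codimension $n-b\le d-1$, while all braid flats of codimension $\ge d$ collapse to the single centre $\{0\}$. Writing $\chi_{\cA(A_{n-1})}(t)=t(t-1)\cdots(t-n+1)=\sum_{b=1}^n s(n,b)\,t^b$ with the signed Stirling numbers $s(n,b)=(-1)^{n-b}\stirling{n}{b}$, the surviving flats contribute $\sum_{b\ge n-d+1}s(n,b)\,t^{\,d-n+b}$, and the Möbius value $\mu_0$ of the centre is fixed by $\chi_{\cB}(1)=0$ to be $\mu_0=-\sum_{b\ge n-d+1}s(n,b)$.

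It then remains to evaluate $(-1)^d\chi_{\cB}(-1)$. Substituting $j=d-n+b$ and using that the contribution of $\mu_0$ cancels the even-$j$ terms, one finds
\begin{align*}
\chi_{\cB}(-1)=\sum_{j=1}^{d}s(n,j+n-d)\big((-1)^{j}-1\big)=-2\!\!\sum_{\substack{1\le j\le d\\ j\ \mathrm{odd}}}\!\! s(n,j+n-d).
\end{align*}
For odd $j$ one has $s(n,j+n-d)=-(-1)^{d}\stirling{n}{j+n-d}$, so that
\begin{align*}
(-1)^d\chi_{\cB}(-1)=2\!\!\sum_{\substack{1\le j\le d\\ j\ \mathrm{odd}}}\!\!\stirling{n}{j+n-d}=2\Big(\stirling{n}{n-d+1}+\stirling{n}{n-d+3}+\cdots\Big),
\end{align*}
which is exactly $D^A(n,d)$; terms whose index exceeds $n$ vanish by the stated convention on $\stirling{n}{k}$.

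The main obstacle is the combinatorial transversality in the second step: one must check that \ref{label_GP1_A} is precisely the genericity needed for $\bL$ to meet every braid flat in the expected dimension and to induce a Möbius-preserving isomorphism onto the truncated partition lattice (in particular that distinct low-codimension partitions yield distinct flats of $\cB$ and that the centre of $\cB$ is exactly $\{0\}$). Once this is established, the count is a direct application of Zaslavsky's theorem followed by a routine Stirling-number manipulation. This is also the concrete form taken here by the general principle announced in the introduction: the cones of the tessellation correspond to the faces---here the chambers---of the type-$A_{n-1}$ Weyl arrangement that are met non-trivially by a linear subspace in general position.
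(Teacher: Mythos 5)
Your proof is correct, and it takes a genuinely different route from the paper. The paper deduces Theorem~\ref{theorem:number_cones_A} as the case $k=d$ of Proposition~\ref{prop:faces_with_multiplicity_A}, whose proof runs through the face characterization (Proposition~\ref{Prop_Char_Weyl_Faces_An-1}), a duality/Farkas-type argument (Lemma~\ref{Lem_Connection_Faces_Chamber_An-1}) showing that $D^A_\sigma=\{0\}$ iff the Weyl chamber $C^A_\sigma$ meets $L^\perp$ trivially, and finally an imported count of chambers of $\cA(A_{n-1})$ hit by a generic subspace (Theorem~\ref{Theorem_Weyl_Faces_Intersected_Type_An-1}, from~\cite{Kabluchko2019}). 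Your subspace $\mathbb{L}=\Phi(\R^d)$ is precisely the paper's $L^\perp$, so the geometric reduction is the same; the difference is that you then compute the region count of the section $\cA(A_{n-1})|_{\mathbb{L}}$ yourself via Zaslavsky's theorem and the truncation of the characteristic polynomial $t(t-1)\cdots(t-n+1)$, rather than citing the external result (the paper explicitly states it avoids characteristic polynomials and relies on \cite{KVZ15,KVZ17,Kabluchko2019} instead). Your treatment of the one genuine issue — that \ref{label_GP1_A} forces $\dim(F_\pi\cap\mathbb{L})=\max(0,d-(n-b))$ because the within-block differences of a partition are a sub-collection of consecutive differences along a suitable permutation — is exactly the content of the paper's equivalence \ref{label_GP1_A}$\Leftrightarrow$\ref{label_GP2_A} (Theorem~\ref{Theorem_Aequivalenz_A1_A2}), and the injectivity and M\"obius-preservation of $F_\pi\mapsto F_\pi\cap\mathbb{L}$ on flats of codimension at most $d-1$ do follow from that dimension formula by standard arguments, as does the determination of $\mu_0$ from $\chi_{\cB}(1)=0$ for a central arrangement; the concluding Stirling manipulation is correct. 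What each approach buys: yours is shorter and self-contained (modulo Zaslavsky) for the chamber count, and makes the "generic section of the reflection arrangement" picture completely explicit; the paper's heavier machinery pays off because it yields, uniformly, the counts of all $k$-faces with multiplicities and the $B_n$ analogues, which your lattice-truncation argument would only recover after additional Whitney-number computations on the truncated partition lattice.
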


The first thing to note is that the number of cones in the Weyl tessellation, like in the Schl\"afli case, does not depend on the choice of vectors $y_1,\ldots,y_n$, provided~\ref{label_GP1_A} holds. In Lemma~\ref{Lemma_AS_General_Position}
we will show that, in certain natural random settings, this condition is satisfied with probability $1$.
Moreover, in Theorem~\ref{Theorem_Aequivalenz_A1_A2} we will state an equivalent assumption,
called~\ref{label_GP2_A}, which allows to view~\ref{label_GP1_A} from a larger perspective of general position with respect to hyperplane arrangements.


For a polyhedral cone $C\subset \R^d$ and for
$k =0,\ldots, d$, denote by $\F_k(C)$ the set of all $k$-dimensional faces (or just $k$-faces) of $C$, and let $f_k(C):= \#\F_k(C)$ be their number.  The set of all $k$-dimensional faces  of the conic tessellation $\mathcal{W}^A(y_1,\ldots,y_n)$ is denoted by
\begin{align*}
\F^A_k(y_1,\ldots,y_n)=\bigcup_{C\in\mathcal{W}^A(y_1,\ldots,y_n)}\F_k(C).
\end{align*}
The next theorem states an explicit formula for the total number of $k$-faces in the Weyl tessellation of type $A_{n-1}$
and reduces to Theorem~\ref{theorem:number_cones_A} in the special case $k=d$.

\begin{satz}\label{Theorem_Number_WeylFaces_Tessellation_An-1}
Let $y_1,\ldots,y_n\in\R^d$ satisfy assumption~\ref{label_GP1_A}. Then the number of $k$-faces in the Weyl tessellation of type $A_{n-1}$ is given by
\begin{align*}
\#\F_k^A(y_1,\ldots,y_n)=\stirlingsec{n}{n-d+k}D^A(n-d+k,k),
\end{align*}
for all $k = 1,\ldots, d$. Here, $\stirlingsec{n}{k}$ is the Stirling number of the second kind, that is, the number of partitions of the set $\{1,\ldots,n\}$  into $k$ non-empty subsets.
\end{satz}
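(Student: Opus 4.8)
The plan is to realize the faces of $\mathcal{W}^A(y_1,\ldots,y_n)$ as the faces of the type $A_{n-1}$ reflection arrangement $\cA(A_{n-1})$ in $\R^n$ that are met by a suitable $d$-dimensional subspace, and then to reorganize the count according to the combinatorial type of each such face. Consider $\phi\colon\R^d\to\R^n$, $\phi(v)=(\langle v,y_1\rangle,\ldots,\langle v,y_n\rangle)$. Assumption~\ref{label_GP1_A} forces the $y_i$ to span $\R^d$, so $\phi$ is injective, $L:=\phi(\R^d)$ is $d$-dimensional, and $\phi$ carries each tessellation hyperplane $(y_i-y_j)^\perp$ to $L\cap\{x_i=x_j\}$. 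Hence $\phi$ is an isomorphism taking the relatively open faces of $\mathcal{W}^A$ onto the sets $L\cap\relint F$, where $F$ ranges over the faces of $\cA(A_{n-1})$ with $L\cap\relint F\neq\emptyset$. Since the relatively open $m$-dimensional faces of $\cA(A_{n-1})$ are indexed by ordered set partitions $(B_1,\ldots,B_m)$ of $\{1,\ldots,n\}$ (with $\relint F$ the set of $x$ constant on each block and strictly increasing across blocks), counting $k$-faces of the tessellation amounts to counting the ordered set partitions into $m=n-d+k$ blocks for which
\[
\exists\, v\in\R^d:\quad \langle v,y_i-y_j\rangle=0 \ (i,j\text{ in a common block}),\quad \langle v,y_{i_1}\rangle<\cdots<\langle v,y_{i_m}\rangle,
\]
where $i_a\in B_a$ is any representative (the values being well defined by the equality constraints). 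The dimension bookkeeping, that such a face has dimension exactly $k=m-(n-d)$, will follow from the computation $\dim U_\pi=k$ below.

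Next I fix the underlying unordered partition $\pi=\{B_1,\ldots,B_m\}$ and treat all of its orderings at once. The equality constraints cut out the subspace $U_\pi=\big(\lin\{y_i-y_j:\ i,j\text{ in a common block}\}\big)^\perp\subseteq\R^d$, which depends only on $\pi$. I will show $\dim U_\pi=k$: ordering the elements so that each block is an interval, the within-block consecutive differences number $n-m=d-k$ and, being at most $d$ of the consecutive differences $y_{\sigma(1)}-y_{\sigma(2)},\ldots,y_{\sigma(n-1)}-y_{\sigma(n)}$, are linearly independent by~\ref{label_GP1_A}; as they span $\lin\{y_i-y_j:\ i,j\text{ in a common block}\}$, this span has dimension $d-k$ and $\dim U_\pi=k$. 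For $v\in U_\pi$ the quantities $\langle v,y_{i_a}\rangle=\langle v,\bar y_a\rangle$ depend only on the projections $\bar y_a:=\mathrm{proj}_{U_\pi}(y_{i_a})$, and the orderings $\tau$ of the blocks for which the strict chain $\langle v,\bar y_{\tau(1)}\rangle<\cdots<\langle v,\bar y_{\tau(m)}\rangle$ is solvable in $U_\pi$ are exactly those indexing the full-dimensional cones of the Weyl tessellation of type $A_{m-1}$ generated by $\bar y_1,\ldots,\bar y_m$ in $U_\pi\cong\R^k$. Provided these vectors satisfy~\ref{label_GP1_A} in dimension $k$, Theorem~\ref{theorem:number_cones_A} gives exactly $D^A(m,k)$ such orderings for each $\pi$.

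The main obstacle is this last general-position verification for $\bar y_1,\ldots,\bar y_m$. The bound $m=n-d+k\geq k+1$ follows from $n\geq d+1$, so it remains to show that for every ordering $\rho$ of the blocks any $k$ of the consecutive differences $\bar y_{\rho(a)}-\bar y_{\rho(a+1)}$ are linearly independent in $U_\pi$. Since $\mathrm{proj}_{U_\pi}$ annihilates $\lin\{y_i-y_j:\ i,j\text{ in a common block}\}$, independence of $k$ such projected differences is equivalent to independence in $\R^d$ of the corresponding differences $y_p-y_q$ (with $p\in B_{\rho(a)}$, $q\in B_{\rho(a+1)}$) together with a spanning set of $d-k$ within-block differences. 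The key point, using that the projected between-block difference is independent of the chosen representatives, is to arrange $\{1,\ldots,n\}$ into a single permutation $\sigma$ whose blocks appear in the order $\rho$ and to read off both the chosen between-block differences and the $d-k$ within-block differences as consecutive differences of this one $\sigma$; their total number is $d$, so~\ref{label_GP1_A} yields their linear independence at once. With this established, summing the per-$\pi$ count over all $\stirlingsec{n}{n-d+k}$ unordered partitions into $m=n-d+k$ blocks, and noting that distinct ordered set partitions give distinct tessellation faces, yields
\[
\#\F_k^A(y_1,\ldots,y_n)=\stirlingsec{n}{n-d+k}\,D^A(n-d+k,k),
\]
as claimed; the case $k=d$ recovers Theorem~\ref{theorem:number_cones_A}, since then $m=n$ and $\stirlingsec{n}{n}=1$.
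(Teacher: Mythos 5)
Your proof is correct and follows essentially the same route as the paper's: you group the $k$-faces by the unordered partition determining their linear span (giving the factor $\stirlingsec{n}{n-d+k}$), identify the faces within each resulting $k$-dimensional subspace with the cones of a type $A_{m-1}$ Weyl tessellation generated by the projected vectors, verify assumption~\ref{label_GP1_A} for these projections, and apply Theorem~\ref{theorem:number_cones_A}. The only difference is cosmetic: you phrase the initial reduction via the embedding $v\mapsto(\langle v,y_1\rangle,\ldots,\langle v,y_n\rangle)$ and the reflection arrangement $\cA(A_{n-1})$ in $\R^n$, whereas the paper works directly with the subspaces $L(l,\sigma)\subset\R^d$ via Proposition~\ref{Prop_Char_Weyl_Faces_An-1}.
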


\subsection{Weyl tessellations of type \texorpdfstring{$\boldsymbol{B_n}$}{B\_n}: Number of cones and faces}\label{sec:weyl_tess_B}
We are now going to define conical tessellations of type $B_{n}$.
Take vectors $y_1,\ldots,y_n\in\R^d$.
By definition, the hyperplane arrangement $\mathcal{A}^B(y_1,\ldots,y_n)$ is the finite collection of hyperplanes in $\R^d$ given by
\begin{align}\label{eq:Weyl_arrangement_B}
(y_i+y_j)^\perp,\quad &1 \le i<j\le n,\nonumber\\
(y_i-y_j)^\perp,\quad &1\le i<j\le n,\\
y_i^\perp,\quad &1\le i\le n.\nonumber
\end{align}
Then, the \textit{Weyl tessellation of type $B_n$}, denoted by $\mathcal{W}^B(y_1,\ldots,y_n)$, is defined as the conical tessellation generated by the hyperplanes from $\mathcal{A}^B(y_1,\ldots,y_n)$. An example is shown on the right panel of Figure~\ref{pic:2}. When considering Weyl tessellations of type $B_n$ we always impose the following condition on the vectors $y_1,\ldots,y_n$:
\begin{enumerate}[label=(B\arabic*), leftmargin=50pt]
\item For every $\eps=(\eps_1,\ldots,\eps_n)\in\{\pm 1\}^n$ and every permutation $\sigma$ of the set $\{1,\ldots,n\}$, any $d$ of the vectors $\varepsilon_1y_{\sigma(1)}-\varepsilon_2y_{\sigma(2)},\varepsilon_2y_{\sigma(2)}-\varepsilon_3y_{\sigma(3)},\ldots,\varepsilon_{n-1}y_{\sigma(n-1)}-\varepsilon_ny_{\sigma(n)}, \varepsilon_ny_{\sigma(n)}$ are linearly independent, and $n\geq d$.
    \label{label_GP1}
\end{enumerate}


\begin{figure}[!ht]
\centering
\includegraphics[scale=0.33]{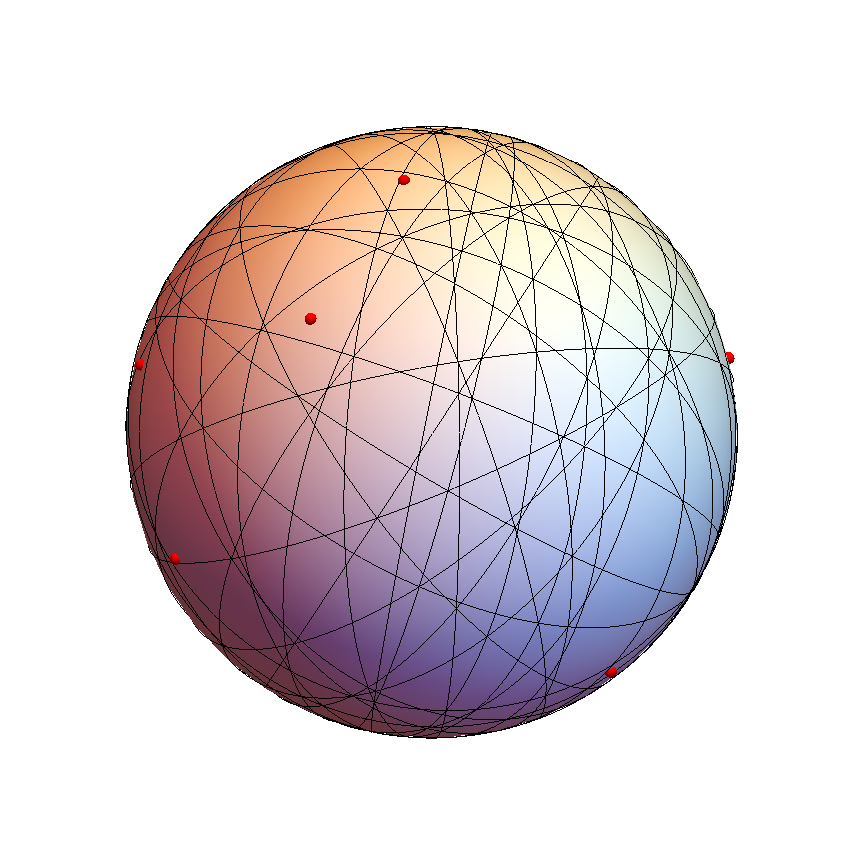}
\includegraphics[scale=0.33]{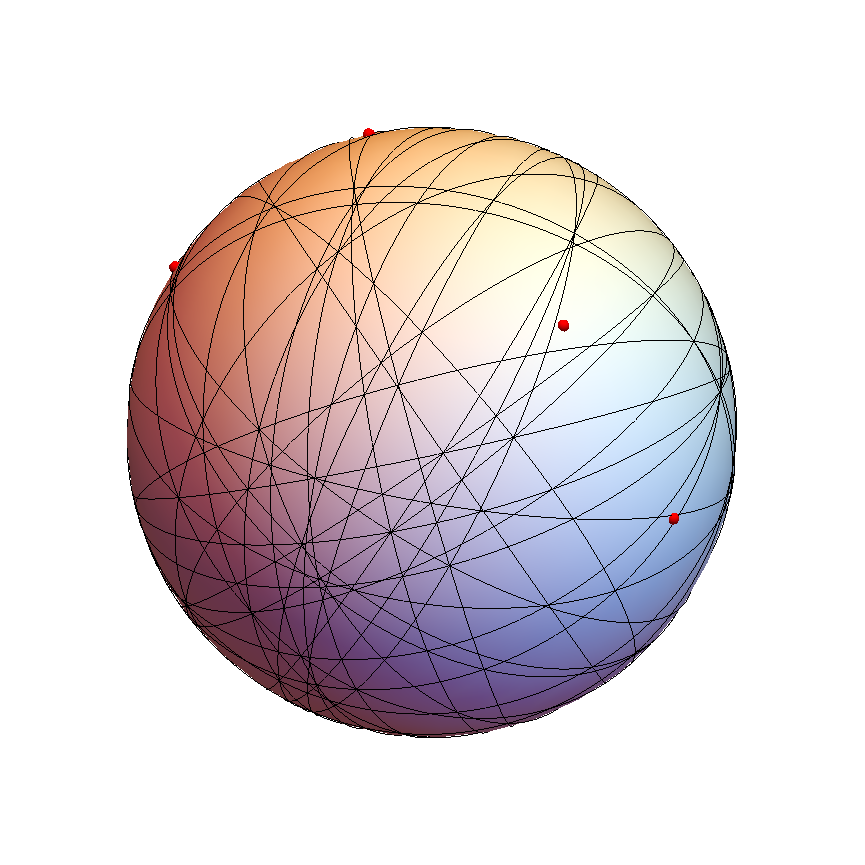}
\caption{Left: Weyl tessellation of type $A_{n-1}$ of the unit sphere in $\R^3$ with $n=9$.
Right: Weyl tessellation of type $B_n$ of the unit sphere in $\R^3$ with $n=6$.
Both  tessellations are generated by $36$ hyperplanes. The vectors $y_1,\ldots,y_n$ (red points) were sampled independently and uniformly on the unit sphere.
}\label{pic:2}
\end{figure}


It is easy to check that under~\ref{label_GP1} and for $d\geq 2$, which we tacitly assume in the following, the hyperplanes from
$\mathcal{A}^B(y_1,\ldots,y_n)$ are well-defined and pairwise distinct.
Moreover, under~\ref{label_GP1}, the elements of $\mathcal{W}^B(y_1,\ldots,y_n)$ are exactly those
 cones  of the form
\begin{align*}
D^B_{\varepsilon,\,\sigma}
:=\{v\in\R^d:\langle v,\varepsilon_1y_{\sigma(1)}\rangle\le \ldots\le \langle v,\varepsilon_ny_{\sigma(n)}\rangle\le 0\}
\end{align*}
that are different from $\{0\}$; see~\cite[Eq.~(14)]{HugSchneider2016}. Here,
$\varepsilon=(\varepsilon_1,\ldots,\varepsilon_n)\in\{\pm 1\}^n$ is a vector of signs and $\sigma\in \mathcal S_n$ runs through all permutations of $\{1,\ldots,n\}$.
\begin{satz}\label{Theorem_Number_Weyl_Cones}
Let $y_1,\ldots,y_n\in\R^d$  satisfy the assumption~\ref{label_GP1}. Then the number of cones in the Weyl tessellation $\mathcal{W}^B(y_1,\ldots,y_n)$ equals
\begin{align*}
D^B(n,d):=2\big(\stirlingb n{n-d+1}+\stirlingb n{n-d+3}+\ldots\big),
\end{align*}
where the numbers $B[n,k]$ are the coefficients of the polynomial
\begin{align}\label{eq:def_stirling1b}
(t+1)(t+3)\cdot\ldots\cdot(t+2n-1)=\sum_{k=0}^n\stirlingb nk t^k
\end{align}
and, by convention, $\stirlingb nk=0$ for $k\notin\{0,\ldots, n\}$.
\end{satz}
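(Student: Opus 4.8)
The plan is to reduce the statement to a count of Weyl chambers of the reflection arrangement $\cA(B_n)$ from~\eqref{eq:refl_arr_type_B} that are met nontrivially by a generic linear subspace, and then to evaluate that count through the conic intrinsic volumes of these chambers. First I would set up the reduction via the linear map $L\colon\R^d\to\R^n$, $L(v)=(\langle v,y_1\rangle,\ldots,\langle v,y_n\rangle)$. Assumption~\ref{label_GP1} forces $L$ to be injective, so $\tilde L:=L(\R^d)$ is a $d$-dimensional subspace of $\R^n$, and one checks that~\ref{label_GP1} is exactly the requirement that $\tilde L$ be in general position with respect to $\cA(B_n)$. Writing $W_{\varepsilon,\sigma}=\{x\in\R^n:\varepsilon_1x_{\sigma(1)}\le\ldots\le\varepsilon_nx_{\sigma(n)}\le 0\}$ for the Weyl chamber of $\cA(B_n)$ indexed by $\varepsilon\in\{\pm1\}^n$ and $\sigma\in\mathcal{S}_n$ (there are $2^nn!$ of them), the substitution $x_i=\langle v,y_i\rangle$ gives $D^B_{\varepsilon,\sigma}=L^{-1}(W_{\varepsilon,\sigma})$, whence $D^B_{\varepsilon,\sigma}\neq\{0\}$ if and only if $W_{\varepsilon,\sigma}\cap\tilde L\neq\{0\}$. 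Combined with the characterization of the cones of $\mathcal{W}^B(y_1,\ldots,y_n)$ recalled before the theorem, this shows that $D^B(n,d)$ equals the number of Weyl chambers of $\cA(B_n)$ met nontrivially by $\tilde L$.

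Next I would observe that this number is a combinatorial invariant of the arrangement: it takes the same value for every $d$-dimensional subspace in general position with respect to $\cA(B_n)$, which both explains why $D^B(n,d)$ is distribution-free and lets us compute it as the expected number of chambers met by a uniformly random $d$-subspace $L_d$ (which is a.s.\ in general position). Since $\cA(B_n)$ is essential, every chamber $W_{\varepsilon,\sigma}$ is a pointed cone, so the conic Crofton (Grassmann angle) formula applies: for a pointed cone $C\subseteq\R^n$ with conic intrinsic volumes $v_0,\ldots,v_n$,
\[
\P[C\cap L_d\neq\{0\}]=2\big(v_{n-d+1}(C)+v_{n-d+3}(C)+\cdots\big).
\]
Summing over all chambers then yields
\[
D^B(n,d)=2\sum_{\varepsilon,\sigma}\big(v_{n-d+1}(W_{\varepsilon,\sigma})+v_{n-d+3}(W_{\varepsilon,\sigma})+\cdots\big).
\]

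The final step is to evaluate the total intrinsic volumes $V_k:=\sum_{\varepsilon,\sigma}v_k(W_{\varepsilon,\sigma})$. By the theorem of Klivans and Swartz relating the summed projection (intrinsic) volumes of a central arrangement to its characteristic polynomial, $V_k$ equals the absolute value of the coefficient of $t^k$ in $\chi_{B_n}(t)$. The characteristic polynomial of the type-$B_n$ reflection arrangement is $\chi_{B_n}(t)=(t-1)(t-3)\cdots(t-(2n-1))$, whose coefficients are, up to sign, precisely the numbers $\stirlingb nk$ of~\eqref{eq:def_stirling1b}; hence $V_k=\stirlingb nk$. As a consistency check, $\sum_k V_k=\sum_{\varepsilon,\sigma}1=2^nn!$, which matches the value of~\eqref{eq:def_stirling1b} at $t=1$. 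Substituting $V_k=\stirlingb nk$ into the previous display gives exactly $D^B(n,d)=2\big(\stirlingb n{n-d+1}+\stirlingb n{n-d+3}+\cdots\big)$, proving the theorem. This runs in complete parallel with the computation behind Theorem~\ref{theorem:number_cones_A}, where the braid arrangement has $\chi_{A_{n-1}}(t)=t(t-1)\cdots(t-(n-1))$ and correspondingly $V_k=\stirling nk$.

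The main obstacle lies in the two inputs feeding the count rather than in the reduction itself. One must verify that~\ref{label_GP1} is genuinely equivalent to general position of $\tilde L$ with respect to $\cA(B_n)$, so that the chamber count is the generic one (this is the $B$-analogue of the equivalence $\ref{label_GP1_A}\Leftrightarrow\ref{label_GP2_A}$), and one must establish $V_k=\stirlingb nk$, i.e.\ the Klivans--Swartz link between the total conic intrinsic volumes of the $B_n$-chambers and the characteristic polynomial $(t-1)(t-3)\cdots(t-(2n-1))$. By contrast, the preimage identity $D^B_{\varepsilon,\sigma}=L^{-1}(W_{\varepsilon,\sigma})$ and the application of the Grassmann angle formula are routine once the setup is in place.
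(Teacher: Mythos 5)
Your reduction is sound and arrives at the right formula, but it travels a different road from the paper's. The paper obtains the theorem as the case $k=d$ of Proposition~\ref{prop:faces_with_multiplicity_B}: it proves the equivalence $D^B_{\eps,\sigma}=\{0\}\Leftrightarrow C^B_{\eps,\sigma}\cap L^\perp=\{0\}$ (Lemma~\ref{Lem_Connection_Face_WeylFace}) by a duality/Farkas argument valid for faces of every dimension, and then quotes the deterministic count of Weyl-chamber faces met by a subspace in general position from~\cite{Kabluchko2019} (Theorem~\ref{Theorem_Weyl_Faces_Intersected}). You instead note that $D^B_{\eps,\sigma}=L^{-1}(W_{\eps,\sigma})$ for the injective map $L(v)=(\langle v,y_1\rangle,\ldots,\langle v,y_n\rangle)$, which makes the equivalence with $W_{\eps,\sigma}\cap\tilde L\neq\{0\}$ immediate and is genuinely simpler than Lemma~\ref{Lem_Connection_Face_WeylFace} at the level of full-dimensional cones; you then evaluate the chamber count by averaging the conic Crofton formula and invoking Klivans--Swartz together with $\chi_{B_n}(t)=(t-1)(t-3)\cdots(t-(2n-1))$. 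That is a legitimate and arguably more conceptual alternative to citing~\cite{Kabluchko2019}; what the paper's route buys is a count of faces of every dimension $k$, which is needed later for Theorems~\ref{Theorem_Number_WeylFaces_Tessellation_Bn} and~\ref{Theorem_sizefunction_Dn}, whereas your Crofton argument as written only counts chambers.

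Two points need shoring up before the argument is complete. First, the averaging step gives the count for $\nu_d$-almost every subspace, while you need it for the one specific subspace $\tilde L$ determined by $y_1,\ldots,y_n$; the bridge is your asserted, but unproved, claim that the count is the same for \emph{every} $d$-subspace in general position with respect to $\cA(B_n)$. This is true (for instance by Zaslavsky's theorem applied to the induced arrangement $\cA(B_n)|\tilde L$, whose intersection lattice is the generic truncation of that of $\cA(B_n)$, or by the deterministic statement in~\cite{Kabluchko2019}), but it is precisely the nontrivial combinatorial content of the step, so it must be argued rather than asserted. Second, you count pairs $(\eps,\sigma)$ with $W_{\eps,\sigma}\cap\tilde L\neq\{0\}$, whereas the theorem counts distinct cones; one should check that $(\eps,\sigma)\mapsto D^B_{\eps,\sigma}$ is injective on nonzero cones, e.g.\ because any interior point $v$ of such a cone has all $2n$ values $\pm\langle v,y_i\rangle$ distinct and nonzero, so it determines $(\eps,\sigma)$ — the paper handles this via Propositions~\ref{Prop_WeylKegel_die_Seiten_enthalten_B_n} and~\ref{Prop uniq}. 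The remaining input, the equivalence of~\ref{label_GP1} with general position of $\tilde L$, you correctly flag as needing proof; it is exactly the paper's Theorem~\ref{Theorem_Aequivalenz_B1_B2}.
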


Similarly to the $A_{n-1}$-case, we denote the set of $k$-faces of $\mathcal{W}^B(y_1,\ldots,y_n)$
by $\cF^B_k(y_1,\ldots,y_n)$.  The next result generalizes Theorem~\ref{Theorem_Number_Weyl_Cones}.

\begin{satz}\label{Theorem_Number_WeylFaces_Tessellation_Bn}
Let $y_1,\ldots,y_n\in\R^d$ satisfy assumption~\ref{label_GP1}. Then the number of $k$-faces in the Weyl tessellation of type $B_n$ is given by
\begin{align*}
\#\F_k^B(y_1,\ldots,y_n)&=\stirlingsecb n{n-d+k}D^B(n-d+k,k)
\end{align*}
for all $k = 1,\ldots,d$, where the $\stirlingsecb n{k}$'s are given by
\begin{align}\label{eq:def_stirling2b}
\stirlingsecb n{k} = \sum_{r=k}^{n}\binom{n}{r}\stirlingsec{r}{k}2^{r-k}.
\end{align}
\end{satz}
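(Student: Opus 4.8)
The plan is to mirror the proof of the type-$A$ face count (Theorem~\ref{Theorem_Number_WeylFaces_Tessellation_An-1}) and to reduce everything to the cone count of Theorem~\ref{Theorem_Number_Weyl_Cones}. Write $Y\colon\R^d\to\R^n$, $Y(v)=(\langle v,y_1\rangle,\ldots,\langle v,y_n\rangle)$; by~\ref{label_GP1} the vectors $y_1,\ldots,y_n$ span $\R^d$, so $Y$ is a linear isomorphism onto a $d$-dimensional subspace $L\subseteq\R^n$, and it carries the hyperplanes of $\mathcal A^B(y_1,\ldots,y_n)$ onto the traces $H\cap L$ of the reflection arrangement $\cA(B_n)$. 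Moreover, by~\ref{label_GP1} the subspace $L$ is in general position with respect to $\cA(B_n)$. Consequently the $k$-faces of $\mathcal W^B(y_1,\ldots,y_n)$ correspond bijectively, under $Y$, to the faces $F$ of $\cA(B_n)$ whose relative interior meets $L$ and which, by general position, satisfy $\dim F=n-d+k=:m$. Thus the task becomes: count the $m$-faces of $\cA(B_n)$ crossed by the generic subspace $L$. I would isolate this correspondence as a lemma, exactly as in the $A_{n-1}$-case (this is the ``main ingredient'' alluded to in the abstract).

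Next I would record the combinatorial description of the faces of $\cA(B_n)$. A $p$-dimensional face is encoded by a \emph{zero block} $Z\subseteq\{1,\ldots,n\}$, an ordered partition $(B_1,\ldots,B_p)$ of $\{1,\ldots,n\}\setminus Z$, and a sign vector $(\eta_i)_{i\notin Z}\in\{\pm1\}^{\,\{1,\ldots,n\}\setminus Z}$; the associated face is $\{x:x_i=0\ (i\in Z),\ \eta_ix_i=\eta_jx_j\ (i,j\in B_l),\ 0<a_1<\cdots<a_p\}$, where $a_l$ denotes the common value $\eta_ix_i$ for $i\in B_l$, and its dimension equals the number $p$ of nonzero blocks. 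I would then group the $m$-faces according to the \emph{unordered} data consisting of $Z$, the unordered partition of $\{1,\ldots,n\}\setminus Z$ into $m$ blocks, and the within-block relative signs taken modulo a simultaneous flip of each block. Writing $r=n-|Z|$ for the number of nonzero indices, the number of such classes is $\sum_{r=m}^n\binom nr\stirlingsec rm 2^{r-m}=\stirlingsecb nm$, the factor $\binom nr$ choosing $Z$, the factor $\stirlingsec rm$ the unordered partition, and $2^{r-m}$ the relative signs modulo block flips.

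The core step is to show that each class contributes exactly $D^B(m,k)$ crossed faces. Fix a class and set $U=\{v\in\R^d:\langle v,y_i\rangle=0\ (i\in Z),\ \langle v,\eta_iy_i-\eta_jy_j\rangle=0\ (i,j\in B_l)\}$; by~\ref{label_GP1} the $n-m$ defining equations are independent, so $\dim U=d-(n-m)=k$. On $U$ the functionals $a_l(v)=\langle v,\eta_iy_i\rangle$ ($i\in B_l$) are well defined, and setting $z_l:=\pi_U(\eta_iy_i)$ (orthogonal projection onto $U$) we have $a_l(v)=\langle v,z_l\rangle$. The remaining freedom in the class, namely the overall sign $\varepsilon\in\{\pm1\}^m$ of each block together with the ordering $\sigma\in\mathcal S_m$, produces a genuine $m$-face whose relative interior meets $L$ precisely when the cone $\{v\in U:\langle v,\varepsilon_1z_{\sigma(1)}\rangle\le\cdots\le\langle v,\varepsilon_mz_{\sigma(m)}\rangle\le0\}$ is full-dimensional. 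These are exactly the cones $D^B_{\varepsilon,\sigma}$ of the Weyl tessellation of type $B_m$ generated by $z_1,\ldots,z_m$ in $U\cong\R^k$, so by Theorem~\ref{Theorem_Number_Weyl_Cones} there are $D^B(m,k)$ of them. Summing over the $\stirlingsecb nm$ classes yields $\#\F_k^B=\stirlingsecb nm\,D^B(m,k)=\stirlingsecb n{n-d+k}D^B(n-d+k,k)$.

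I expect the main obstacle to be the general-position bookkeeping: I must verify that~\ref{label_GP1} for $y_1,\ldots,y_n$ in $\R^d$ descends to the analogous condition for the collapsed vectors $z_1,\ldots,z_m$ in $U\cong\R^k$, so that Theorem~\ref{Theorem_Number_Weyl_Cones} applies uniformly across all classes and the contribution is the same $D^B(m,k)$ for each; the point is that the relevant functionals $y_i$ $(i\in Z)$ and $\eta_iy_i-\eta_jy_j$ are a sub-collection of a signed difference chain of the form appearing in~\ref{label_GP1}. A secondary point requiring care is matching the sign and order conventions so that the crossed faces correspond exactly to the chambers $D^B_{\varepsilon,\sigma}$ (rather than to a differently normalized type-$B_m$ arrangement), together with checking that the classes genuinely partition the $m$-faces, so that the $2^m m!$ faces per class and the $\stirlingsecb nm$ classes recover the total face count without overlap. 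As a consistency check, $k=d$ forces $m=n$, $\stirlingsecb nn=1$, and recovers $\#\F_d^B=D^B(n,d)$ of Theorem~\ref{Theorem_Number_Weyl_Cones}.
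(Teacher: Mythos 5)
Your proposal is correct and follows essentially the same route as the paper: group the $k$-faces by the $k$-dimensional subspaces $L(l,\eps,\sigma)$ containing them (your ``classes''), count $\stirlingsecb{n}{n-d+k}$ such subspaces via the zero-block/unordered-partition/signs-modulo-block-flips decomposition, and show that each one carries an induced type-$B_{n-d+k}$ Weyl tessellation generated by the projected vectors, contributing $D^B(n-d+k,k)$ faces by Theorem~\ref{Theorem_Number_Weyl_Cones} once~\ref{label_GP1} is verified for the projections. The only cosmetic difference is your opening detour through the reflection arrangement $\cA(B_n)$ in $\R^n$; the paper stays in $\R^d$ and gets the same classification of faces directly from Proposition~\ref{Prop_Char_WeylFaces}, but the counting argument is identical.
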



The numbers $\stirlingb nk$ are known as the $B$-analogues of the Stirling numbers of the first
kind $\stirling{n}{k}$; see  entry A028338 in~\cite{sloane}. The numbers $\stirling{n}{k}$, respectively
 $\stirlingb{n}{k}$, appear in our formulas because they are (up to a sign) the
coefficients of the characteristic polynomials of the reflection arrangements of types $A_{n-1}$ and $B_{n}$ defined in~\eqref{eq:refl_arr_type_A} and~\eqref{eq:refl_arr_type_B}.
Our problems will be reduced
to certain questions on counting the number of faces and chambers of
reflection arrangements that are intersected by a linear subspace.
These questions can be answered in terms of the coefficients of the characteristic polynomials of the hyperplane arrangements, which can also be interpreted as Hilbert-Poincar\'e polynomials; see~\cite[Section~3.9]{humphreys_book}. We will directly rely on the corresponding results from~\cite{KVZ17} and ~\cite{KVZ15}, without using characteristic polynomials here.

Similarly, the numbers $\stirlingsecb nk$ are known~\cite{suter} as the $B$-analogues of the
Stirling numbers $\stirlingsec{n}{k}$ of the second kind; see Entry A039755 in~\cite{sloane}.
The  number of linear subspaces of dimension $k$
in the lattice generated by the reflection arrangement of type $A_{n-1}$, respectively, $B_n$,
is given by $\stirlingsec{n}{k}$, respectively $\stirlingsecb{n}{k}$.  On the other hand, the numbers
$\stirling{n}{k}$, respectively $\stirlingb{n}{k}$, count elements of the reflection groups of the corresponding type whose invariant subspace has dimension $k$.    Equivalently, they count permutations (respectively signed permutations) having exactly $k$ cycles
(respectively, exactly $k$ cycles with an even number of sign changes).
The $B$-analogues of the Stirling numbers of both kinds and some more general concepts appeared, for example, in~\cite{Amelunxen2017,bagno_biagioli_garber_some_identities,bagno_garber_balls,bala_stirling,dowling,drton_klivans,
henze_orakel,KVZ17,KVZ15,lang_stirling,suter}.



\subsection{Schl\"afli and Cover-Efron random cones}\label{sec:schlaefli_coverefron}
So far our results have been purely deterministic.
Hug and Schneider~\cite{HugSchneider2016} (who continued the work of Cover and Efron~\cite{CoverEfron_paper}) defined two natural families of \textit{random} cones as follows.  Let $X_1,\ldots,X_n$ be independent and identically distributed random vectors in $\R^d$  whose distribution is symmetric with respect to the origin and assigns probability $0$ to each linear hyperplane. The orthogonal complements of these vectors are the hyperplanes denoted by $H_1:= X_1^\bot,\ldots,H_n:=X_n^\bot$. Then, the \textit{Schl\"afli random cone} $S_n$ is the random cone obtained by picking uniformly at random one of the cones generated by the hyperplanes $H_1,\ldots,H_n$.  Furthermore, the \textit{Cover-Efron random cone} $C_n$ is defined as the positive hull of $X_1,\ldots,X_n$ in $\R^d$, that is
\begin{align*}
\pos \{X_1,\ldots,X_n\}=\left\{\sum_{i=1}^n\lambda_iX_i: \lambda_1,\ldots,\lambda_n\ge 0\right\},
\end{align*}
conditioned on the event that this positive hull is not equal to $\R^d$.
It has been shown in~\cite{HugSchneider2016}  that, in distribution, the Cover-Efron cone $C_n$ coincides with the dual cone of $S_n$, where the dual cone of a cone $C\subset \R^d$ is defined by
\begin{align*}
C^\circ:=\{v\in\R^d:\langle x,v\rangle\le 0\;\text{ for all } x\in C\}.
\end{align*}

Hug and Schneider~\cite{HugSchneider2016} evaluated expectations of a few geometric functionals   of $S_n$ and $C_n$.  We will recall their results on $S_n$ because the results on $C_n$ follow by duality.  The expected number of $j$-faces of $S_n$ is given by
\begin{align*}
\E f_j(S_n)=\frac{2^{d-j}\binom{n}{d-j}C(n-d+j,j)}{C(n,d)},
\end{align*}
for all $j = 1,\ldots,d$.
Moreover, Hug and Schneider~\cite{HugSchneider2016} generalized this result  by introducing a series of general geometric functionals $Y_{k,j}$, called the \textit{size functionals}. In order to define them, we need to recall the definition of the conical quermassintegrals.  For a cone $C\subseteq \R^d$ that is not a linear subspace (which is the only case we will face in this paper) the \textit{conical quermassintegrals} are defined by
\begin{align}\label{Eq_Quermassintegrals_No_Subspace}
U_j(C)=\frac{1}{2}\int_{G(d,d-j)}\1_{\{C\cap L\neq\{0\}\}}\,\nu_{d-j}(\text{d}L),\quad j=0,\ldots,d,
\end{align}
where the Grassmannian of all $j$-dimensional linear subspaces in $\R^d$ is denoted by $G(d,j)$,
and $\nu_j$ is the unique rotationally invariant Borel probability measure on $G(d,j)$.
Thus, $2U_j(C)$ is the probability that a uniformly distributed $(d-j)$-dimensional random linear subspace intersects $C$ non-trivially.
Then, following~\cite{HugSchneider2016}, the functional $Y_{k,j}(C)$ is defined as
\begin{align*}
Y_{k,j}(C):=\sum_{F\in\cF_k(C)}U_j(F), \quad 0\le j<k\le d.
\end{align*}
The quantities $Y_{k,j}$ are significant since they comprise a lot of important geometric functionals such as the number of $k$-faces of $C$, which is given by  $f_k(C) = 2 Y_{k,0}(C)$  (provided no $k$-face is a linear subspace), and the conical quermassintegrals $U_j(C)= Y_{\dim C, j}(C)$.
In \cite[Theorem 4.1]{HugSchneider2016}, Hug and Schneider derived a formula for the expected size functionals of $S_n$, namely
\begin{align}\label{Eq_Expected_SizeFunct_Sn}
\E Y_{d-k+j,\,d-k}(S_n)=\frac{2^{k-j}\binom{n}{k-j}C(n-k+j,j)}{2C(n,d)}
\end{align}
for all $1\le j\le k\le d$ and $n>k-j$.

\subsection{Weyl random cones}\label{sec:weyl_random_cones}
A natural question arises whether similar calculations are possible for a cone randomly chosen from the Weyl tessellations.
At first, we consider the type $A_{n-1}$. Let $Y_1,\ldots,Y_n$  be (possibly dependent) random vectors in $\R^d$ satisfying assumption~\ref{label_GP1_A} a.s.
For example, if the tuple $(Y_1,\ldots,Y_n)$ has a
joint density with respect to the Lebesgue measure on $(\R^d)^n$ or the product spherical Lebesgue measure on
$(\SP^{d-1})^n$, then \ref{label_GP1_A} is satisfied a.s.,\ as will be shown
in Lemma~\ref{Lemma_AS_General_Position}.
The \textit{Weyl random cone} $\Dna$ of type $A_{n-1}$ is defined as follows: Among the cones of the random Weyl tessellation $\mathcal{W}^A(Y_1,\ldots,Y_n)$ choose one uniformly at random.
The next two theorems give formulas for the expected number of $k$-faces of $\Dna$ and, more generally,
for the expected size functionals $Y_{k,\,j}$.

\begin{satz}\label{theorem:exp_k-faces_Weyl_cones_A}
Let $Y_1,\ldots,Y_n$ be random vectors in $\R^d$ that satisfy \ref{label_GP1_A} a.s.
Then, for $k=1,\ldots,d$, the expected number of $k$-faces of the Weyl random cone $\Dna$ of type $A_{n-1}$ is given by
\begin{align*}
\E f_k(\Dna)=\frac{\tbinom{n-1}{d-k}D^A(n-d+k,k)}{D^A(n,d)}\frac{n!}{(n-d+k)!}.
\end{align*}
\end{satz}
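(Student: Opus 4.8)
\emph{Plan.} Since $\Dna$ is drawn uniformly from the $D^A(n,d)$ cones of $\mathcal W^A(Y_1,\dots,Y_n)$ (a number which is a.s.\ constant by Theorem~\ref{theorem:number_cones_A}), I would first condition on the $Y_i$ and write
\begin{align*}
\E\big[f_k(\Dna)\,\big|\,Y_1,\dots,Y_n\big]=\frac{1}{D^A(n,d)}\sum_{C\in\mathcal W^A(Y_1,\dots,Y_n)}f_k(C)=\frac{1}{D^A(n,d)}\sum_{F\in\F_k^A(Y_1,\dots,Y_n)}m(F),
\end{align*}
where $m(F)$ denotes the number of cones of the tessellation having $F$ as a $k$-face. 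The whole task is to show that $\sum_F m(F)$ equals the claimed deterministic constant a.s.; the outer expectation then disappears. As in the proofs of Theorems~\ref{theorem:number_cones_A} and~\ref{Theorem_Number_WeylFaces_Tessellation_An-1}, I would encode everything through the linear map $v\mapsto(\langle v,Y_1\rangle,\dots,\langle v,Y_n\rangle)$, whose image is a $d$-dimensional subspace $L\subset\R^n$ in general position with respect to the braid (type $A_{n-1}$) arrangement. Under this correspondence the cones $D_\sigma^A$ are the Weyl chambers $\{x_{\sigma(1)}<\dots<x_{\sigma(n)}\}$ met by $L$, and the $k$-faces $F\in\F_k^A$ are exactly the braid faces met by $L$ whose associated ordered set partition $(B_1,\dots,B_r)$ of $\{1,\dots,n\}$ has $r=n-d+k$ blocks, since such a braid face has dimension $r$ and a general-position $L$ cuts it down to dimension $r+d-n=k$.

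The crucial step, and the one I expect to be the main obstacle, is to prove that
\begin{align*}
m(F)=\prod_{i=1}^{r}|B_i|!
\end{align*}
for \emph{every} such face $F$, i.e.\ that every chamber refining the ordered set partition of $F$ is genuinely met by $L$. Let $\tilde F$ be the braid face of $F$, with $\lim\tilde F=\{x:x_a=x_b \text{ within blocks}\}$ of dimension $r$. The chambers containing $\tilde F$ correspond to the $\prod_i|B_i|!$ ways of breaking ties inside the blocks, i.e.\ to the chambers of the link (braid) arrangement induced in a complement $N$ of $\lim\tilde F$, with $\dim N=n-r=d-k$. Let $p\colon\R^n\to N$ be the projection along $\lim\tilde F$. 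Since $L$ meets $\relint\tilde F$ transversally, $\ker(p|_L)=L\cap\lim\tilde F$ has dimension $k$, so $p|_L$ has rank $d-k=\dim N$ and is \emph{surjective}. Hence for each tie-breaking chamber $\kappa\subset N$ there is a direction $w\in L$ with $p(w)\in\relint\kappa$; starting from a point in $L\cap\relint\tilde F$ and moving in direction $w$ stays in $L$ and enters the corresponding chamber, which is therefore met by $L$. This yields $m(F)=\prod_i|B_i|!$. The real work here is to make the transversality and general-position statements rigorous from assumption~\ref{label_GP1_A}, which I would extract from the same general-position analysis already developed for Theorems~\ref{theorem:number_cones_A} and~\ref{Theorem_Number_WeylFaces_Tessellation_An-1}.

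With the multiplicities in hand, it remains to evaluate
\begin{align*}
\sum_{F\in\F_k^A}m(F)=\sum_{(B_1,\dots,B_r)\ \text{met by }L}\ \prod_{i=1}^r|B_i|!,
\end{align*}
the sum running over ordered set partitions of $\{1,\dots,n\}$ into $r=n-d+k$ blocks whose braid face is met by $L$. I would group these by the underlying unordered partition. For a fixed unordered partition into $r$ blocks, the admissible orderings are exactly the type $A_{r-1}$ Weyl chambers on the $r$ block-values, living in the $r$-dimensional space $\lim\tilde F$, that are met by the $k$-dimensional section $L\cap\lim\tilde F$; by Theorem~\ref{theorem:number_cones_A} their number equals $D^A(r,k)$, independently of the partition (this is the distribution-free heart of the count, again requiring general position of the section). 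Since $\prod_i|B_i|!$ depends only on the unordered partition, factoring out $D^A(r,k)$ leaves
\begin{align*}
\sum_{F\in\F_k^A}m(F)=D^A(n-d+k,k)\sum_{\{B_1,\dots,B_r\}}\prod_{i=1}^{r}|B_i|!,
\end{align*}
the inner sum being over set partitions of $\{1,\dots,n\}$ into $r$ nonempty blocks. This inner sum is the unsigned Lah number $\binom{n-1}{r-1}\tfrac{n!}{r!}$, counting partitions of $\{1,\dots,n\}$ into $r$ linearly ordered nonempty blocks. Substituting $r=n-d+k$ and using $\binom{n-1}{r-1}=\binom{n-1}{d-k}$ gives $\sum_F m(F)=\binom{n-1}{d-k}D^A(n-d+k,k)\tfrac{n!}{(n-d+k)!}$, which is deterministic; dividing by $D^A(n,d)$ and taking expectations then yields the asserted formula.
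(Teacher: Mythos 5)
Your proposal is correct and reaches the right constant, and its first half coincides with the paper's strategy: the paper also reduces the theorem to the a.s.\ deterministic multiplicity-weighted face count $\sum_F m(F)$ (Proposition~\ref{prop:faces_with_multiplicity_A}), identifies the faces with ordered set partitions into $n-d+k$ blocks via the map $v\mapsto(\langle v,Y_1\rangle,\dots,\langle v,Y_n\rangle)$ (Proposition~\ref{Prop_Char_Weyl_Faces_An-1} and Lemma~\ref{Lem_Connection_Faces_Chamber_An-1}), and uses the multiplicity $m(F)=\prod_i|B_i|!$. Where you genuinely diverge is the evaluation of the resulting sum over braid faces met by $L$: the paper simply cites the closed-form count of intersected faces of the reflection arrangement from~\cite{Kabluchko2019} (Theorem~\ref{Theorem_Weyl_Faces_Intersected_Type_An-1}), whereas you re-derive it by grouping the ordered partitions by their underlying unordered partition, applying the chamber count $D^A(r,k)$ (Theorem~\ref{theorem:number_cones_A}) inside each $r$-dimensional block subspace $\lin\tilde F$, and then summing $\prod_i|B_i|!$ over set partitions via the Lah number $\binom{n-1}{r-1}\frac{n!}{r!}$. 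This is essentially the same ``group by unordered partition, count admissible orderings as chambers of a smaller $A$-arrangement'' decomposition that the paper deploys in Section~\ref{sec:proof_weyl_tess} to prove Theorem~\ref{Theorem_Number_WeylFaces_Tessellation_An-1}; your version buys a more self-contained argument that needs only the chamber count as external input, at the price of having to verify general position of the section $L\cap\lin\tilde F$ for each partition rather than once.

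One spot in your sketch is thinner than it should be: your surjectivity-of-$p|_L$ argument only establishes $m(F)\ge\prod_i|B_i|!$ (every tie-breaking chamber is met), and in fact under the paper's conventions this lower bound is nearly automatic, since any chamber $C^A_\sigma\supseteq\tilde F$ satisfies $C^A_\sigma\cap L\supseteq F\neq\{0\}$ and hence is a cone of the tessellation. The substantive half is the upper bound: that no cone of the tessellation coming from a chamber \emph{incompatible} with the ordered block structure can contain $F$. This requires the argument that such a containment would force $F$ into a proper face of $\tilde F$, whose intersection with $L$ has dimension strictly less than $k$ by general position --- the content of Example~\ref{Example_WeylCones_containing_face} and Proposition~\ref{Prop_Char_Weyl_Faces_An-1}(iii). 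You do flag that making the general-position statements rigorous is ``the real work,'' so this is an incompleteness of emphasis rather than a wrong step, but the ``i.e.'' in your formulation of the crucial step points at the easy direction, not the one that actually needs the work.
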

\begin{satz} \label{Theorem_sizefunction_Dn_An-1}
Let $Y_1,\ldots,Y_n$ be random vectors in $\R^d$ that satisfy \ref{label_GP1_A} a.s. and let $\Dna$
be a Weyl random cone of type $A_{n-1}$. Then, for all $1\le j\le k\le d$ it holds that
\begin{align*}
\E Y_{d-k+j,\,d-k}(\Dna)=\frac{\tbinom{n-1}{k-j}D^A(n-k+j,j)}{2D^A(n,d)}\frac{n!}{(n-k+j)!}.
\end{align*}
\end{satz}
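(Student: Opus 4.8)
\emph{Proof proposal.} The plan is to combine three ingredients: the uniform choice of $\Dna$, the integral-geometric definition of the conical quermassintegral $U_{d-k}$, and the fact that intersecting a type $A_{n-1}$ Weyl tessellation with a generic subspace produces again a Weyl tessellation of type $A_{n-1}$ in the smaller space. Throughout write $N:=D^A(n,d)$ and $m:=d-k+j$; note $0\le d-k<m\le d$ since $1\le j\le k\le d$. First I would condition on $Y=(Y_1,\ldots,Y_n)$. Since $\Dna$ is chosen uniformly among the $N$ cones of $\mathcal{W}^A(Y)$ and, by Theorem~\ref{theorem:number_cones_A}, $N$ is a.s.\ constant,
\[
\E Y_{m,\,d-k}(\Dna)=\frac1N\,\E\sum_{C\in\mathcal{W}^A(Y)}\sum_{F\in\cF_m(C)}U_{d-k}(F).
\]
Inserting the definition~\eqref{Eq_Quermassintegrals_No_Subspace}, which reads $U_{d-k}(F)=\tfrac12\int_{G(d,k)}\1_{\{F\cap L\ne\{0\}\}}\,\nu_k(\dint L)$ because $d-(d-k)=k$, and using Tonelli (all integrands are nonnegative and bounded) to pull the subspace integral outside gives
\[
\E Y_{m,\,d-k}(\Dna)=\frac{1}{2N}\,\E\int_{G(d,k)}\Big(\sum_{C}\sum_{F\in\cF_m(C)}\1_{\{F\cap L\ne\{0\}\}}\Big)\nu_k(\dint L),
\]
where the bracket counts the pairs $(C,F)$ with $F$ an $m$-face of $C$ met nontrivially by $L$.

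The heart of the argument is to identify this count with a face count of the induced tessellation. For $v\in L$ one has $\langle v,Y_i\rangle=\langle v,\Pi_LY_i\rangle$, where $\Pi_LY_i$ is the orthogonal projection of $Y_i$ onto $L$; hence $C\cap L$ for $C=D^A_\sigma$ is exactly the type $A_{n-1}$ Weyl cone of $\Pi_LY_1,\ldots,\Pi_LY_n$ inside $L\cong\R^k$. For $\nu_k$-a.e.\ $L$ the projected vectors satisfy~\ref{label_GP1_A} in $\R^k$ (this uses $n\ge d+1\ge k+1$ together with $L$ being in general position), so the cones $C\cap L$ of full dimension $k$ form the Weyl tessellation $\mathcal{T}_L:=\mathcal{W}^A(\Pi_LY_1,\ldots,\Pi_LY_n)$, which has $D^A(n,k)$ cones by Theorem~\ref{theorem:number_cones_A} in dimension $k$. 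For such $L$, a full-dimensional cone $C$ (pointed by~\ref{label_GP1_A}) satisfies either $C\cap L=\{0\}$ or $\dim(C\cap L)=k$; consequently $F\cap L\ne\{0\}$ forces $\dim(C\cap L)=k$, and then the transversality relation $\dim(F\cap L)=\dim F+k-d=j$ shows that $F\mapsto F\cap L$ maps the $m$-faces of $C$ met by $L$ bijectively onto the $j$-faces of $C\cap L$. Running over all $C$ yields a bijection between the pairs counted by the bracket and the pairs (cone of $\mathcal{T}_L$, its $j$-face), so that
\[
\sum_{C}\sum_{F\in\cF_m(C)}\1_{\{F\cap L\ne\{0\}\}}=\sum_{C'\in\mathcal{T}_L}f_j(C')\qquad(\nu_k\text{-a.e. }L).
\]

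It remains to evaluate this face sum, which is a purely deterministic quantity depending only on~\ref{label_GP1_A} for the configuration $\Pi_LY_1,\ldots,\Pi_LY_n$ in $\R^k$. Applying Theorem~\ref{theorem:exp_k-faces_Weyl_cones_A} in dimension $k$ — its right-hand side is distribution-free, and multiplying the expected $j$-face number of a uniform cone of $\mathcal{T}_L$ by the constant cone count $D^A(n,k)$ removes the averaging — gives
\[
\sum_{C'\in\mathcal{T}_L}f_j(C')=\binom{n-1}{k-j}D^A(n-k+j,j)\,\frac{n!}{(n-k+j)!}
\]
for $\nu_k$-a.e.\ $L$, a value independent of $L$ and of $Y$. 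Substituting this constant back, both the subspace integral and the outer expectation trivialize, and dividing by $2N=2D^A(n,d)$ produces exactly the asserted formula. The main obstacle is the middle step: off a $\nu_k$-null set of subspaces and on an a.s.\ event for $Y$, one must verify both that the projected configuration inherits~\ref{label_GP1_A} and that the intersection map is a genuine bijection on faces with the stated dimension shift (in particular the dichotomy $C\cap L\in\{\{0\}\}$ or $\dim(C\cap L)=k$). This general-position bookkeeping is precisely the ``main ingredient'' advertised in the abstract; once it is in place, all the combinatorics is imported from the deterministic counting theorems.
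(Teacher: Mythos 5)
Your proposal is correct and follows essentially the same route as the paper's proof of the $B_n$-analogue (Theorem~\ref{Theorem_sizefunction_Dn}, with the $A_{n-1}$-case stated to be identical): uniform choice plus the Crofton-type definition~\eqref{Eq_Quermassintegrals_No_Subspace}, Tonelli, identification of the incidence count with that of the induced tessellation $\mathcal{W}^A|_L$ of the projected vectors in a generic $k$-dimensional subspace (the paper's Lemma~\ref{Lemma_Faces_Induced_in_L} is exactly the bijection you defer as ``general-position bookkeeping''), and then the deterministic count of Proposition~\ref{prop:faces_with_multiplicity_A} applied in the ambient space $L$ — which is what your ``Theorem~\ref{theorem:exp_k-faces_Weyl_cones_A} times $D^A(n,k)$'' amounts to.
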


Analogous results hold in the $B_n$-case.
Let $Y_1,\ldots,Y_n$ be random vectors in $\R^d$ that satisfy assumption~\ref{label_GP1} a.s.;
see Lemma~\ref{Lemma_AS_General_Position}  for a natural general setting in which~\ref{label_GP1} holds.
We define
the \textit{Weyl random cone} $\Dnb$ of type $B_n$  as follows: Among the cones of the random Weyl tessellation $\mathcal{W}^B(Y_1,\ldots,Y_n)$ choose one uniformly at random. 
The next theorems are the analogues to Theorems~\ref{theorem:exp_k-faces_Weyl_cones_A} and~\ref{Theorem_sizefunction_Dn_An-1} in the $B_n$-case.

\begin{satz}\label{theorem:exp_k-faces_Weyl_cones_B}
Let $Y_1,\ldots,Y_n$ be random vectors in $\R^d$ that satisfy \ref{label_GP1} a.s.
Then, for all $k=1,\ldots,d$, the expected number of $k$-faces of the Weyl random cone $\Dnb$ of type $B_n$ is given by
\begin{align}\label{Eq_Kor_Number_faces}
\E f_k(\Dnb)=\frac{2^{d-k}\tbinom{n}{d-k}D^B(n-d+k,k)}{D^B(n,d)}\frac{n!}{(n-d+k)!}.
\end{align}
\end{satz}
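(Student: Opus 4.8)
The plan is to compute the sum $\Sigma_k:=\sum_{C\in\mathcal{W}^B(Y_1,\ldots,Y_n)}f_k(C)$ and divide by the total number of cones. Since $\Dnb$ is chosen uniformly among the $D^B(n,d)$ cones of the tessellation (Theorem~\ref{Theorem_Number_Weyl_Cones}), one has $\E f_k(\Dnb)=\Sigma_k/D^B(n,d)$ for every realization satisfying \ref{label_GP1}, so the statement is purely deterministic. First I would rewrite $\Sigma_k$ as the number of incidence pairs $(C,G)$ with $G$ a $k$-face of the cone $C$. Using the explicit description of the cones as the non-trivial $D^B_{\varepsilon,\sigma}=\{v:\langle v,\varepsilon_1Y_{\sigma(1)}\rangle\le\cdots\le\langle v,\varepsilon_nY_{\sigma(n)}\rangle\le 0\}$, a $k$-face is obtained by turning exactly $d-k$ of the $n$ defining inequalities into equalities. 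Thus $\Sigma_k$ equals the number of triples $(\varepsilon,\sigma,T)$, where $T\subseteq\{1,\ldots,n\}$ with $|T|=d-k$ marks the active inequalities, for which the resulting set is a genuine $k$-dimensional face.

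The decisive structural step is to identify this face. Making the inequalities in $T$ tight groups the indices into consecutive blocks of coordinates forced to be equal, plus (when the last inequality $\langle v,\varepsilon_nY_{\sigma(n)}\rangle\le 0$ is active) a block forced to vanish. Collapsing each of the $m:=n-d+k$ non-vanishing blocks to a single representative vector $u_i$ (the common functional $v\mapsto\langle v,\varepsilon_a Y_{\sigma(a)}\rangle$ of that block, read on the span $M_T$ of the face), I claim the face is exactly the type-$B_m$ Weyl cone $\{v\in M_T:\langle v,u_1\rangle\le\cdots\le\langle v,u_m\rangle\le 0\}$ sitting inside $M_T\cong\R^k$. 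Consequently the face is $k$-dimensional precisely when it is one of the cones of the smaller Weyl tessellation $\mathcal{W}^B(u_1,\ldots,u_m)$, and by Theorem~\ref{Theorem_Number_Weyl_Cones} there are exactly $D^B(m,k)=D^B(n-d+k,k)$ admissible orderings-with-signs of the $m$ blocks, independently of all remaining data.

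It then remains to count the free combinatorial data and to check that it factors through $D^B(n-d+k,k)$. I would encode a valid triple by: the choice of the vanishing block $Z$ of size $\beta_0$ (with $Z=\varnothing$ exactly when the last inequality is inactive); an unordered set partition of $\{1,\ldots,n\}\setminus Z$ into $m$ non-empty value blocks; the internal orderings and relative signs inside all blocks; and finally the admissible global ordering-and-signing of the $m$ value blocks, contributing $D^B(n-d+k,k)$ by the previous paragraph. The internal data is unconstrained: the sign choices that do not affect validity always number $2^{n-m}=2^{d-k}$, while summing the factorial weights $\prod_i\beta_i!$ over set partitions into $m$ blocks produces Lah numbers $\binom{N-1}{m-1}N!/m!$. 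Carrying out the summation over $\beta_0$ by the hockey-stick identity and collapsing the cases $Z=\varnothing$ and $Z\ne\varnothing$ via Pascal's rule $\binom{n-1}{m-1}+\binom{n-1}{m}=\binom{n}{m}$ yields $\Sigma_k=2^{d-k}\binom{n}{d-k}D^B(n-d+k,k)\,\tfrac{n!}{(n-d+k)!}$, using $\binom{n}{m}=\binom{n}{d-k}$. Dividing by $D^B(n,d)$ gives the asserted formula~\eqref{Eq_Kor_Number_faces}.

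The hard part will be the structural identification of the face as a smaller type-$B$ Weyl cone together with the accompanying general-position bookkeeping. One must verify that assumption~\ref{label_GP1} for $Y_1,\ldots,Y_n$ in $\R^d$ descends, for every block pattern, ordering and sign choice, to the general-position hypothesis for the representatives $u_1,\ldots,u_m$ in $M_T\cong\R^k$, so that $\mathcal{W}^B(u_1,\ldots,u_m)$ genuinely has $D^B(n-d+k,k)$ cones; here the reformulation of~\ref{label_GP1} as general position of the $d$-plane $\{(\langle v,Y_i\rangle)_{i=1}^n:v\in\R^d\}$ relative to the reflection arrangement $\cA(B_n)$ is the natural tool. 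One must also confirm that a face is $k$-dimensional exactly when the associated smaller cone is full-dimensional (so that "active inequalities" and "genuine face" coincide), and that the re-encoding above is a bijection, so that no incidence pair is over- or under-counted.
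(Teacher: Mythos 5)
Your argument is correct in outline and the enumeration does close: with $m=n-d+k$, the weighted sum over block structures gives $\sum_{r=0}^{d-k}\binom{n}{r}r!\,2^{r}\cdot 2^{n-r-m}\cdot\tfrac{(n-r)!}{m!}\binom{n-r-1}{m-1}\cdot D^B(m,k)=2^{d-k}\binom{n}{d-k}\tfrac{n!}{m!}D^B(m,k)$, as you claim. But your route is genuinely different from the paper's. The paper proves the incidence count (its Proposition~\ref{prop:faces_with_multiplicity_B}) by translating the non-degeneracy of each candidate face $F^B_{\eps,\sigma}(l_1,\ldots,l_{n-d+k})\subset\R^d$ into the condition that the corresponding face $C^B_{\eps,\sigma}(l_1,\ldots,l_{n-d+k})$ of a Weyl chamber in $\R^n$ meets the $d$-dimensional subspace $L^\perp$ nontrivially (Lemma~\ref{Lem_Connection_Face_WeylFace}, a duality argument via Farkas-type lemmas), and then quotes the count of such intersected faces from \cite{Kabluchko2019}. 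You instead stay in $\R^d$: you fix the $k$-dimensional subspace spanned by a face, observe that the faces it contains form a type-$B_{n-d+k}$ Weyl tessellation of that subspace generated by the projected block representatives, and invoke Theorem~\ref{Theorem_Number_Weyl_Cones} there; the rest is Lah-number and hockey-stick bookkeeping. This is essentially the mechanism the paper uses to prove Theorem~\ref{Theorem_Number_WeylFaces_Tessellation_Bn} (faces counted \emph{without} multiplicity), adapted to carry the multiplicities. What your approach buys is a cleaner self-similarity picture and no new appeal to the $\R^n$-side result for general $k$; what it costs is that it is conditional on Theorem~\ref{Theorem_Number_Weyl_Cones} (the chamber count), whose proof in the paper still goes through exactly the $L^\perp$-machinery you avoid, so the external input is localized to $k=d$ rather than eliminated.

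The deferred verifications you list are the real content and are nontrivial: that tightening $d-k$ inequalities under~\ref{label_GP1} always yields either $\{0\}$ or a face of dimension exactly $k$ (Proposition~\ref{Prop_Char_WeylFaces}), that distinct block data give distinct faces so the encoding is a bijection (Proposition~\ref{Prop uniq}), that the multiplicity of a face in the cones is exactly $\prod_i\beta_i!\cdot r!\cdot 2^{r}$ (Proposition~\ref{Prop_WeylKegel_die_Seiten_enthalten_B_n}), and that~\ref{label_GP1} descends to the projections $\Pi_{M_T}(y_{l_i})$ via the induced-arrangement argument. All of these are carried out in the paper, so none is a gap in principle, but a complete write-up of your proof would have to include them.
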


\begin{satz} \label{Theorem_sizefunction_Dn}
Let $Y_1,\ldots,Y_n$ be random vectors in $\R^d$ that satisfy~\ref{label_GP1} a.s.\ and
let $\Dnb$ be a Weyl random cone of type $B_n$ in $\R^d$. Then, for all $1\le j\le k\le d$, it holds that
\begin{align}\label{Eq_Theorem_Sizefunction_Dn}
\E Y_{d-k+j,\,d-k}(\Dnb)=\frac{2^{k-j}\tbinom{n}{k-j}D^B(n-k+j,j)}{2D^B(n,d)}\frac{n!}{(n-k+j)!}.
\end{align}
\end{satz}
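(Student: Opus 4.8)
The plan is to reduce the expected size functional to a single deterministic count of faces of the type-$B_n$ reflection arrangement $\cA(B_n)$ that are met by a subspace in general position, and then to quote the corresponding enumerative result of~\cite{KVZ17}. Since $\Dnb$ is, by definition, uniform among the $D^B(n,d)$ cones of $\mathcal{W}^B(Y_1,\ldots,Y_n)$, and this number is deterministic by Theorem~\ref{Theorem_Number_Weyl_Cones}, I first write
\[
\E Y_{d-k+j,\,d-k}(\Dnb)=\frac{1}{D^B(n,d)}\,\E\!\!\sum_{C\in\mathcal{W}^B(Y_1,\ldots,Y_n)} Y_{d-k+j,\,d-k}(C).
\]
Inserting $Y_{d-k+j,d-k}(C)=\sum_{F\in\cF_{d-k+j}(C)}U_{d-k}(F)$ together with the integral–geometric formula~\eqref{Eq_Quermassintegrals_No_Subspace} for $U_{d-k}$ (which integrates over $G(d,k)$), and applying Fubini's theorem, I obtain
\[
\E Y_{d-k+j,\,d-k}(\Dnb)=\frac{1}{2D^B(n,d)}\int_{G(d,k)}\E\,T(L)\,\nu_k(\dd L),\qquad T(L):=\sum_{C}\#\{F\in\cF_{d-k+j}(C):F\cap L\neq\{0\}\}.
\]

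Next I would show that $T(L)$ equals, almost surely and for $\nu_k$-almost every $L$, a deterministic constant $T$; this is what makes the answer distribution-free. To identify $T$ I pass to the arrangement picture already used for Theorems~\ref{Theorem_Number_Weyl_Cones} and~\ref{Theorem_Number_WeylFaces_Tessellation_Bn}. The map $A\colon\R^d\to\R^n$, $v\mapsto(\langle v,Y_1\rangle,\ldots,\langle v,Y_n\rangle)$, is injective under~\ref{label_GP1}; its image $W:=A(\R^d)$ is a $d$-dimensional subspace in general position with respect to $\cA(B_n)$, and the cones $D^B_{\varepsilon,\sigma}$ are exactly the preimages $A^{-1}(W\cap R)$ of the chambers $R$ of $\cA(B_n)$ met by $W$. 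A face $F\in\cF_{d-k+j}(C)$ corresponds to a face $G$ of the associated chamber $\bar R$ of dimension $n-d+(d-k+j)=n-k+j$, and, because $M:=A(L)$ is a $k$-dimensional subspace contained in $W$, one has $F\cap L\neq\{0\}$ if and only if $M$ meets the relative interior of $G$. Hence $T(L)$ counts the flags $(R,G)$ with $W$ meeting $\inte R$, with $G\in\cF_{n-k+j}(\bar R)$, and with $M$ meeting $\relint G$.

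The technical heart — and the step I expect to be the main obstacle — is to argue that the constraint ``$W$ meets $\inte R$'' is automatic once $M$ meets $\relint G$, so that the nested pair $M\subset W$ collapses to a single subspace. Indeed, near a relative interior point of $G$ the arrangement $\cA(B_n)$ looks like a central arrangement in the quotient $\R^n/\lin G$ of dimension $k-j$, whose chambers are exactly the chambers $R$ having $G$ as a face; since $M\subset W$ already meets $\relint G$, one computes $\dim(W\cap\lin G)=d-k+j$, so the image of $W$ in this quotient has full dimension $k-j$ and therefore meets every such chamber. Consequently $T(L)$ equals the number of $(n-k+j)$-dimensional faces of the Weyl chambers of type $B_n$, each counted once for every chamber to which it belongs, that are intersected by the generic $k$-dimensional subspace $M$. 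A short check that $M$ has the generic intersection dimensions with all flats of $\cA(B_n)$ (which holds for generic $W$ and generic $L$, even though $M\subset W$) confirms that $M$ is in general position and that $T(L)$ is constant.

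Finally I would evaluate this face count by quoting the type-$B$ results of~\cite{KVZ17} (with~\cite{KVZ15} for the underlying identities), which express the number of $m'$-faces of the $B_n$ Weyl chambers met by a general-position subspace through the quantities $D^B$ and a combinatorial prefactor; with $m'=n-k+j$ and intersection dimension $j$ this gives $T=2^{k-j}\binom{n}{k-j}D^B(n-k+j,j)\,\tfrac{n!}{(n-k+j)!}$. Substituting into the first display yields the claimed formula. The same scheme, run with the type-$A$ ingredients, proves the companion Theorem~\ref{Theorem_sizefunction_Dn_An-1}; as a consistency check, specializing to $k=d$ and $j=k$ recovers the expected face numbers of Theorem~\ref{theorem:exp_k-faces_Weyl_cones_B} through $f_k=2Y_{k,0}$. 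Beyond the collapse argument above, the remaining care lies in making the correspondence between tessellation faces and arrangement faces rigorous with the correct multiplicities and in extracting the exact prefactor from the enumerative formula of~\cite{KVZ17}.
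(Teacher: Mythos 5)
Your proposal is correct in substance and reaches the right constant, but it organizes the key combinatorial step differently from the paper. Both proofs begin identically: uniformity of $\Dnb$ among the $D^B(n,d)$ cones, insertion of the Crofton-type formula \eqref{Eq_Quermassintegrals_No_Subspace}, and Fubini, reducing everything to showing that the double sum $\sum_{F}\ind_{\{F\cap L\neq\{0\}\}}\sum_{C}\ind_{\{F\subseteq C\}}$ is a deterministic constant. At that point the paper stays in $\R^d$: it shows (Lemma~\ref{Lemma_Faces_Induced_in_L}) that the pairs $(F,C)$ with $F\cap U\neq\{0\}$ biject with the pairs $(F_j,D)$ of the Weyl tessellation induced in $U$ by the projections $\Pi_U(y_i)$, and then simply reuses Proposition~\ref{prop:faces_with_multiplicity_B} in the ambient space $U$. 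You instead lift directly to $\R^n$ via $A\colon v\mapsto(\langle v,Y_i\rangle)_i$, recast the sum as a flag count $(R,G)$ for $\cA(B_n)$ involving the nested pair $M=A(L)\subset W=A(\R^d)$, and use a localization (``collapse'') argument to show the chamber condition on $W$ is implied by the face condition on $M$; this replaces Lemma~\ref{Lemma_Faces_Induced_in_L}. The two routes converge: the subspace to which the paper's Proposition~\ref{prop:faces_with_multiplicity_B} is ultimately applied is exactly your $M=A(U)$, and both invoke the same enumerative input --- which is Theorem~\ref{Theorem_Weyl_Faces_Intersected}, taken from~\cite{Kabluchko2019}, not~\cite{KVZ17} --- with a $k$-dimensional subspace in general position. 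Your collapse argument and the check that $M$ is in general position with respect to $\cA(B_n)$ (for generic $L$, given \ref{label_GP2} for $W$) are both sound and can be made rigorous along the lines you sketch; the paper's route buys reuse of already-established machinery (so the proof of this theorem is short given Sections 2--3), while yours is more direct at this point but must carry the localization and general-position verifications explicitly. Two small omissions: you should justify that the $(d-k+j)$-faces are a.s.\ not linear subspaces (the paper does this by showing the Weyl cones of type $B_n$ are pointed under \ref{label_GP1}), since \eqref{Eq_Quermassintegrals_No_Subspace} is only the correct formula for $U_{d-k}$ in that case; and the multiplicity matching between cones containing $F$ and chambers containing $G$ needs the uniqueness statement of Proposition~\ref{Prop uniq} (or an equivalent), which you flag but do not supply.
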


From Theorems~\ref{Theorem_sizefunction_Dn_An-1} and~\ref{Theorem_sizefunction_Dn} we will derive the expected values of several interesting  geometrical functionals as special cases.

\begin{kor}\label{cor:quermass_weyl_cones}
The expected conic quermassintegrals of the Weyl random cones $\Dna$ and $\Dnb$ are given by
\begin{align*}
\E U_j(\Dna)=\frac{D^A(n,d-j)}{2D^A(n,d)},\quad\E U_j(\Dnb)=\frac{D^B(n,d-j)}{2D^B(n,d)},\quad j=0,\ldots,d-1.
\end{align*}
\end{kor}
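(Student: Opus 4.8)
The plan is to obtain Corollary~\ref{cor:quermass_weyl_cones} as a direct specialization of Theorems~\ref{Theorem_sizefunction_Dn_An-1} and~\ref{Theorem_sizefunction_Dn}, choosing the two face-parameters of the size functional so that it collapses onto a conic quermassintegral of the whole cone.

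First I would record that each cone of a Weyl tessellation is a chamber of the generating hyperplane arrangement, hence $d$-dimensional and not a linear subspace; in particular $\Dna$ and $\Dnb$ are a.s.\ $d$-dimensional. By the identity $U_j(C)=Y_{\dim C,\,j}(C)$ recalled in Section~\ref{sec:schlaefli_coverefron}, this gives $U_j(\Dna)=Y_{d,\,j}(\Dna)$ and $U_j(\Dnb)=Y_{d,\,j}(\Dnb)$ for every $j\in\{0,\ldots,d-1\}$, so it remains to evaluate $\E Y_{d,\,j}$ in both cases.

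The second step is the index matching. Writing the functional in Theorem~\ref{Theorem_sizefunction_Dn_An-1} as $Y_{d-k+j',\,d-k}$ with admissible parameters $1\le j'\le k\le d$, the demand $Y_{d-k+j',\,d-k}=Y_{d,\,j}$ forces $d-k=j$ and $d-k+j'=d$, that is $k=d-j$ and $j'=k=d-j$; this choice is admissible exactly for $j\in\{0,\ldots,d-1\}$. Substituting $k=j'=d-j$ collapses $\binom{n-1}{k-j'}=\binom{n-1}{0}=1$ and $n!/(n-k+j')!=n!/n!=1$, while $D^A(n-k+j',j')=D^A(n,d-j)$, so that
\begin{align*}
\E U_j(\Dna)=\E Y_{d,\,j}(\Dna)=\frac{D^A(n,d-j)}{2D^A(n,d)}.
\end{align*}
The $B_n$-case runs verbatim: the same substitution in Theorem~\ref{Theorem_sizefunction_Dn} additionally reduces the prefactor $2^{k-j'}$ to $2^0=1$, producing $\E U_j(\Dnb)=D^B(n,d-j)/(2D^B(n,d))$.

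Since the whole argument is a bookkeeping specialization of already-established formulas, I do not expect a genuine obstacle. The only points deserving care are confirming that the random cones are a.s.\ full-dimensional and not linear subspaces---so that the identification $U_j=Y_{d,\,j}$ is legitimate---and verifying that the substitution $k=j'=d-j$ stays inside the admissible window $1\le j'\le k\le d$ throughout the stated range $j=0,\ldots,d-1$.
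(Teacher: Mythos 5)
Your proposal is correct and follows exactly the paper's own argument: the paper likewise obtains the corollary by substituting $k=j'=d-j$ into Theorems~\ref{Theorem_sizefunction_Dn_An-1} and~\ref{Theorem_sizefunction_Dn} and using $U_j(C)=Y_{\dim C,\,j}(C)$. Your additional checks (admissibility of the index window and that the cones are a.s.\ full-dimensional, non-subspace) are sound and only make the bookkeeping more explicit.
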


\begin{proof}
Replacing $k$ and $j$ in (\ref{Eq_Theorem_Sizefunction_Dn}) both by $d-j$, we obtain
\begin{align*}
\E U_j(\Dna)=\E Y_{d,\,j}(\Dna)=\E Y_{d,\,d-(d-j)}(\Dna)=\frac{D^A(n,d-j)}{2D^A(n,d)},
\end{align*}
which proves the claim for $\Dna$. The $B_n$-case follows in the same way.
\end{proof}
As another consequence of Theorems~\ref{Theorem_sizefunction_Dn_An-1} and~\ref{Theorem_sizefunction_Dn}, we can compute the expectations of the so-called \textit{conic intrinsic volumes}.
Let $C\subset \R^d$ be a polyhedral cone.
Let $\Pi_C:\R^d\to C$ be the Euclidean projection on $C$,
that is $\Pi_C(x)$ is the unique vector in $C$ minimizing the Euclidean distance to $x\in\R^d$.
Then, the $j$-th conic intrinsic volume of $C$ is defined by
\begin{align*}
\upsilon_j(C):=\sum_{F\in\F_j(C)}\PP(\Pi_C(g)\in\relint F),\quad j=0,\ldots,d,
\end{align*}
where $g$ is a  $d$-dimensional standard Gaussian random vector and $\relint F$ denotes the interior of the face $F$
taken with respect to its linear hull.  For an extensive account on conic intrinsic volumes,
their properties and applications,
we refer to~\cite[Section~6.5]{Schneider2008} as well as~\cite{Amelunxen2017} and~\cite{Amelunxen2014}.

\begin{kor}\label{cor:intr_vol_weyl_random_cones}
The expected conic intrinsic volumes of the Weyl random cones $\Dna$ and $\Dnb$ are given by
\begin{align*}
\E\upsilon_0(\Dna)&=\frac{D^A(n,d)-D^A(n,d-1)}{2D^A(n,d)},\quad\E \upsilon_j(\Dna)=\frac{\stirling n{n-d+j}}{D^A(n,d)},\quad j=1,\ldots,d,\\
\E\upsilon_0(\Dnb)&=\frac{D^B(n,d)-D^B(n,d-1)}{2D^B(n,d)},\quad\E \upsilon_j(\Dnb)=\frac{\stirlingb n{n-d+j}}{D^B(n,d)},\quad j=1,\ldots,d.
\end{align*}
\end{kor}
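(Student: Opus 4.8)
The plan is to obtain the intrinsic volumes from the conic quermassintegrals already evaluated in Corollary~\ref{cor:quermass_weyl_cones}, by inverting the standard linear relation between the two families of functionals. I would carry out the computation for the type $A_{n-1}$ cone $\Dna$ in full; the type $B_n$ case is verbatim the same with $D^A$ and $\stirling{n}{k}$ replaced by $D^B$ and $\stirlingb{n}{k}$ throughout, so it needs no separate idea.

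First I would recall the relation between the conic quermassintegrals (Grassmann angles) and the conic intrinsic volumes, valid for any polyhedral cone $C$ that is not a linear subspace (see e.g.\ \cite[Section~6.5]{Schneider2008}): for every $j=0,\ldots,d-1$,
\begin{align*}
U_j(C)=\upsilon_{j+1}(C)+\upsilon_{j+3}(C)+\upsilon_{j+5}(C)+\cdots,
\end{align*}
together with the normalization $\upsilon_0(C)+\upsilon_1(C)+\cdots+\upsilon_d(C)=1$ and the fact that $U_d(C)=0$ (since $G(d,0)=\{\{0\}\}$ and $C$ meets $\{0\}$ only trivially). Subtracting two consecutive such identities telescopes the right-hand sides and produces the inversion
\begin{align*}
\upsilon_j(C)=U_{j-1}(C)-U_{j+1}(C),\qquad j=1,\ldots,d,
\end{align*}
with the convention $U_d(C)=U_{d+1}(C)=0$. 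Since $\Dna$ is a.s.\ a proper cone under~\ref{label_GP1_A} (not a subspace), I may take expectations and insert $\E U_i(\Dna)=D^A(n,d-i)/(2D^A(n,d))$ from Corollary~\ref{cor:quermass_weyl_cones}, obtaining for $1\le j\le d$
\begin{align*}
\E\upsilon_j(\Dna)=\frac{D^A(n,d-j+1)-D^A(n,d-j-1)}{2D^A(n,d)},
\end{align*}
where one reads $D^A(n,m)=0$ for $m\le0$, consistently with $U_d=0$.

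The next step is to simplify the numerator. By the definition of $D^A$, the sum $D^A(n,d-j+1)$ ranges over $\stirling{n}{n-d+j},\stirling{n}{n-d+j+2},\ldots$ while $D^A(n,d-j-1)$ ranges over $\stirling{n}{n-d+j+2},\stirling{n}{n-d+j+4},\ldots$; hence every term cancels except the first, giving $D^A(n,d-j+1)-D^A(n,d-j-1)=2\stirling{n}{n-d+j}$ and therefore $\E\upsilon_j(\Dna)=\stirling{n}{n-d+j}/D^A(n,d)$ for $j=1,\ldots,d$, as claimed. For $j=0$ I would use the normalization: summing the formula just obtained over $j=1,\ldots,d$ and noting that the two quermassintegral sums partition the indices $n-d+1,\ldots,n$ by parity, so that $D^A(n,d)+D^A(n,d-1)=2\sum_{j=1}^{d}\stirling{n}{n-d+j}$, one finds
\begin{align*}
\E\upsilon_0(\Dna)=1-\sum_{j=1}^d\E\upsilon_j(\Dna)=1-\frac{D^A(n,d)+D^A(n,d-1)}{2D^A(n,d)}=\frac{D^A(n,d)-D^A(n,d-1)}{2D^A(n,d)}.
\end{align*}

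I expect the only genuinely delicate point to be keeping the index bookkeeping exactly right in the relation between $U_j$ and $\upsilon_j$ — in particular the parity shift in the inversion $\upsilon_j=U_{j-1}-U_{j+1}$ and the boundary values $U_d=U_{d+1}=0$ that keep the telescoping valid up to $j=d$. Once this relation is correctly installed, everything collapses to the elementary cancellation in the defining sums of $D^A$ (resp.\ $D^B$), and the $B_n$ case follows with no further obstacle.
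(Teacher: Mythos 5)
Your proposal is correct and follows essentially the same route as the paper: both invoke the conic Crofton relation between the quermassintegrals $U_j$ and the intrinsic volumes $\upsilon_j$ (the paper quotes it directly as $\upsilon_j=U_{j-1}-U_{j+1}$ with the boundary cases $\upsilon_d=U_{d-1}$, $\upsilon_{d-1}=U_{d-2}$, which is exactly the inversion you derive from $U_j=\upsilon_{j+1}+\upsilon_{j+3}+\cdots$), substitute the expected quermassintegrals from Corollary~\ref{cor:quermass_weyl_cones}, exploit the telescoping cancellation $D^A(n,d-j+1)-D^A(n,d-j-1)=2\stirling{n}{n-d+j}$, and recover the $j=0$ case from the normalization $\upsilon_0+\cdots+\upsilon_d=1$. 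Your index bookkeeping and boundary conventions $U_d=U_{d+1}=0$ check out, so no gap remains.
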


\begin{proof} We use the conic Crofton formula~\cite[p.~261]{Schneider2008}, which states that
\begin{align*}
\upsilon_d(C)=U_{d-1}(C),\quad \upsilon_{d-1}(C)=U_{d-2}(C),\quad\upsilon_j(C)=U_{j-1}(C)-U_{j+1}(C),
\end{align*}
for all $j = 1,\ldots,d-2$ and any cone $C\subset\R^d$. Thus, for $j = 1,\ldots,d-2$, we obtain
\begin{align*}
\E \upsilon_j(\Dna)=\E U_{j-1}(\Dna)-\E U_{j+1}(\Dna)=\frac{D^A(n,d-j+1)-D^A(n,d-j-1)}{2D^A(n,d)}=\frac{\stirling n{n-d+j}}{D^A(n,d)}.
\end{align*}
For $j=d-1$ and $j=d$, the claim follows in a similar way. For the case $j=0$, we use the relation $\upsilon_0(C)+\upsilon_1(C)+\ldots+\upsilon_d(C)=1$ which holds for all cones $C\subset\R^d$.
The $B_n$-case follows analogously.
\end{proof}

The last special case of Theorems~\ref{Theorem_sizefunction_Dn_An-1} and~\ref{Theorem_sizefunction_Dn} we want to state
deals with the expected sums of solid angles of the faces of the random Weyl cones.
The \textit{solid angle} of a cone $C\subset\R^d$ is defined as
\begin{align*}
\alpha(C):=\P(Z\in C),
\end{align*}
where $Z$ is a uniformly distributed random vector on the unit sphere in the linear hull of $C$.
Following~\cite{HugSchneider2016}, the functionals $\Lambda_k(C)$ are defined by
\begin{align*}
\Lambda_k(C)=\sum_{F\in\cF_k(C)}\alpha(F).
\end{align*}

\begin{kor} Let $\Dna$ and $\Dnb$ be the Weyl random cones of types $A_{n-1}$ and $B_n$, respectively. Then, it holds that
\begin{align*}
\E\Lambda_k(\Dna) =\frac{n!\binom {n-1}{d-k}}{(n-d+k)!}\frac{1}{D^A(n,d)},\quad\E\Lambda_k(\Dnb)=\frac{2^{d-k}n!\binom n{d-k}}{(n-d+k)!}\frac{1}{D^B(n,d)},\quad k=1,\ldots,d.
\end{align*}
\end{kor}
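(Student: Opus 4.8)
The plan is to reduce the solid-angle sum $\Lambda_k$ to one of the size functionals $Y_{k,j}$ evaluated in Theorems~\ref{Theorem_sizefunction_Dn_An-1} and~\ref{Theorem_sizefunction_Dn}, and then to specialize the indices. Concretely, I would first prove the pointwise identity $\Lambda_k(C)=Y_{k,k-1}(C)$ for any cone $C\subset\R^d$ none of whose $k$-faces is a linear subspace. Fix a $k$-face $F\in\cF_k(C)$. Since $F$ is a $k$-dimensional cone, its only $k$-dimensional face is $F$ itself, and projecting a standard Gaussian vector $g$ onto $F$ lands in $\relint F$ precisely when the projection of $g$ onto $\lin F$ already lies in $\relint F$; by rotational symmetry of the Gaussian distribution on $\lin F$ this probability is the solid angle, so $\alpha(F)=\upsilon_k(F)$.

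Next I would identify $\upsilon_k(F)$ with $U_{k-1}(F)$. Writing the conic Crofton relations recalled in the proof of Corollary~\ref{cor:intr_vol_weyl_random_cones} in the uniform form $\upsilon_j=U_{j-1}-U_{j+1}$ (valid with $U_d\equiv 0$, since $G(d,0)=\{0\}$ forces the indicator in~\eqref{Eq_Quermassintegrals_No_Subspace} to vanish) and summing over $j=k,k+2,k+4,\ldots$ telescopes to the inversion $U_{k-1}(F)=\upsilon_k(F)+\upsilon_{k+2}(F)+\ldots$. As $\dim F=k$, every term past $\upsilon_k(F)$ vanishes, so $U_{k-1}(F)=\upsilon_k(F)=\alpha(F)$. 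Summing over all $F\in\cF_k(C)$ yields $\Lambda_k(C)=\sum_{F\in\cF_k(C)}U_{k-1}(F)=Y_{k,k-1}(C)$, and in particular $\E\Lambda_k(\Dna)=\E Y_{k,k-1}(\Dna)$ and $\E\Lambda_k(\Dnb)=\E Y_{k,k-1}(\Dnb)$.

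It then remains to read off $Y_{k,k-1}$ from the size-functional theorems. In Theorem~\ref{Theorem_sizefunction_Dn_An-1} the functional $Y_{d-p+q,\,d-p}$ becomes $Y_{k,k-1}$ upon taking $p=d-k+1$ and $q=1$ (so that $d-p=k-1$ and $d-p+q=k$; the constraint $1\le q\le p\le d$ then holds for all $k=1,\ldots,d$), which gives
\begin{align*}
\E\Lambda_k(\Dna)=\frac{\binom{n-1}{d-k}\,D^A(n-d+k,1)}{2D^A(n,d)}\frac{n!}{(n-d+k)!}.
\end{align*}
The $B_n$-case is the same specialization of Theorem~\ref{Theorem_sizefunction_Dn}, producing $D^B(n-d+k,1)$ together with an extra factor $2^{d-k}$.

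Finally I would evaluate the two one-dimensional quantities. From the definition, $D^A(m,1)=2\stirling{m}{m}=2$, since $\stirling{m}{m}=1$ is the leading coefficient of $t(t+1)\cdots(t+m-1)$ and $\stirling{m}{j}=0$ for $j>m$; likewise $D^B(m,1)=2\stirlingb{m}{m}=2$, because $\stirlingb{m}{m}=1$ is the leading coefficient of $(t+1)(t+3)\cdots(t+2m-1)$. Substituting $D^A(n-d+k,1)=D^B(n-d+k,1)=2$ cancels the factor $2$ in the denominator and produces exactly the two claimed formulas. The only genuinely geometric step is the identification $\alpha(F)=U_{k-1}(F)$; once it is established, everything else is bookkeeping with the already-proved size-functional formulas and the two elementary Stirling-number evaluations, so I expect no serious obstacle beyond getting that first identity right.
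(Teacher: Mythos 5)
Your proposal is correct and follows essentially the same route as the paper: establish $\alpha(F)=U_{k-1}(F)$ for $k$-dimensional faces (which the paper simply asserts and you justify via the conic Crofton inversion), conclude $\Lambda_k=Y_{k,k-1}$, and then specialize Theorems~\ref{Theorem_sizefunction_Dn_An-1} and~\ref{Theorem_sizefunction_Dn} with the index substitution $k\mapsto d-k+1$, $j=1$, using $D^A(m,1)=D^B(m,1)=2$. The index bookkeeping and the Stirling-number evaluations all check out.
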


\begin{proof}
Since $\alpha(F)=U_{k-1}(F)$ holds for any $k$-dimensional face $F$, we obtain that
\begin{align*}
\E\Lambda_k(\Dna)=\E\sum_{F\in\cF_k(\Dna)}\alpha(F)=\E\sum_{F\in\cF_k(\Dna)}U_{k-1}(F)=\E Y_{k,k-1}(\Dna).
\end{align*}
Then, the claim follows by replacing $k$ by $d-k+1$ and setting $j=1$ in Theorem~\ref{Theorem_sizefunction_Dn_An-1}. The $B_n$-case is similar.
\end{proof}

\subsection{Dual Weyl random cones}\label{sec:dual_weyl_random_cones}
We are now going to define two more families of random cones, denoted by $\mathcal{C}_n^A$ and
$\mathcal{C}_n^B$. As it turns out, these cones are dual to the Weyl random cones $\Dna$ and $\Dnb$ in distribution.
We start with the $A_{n-1}$-case. Let $Y_1,\ldots,Y_n$ be random vectors in $\R^d$ satisfying assumption~\ref{label_GP1_A} a.s. Additionally, assume that $Y_1,\ldots,Y_n$ are \textit{exchangeable}, that is,
\begin{align*}
(Y_1,\ldots,Y_n)\eqdistr (Y_{\sigma(1)},\ldots,Y_{\sigma(n)})
\end{align*}
for every permutation $\sigma$ of the set $\{1,\ldots,n\}$.
Let $\mathcal{G}_n^A$ be a random cone defined by
$$
\mathcal{G}_n^A := \{v\in\R^d:\langle v,Y_1\rangle\le\ldots\le\langle v,Y_n\rangle\}.
$$
Using the exchangeability we have
\begin{equation}\label{eq:probab_G_non_zero}
\P\big(\mathcal{G}_n^A \neq\{0\}\big)
=\frac{1}{n!}\sum_{\sigma\in \mathcal S_n}\P\big(\{v\in\R^d:\langle v,Y_{\sigma(1)}\rangle \le \ldots\le \langle v,Y_{\sigma(n)}\rangle\}\neq\{0\}\big)
=\frac{D^A(n,d)}{n!},
\end{equation}
where we applied Theorem~\ref{theorem:number_cones_A} in the last step. The dual cone of $\mathcal{G}_n^A$ is given by
$$
\big(\mathcal{G}_n^A\big)^\circ = \pos\{Y_1-Y_2,\ldots,Y_{n-1}-Y_n\}.
$$
This follows from the well-known fact that
\begin{equation}\label{eq:dual_of_pos_hull}
(\pos\{x_1,\ldots,x_n\})^\circ=\{v\in\R^d:\langle v,x_i\rangle\le 0 \text{ for all } i=1,\ldots,n\}.
\end{equation}
We can now state  an alternative description of the Weyl cone $\Dna$.
\begin{prop}\label{Prop:Equivalent_Def_Dna}
Let $Y_1,\ldots,Y_n$ be random vectors in $\R^d$ which are exchangeable and  satisfy \ref{label_GP1_A} a.s.
Then, $\Dna$ has the same distribution as $\mathcal{G}_n^A$ conditioned on the event that $\mathcal{G}_n^A\neq \{0\}$.
\end{prop}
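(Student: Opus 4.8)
The plan is to identify the law of $\Dna$ by testing it against an arbitrary bounded measurable functional $f$ on the space of polyhedral cones (topologised, say, via the Hausdorff metric on their traces on $\SP^{d-1}$), and to show that $\E[f(\Dna)]$ equals $\E[f(\mathcal{G}_n^A)\mid \mathcal{G}_n^A\neq\{0\}]$. First I would unwind the definition of $\Dna$: conditionally on $Y_1,\dots,Y_n$, this cone is drawn uniformly among the $D^A(n,d)$ cones of $\mathcal{W}^A(Y_1,\dots,Y_n)$. Using that, under~\ref{label_GP1_A}, these cones are precisely the non-trivial cones $D_\sigma^A$ and that $\sigma\mapsto D_\sigma^A$ is a bijection from $\{\sigma\in\mathcal{S}_n: D_\sigma^A\neq\{0\}\}$ onto the tessellation (the same fact already underlying~\eqref{eq:probab_G_non_zero}), I can write
\begin{align*}
\E\big[f(\Dna)\,\big|\,Y_1,\dots,Y_n\big]
=\frac{1}{D^A(n,d)}\sum_{\sigma\in\mathcal{S}_n}f(D_\sigma^A)\,\1\{D_\sigma^A\neq\{0\}\},
\end{align*}
and hence, after taking expectations,
\begin{align*}
\E\big[f(\Dna)\big]
=\frac{1}{D^A(n,d)}\sum_{\sigma\in\mathcal{S}_n}\E\big[f(D_\sigma^A)\,\1\{D_\sigma^A\neq\{0\}\}\big].
\end{align*}

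The second step exploits exchangeability. The key observation is that $D_\sigma^A(Y_1,\dots,Y_n)=\mathcal{G}_n^A(Y_{\sigma(1)},\dots,Y_{\sigma(n)})$, so every summand is one and the same measurable functional applied to the permuted tuple $(Y_{\sigma(1)},\dots,Y_{\sigma(n)})$. Since $(Y_{\sigma(1)},\dots,Y_{\sigma(n)})\eqdistr(Y_1,\dots,Y_n)$ by exchangeability, each of the $n!$ summands equals $\E[f(\mathcal{G}_n^A)\,\1\{\mathcal{G}_n^A\neq\{0\}\}]$, whence
\begin{align*}
\E\big[f(\Dna)\big]=\frac{n!}{D^A(n,d)}\,\E\big[f(\mathcal{G}_n^A)\,\1\{\mathcal{G}_n^A\neq\{0\}\}\big].
\end{align*}
Finally I would invoke~\eqref{eq:probab_G_non_zero}, which yields $n!/D^A(n,d)=1/\P(\mathcal{G}_n^A\neq\{0\})$, to rewrite the right-hand side as $\E[f(\mathcal{G}_n^A)\mid\mathcal{G}_n^A\neq\{0\}]$. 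As $f$ is arbitrary, this is exactly the asserted equality of distributions.

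The linearity-of-expectation and the change of summation variable are routine; the only delicate points are structural. One is the measurable meaning of \emph{``choose a cone uniformly at random''}, which I would realise by an auxiliary uniform random variable independent of $(Y_1,\dots,Y_n)$ together with a measurable enumeration of the tessellation cones, legitimising the conditional uniform law above. The main obstacle, however, is the bijection $\sigma\mapsto D_\sigma^A$: one must know that distinct permutations producing non-trivial cones produce \emph{distinct} cones, so that the uniform draw over cones coincides with the uniform draw over the corresponding permutations with no double counting. This is precisely where~\ref{label_GP1_A} is used — in the interior of a full-dimensional cone the values $\langle v,Y_i\rangle$ are pairwise distinct with a constant ordering, which pins down $\sigma$ uniquely — and it is the very fact that makes the number of non-trivial permutations equal to $D^A(n,d)$ in~\eqref{eq:probab_G_non_zero}. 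Since this has effectively been settled in the discussion preceding the proposition, the argument reduces to the exchangeability bookkeeping above.
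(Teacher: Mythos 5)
Your proposal is correct and follows essentially the same route as the paper: the paper's proof likewise tests the two laws against an arbitrary Borel set of cones (your bounded functional $f$ in place of $\1_B$), symmetrizes over $\mathcal{S}_n$ using exchangeability, identifies the resulting average with the uniform pick from $\mathcal{W}^A(Y_1,\ldots,Y_n)$ via the bijection $\sigma\mapsto D_\sigma^A$ on non-trivial cones, and normalizes by \eqref{eq:probab_G_non_zero}. The only difference is cosmetic — you start from $\E f(\Dna)$ and desymmetrize, while the paper starts from $\P(\mathcal{G}_n^A\in B\mid \mathcal{G}_n^A\neq\{0\})$ and symmetrizes — and your explicit remark on why distinct non-trivial permutations yield distinct cones is a point the paper leaves implicit.
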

\begin{proof}
The method of proof is similar to that of~\cite[Theorem~3.1]{HugSchneider2016}.
Let  $\P_Y$ be the joint probability law of $(Y_1,\ldots,Y_n)$ on $(\R^d)^n$.
For a Borel set $B$ of polyhedral cones that does not contain $\{0\}$, we have
\begin{align*}
&\PP(\mathcal{G}_n^A \in B|\mathcal{G}_n^A\neq \{0\})\\
&	\quad=\frac{1}{\P(\mathcal G_n^A\neq \{0\})}
\int_{(\R^d)^n}\1_B(\{v\in\R^d:\langle v,y_1\rangle\le\ldots\le\langle v,y_n\rangle\})\,\P_Y(\text{d}(y_1,\ldots,y_n))\\
&	\quad=\frac{n!}{D^A(n,d)}\int_{(\R^d)^n}\frac{1}{n!}\sum_{\sigma\in \mathcal S_n}\1_B(\{v\in\R^d:\langle v,y_{\sigma(1)}\rangle\le\ldots\le\langle v,y_{\sigma(n)}\rangle\})\,\P_Y(\text{d}(y_1,\ldots,y_n))\\
&	\quad=\int_{(\R^d)^n}\frac{1}{D^A(n,d)}\sum_{C\in \mathcal{W}^A(y_1,\ldots,y_n)}\1_B(C)\,\P_Y(\text{d}(y_1,\ldots,y_n))\\
&	\quad=\PP(\Dna\in B),
\end{align*}
where we used~\eqref{eq:probab_G_non_zero} and the exchangeability of $(Y_1,\ldots,Y_n)$ in the second step.
\end{proof}

We now define the \textit{dual Weyl cone} $\mathcal{C}_n^A$ of type $A_{n-1}$ as the random cone whose distribution is that of
$
\pos\{Y_1-Y_2,\ldots,Y_{n-1}-Y_n\}
$
conditioned on the event that this positive hull is not equal to $\R^d$.
The next proposition follows from the above discussion.

\begin{prop}\label{Prop_Dual_Of_Dna}
Let $Y_1,\ldots,Y_n$ be random vectors in $\R^d$ which are exchangeable and  satisfy \ref{label_GP1_A} a.s.
Then, $\mathcal{C}_n^A$ has the same distribution as $\big(\Dna\big)^\circ$.
\end{prop}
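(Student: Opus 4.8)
The plan is to reduce the statement to the alternative description of $\Dna$ provided by Proposition~\ref{Prop:Equivalent_Def_Dna} and then push everything through the dual-cone map. By that proposition, $\Dna$ has the same distribution as $\mathcal{G}_n^A$ conditioned on $\{\mathcal{G}_n^A\neq\{0\}\}$. Since $C\mapsto C^\circ$ is a deterministic and measurable operation on the space of polyhedral cones, applying it on both sides preserves equality in distribution, so that $(\Dna)^\circ$ has the same law as $(\mathcal{G}_n^A)^\circ$ conditioned on the same event $\{\mathcal{G}_n^A\neq\{0\}\}$.

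It then remains to rewrite both the cone and the conditioning event in the language defining $\mathcal{C}_n^A$. For the cone, I would use the identity $(\mathcal{G}_n^A)^\circ=\pos\{Y_1-Y_2,\ldots,Y_{n-1}-Y_n\}$ recorded before the statement, which follows from~\eqref{eq:dual_of_pos_hull}, the equivalence of $\langle v,Y_1\rangle\le\ldots\le\langle v,Y_n\rangle$ with $\langle v,Y_i-Y_{i+1}\rangle\le 0$ for all $i$, and the fact that a closed convex cone coincides with its second dual. For the conditioning event, the elementary observation is that, for any closed convex cone $C$, one has $C=\{0\}$ if and only if $C^\circ=\R^d$, because $\{0\}^\circ=\R^d$, $(\R^d)^\circ=\{0\}$ and $(C^\circ)^\circ=C$. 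Applying this to $C=\mathcal{G}_n^A$ identifies $\{\mathcal{G}_n^A\neq\{0\}\}$ with $\{\pos\{Y_1-Y_2,\ldots,Y_{n-1}-Y_n\}\neq\R^d\}$. Substituting both rewritings shows that $(\Dna)^\circ$ has the distribution of $\pos\{Y_1-Y_2,\ldots,Y_{n-1}-Y_n\}$ conditioned on this hull being different from $\R^d$, which is exactly the definition of $\mathcal{C}_n^A$.

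I do not expect any genuine difficulty: the argument is essentially bookkeeping around duality, as the text itself signals by saying the proposition follows from the preceding discussion. The only point deserving a line of care is the very first step, namely that conditioning commutes with the deterministic transformation $C\mapsto C^\circ$. Because the conditioning event is itself a function of $\mathcal{G}_n^A$, this is the routine fact that if $\xi$ has the law of $Z$ given an event $\{Z\in E\}$, then $f(\xi)$ has the law of $f(Z)$ given $\{Z\in E\}$ for any measurable $f$; I would state it once and apply it, relying on the standard Borel structure on the space of polyhedral cones used in~\cite{HugSchneider2016}.
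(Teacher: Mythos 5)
Your proposal is correct and is essentially the argument the paper intends: the paper gives no separate proof, stating only that the proposition ``follows from the above discussion,'' which is exactly the combination of Proposition~\ref{Prop:Equivalent_Def_Dna}, the identity $(\mathcal{G}_n^A)^\circ=\pos\{Y_1-Y_2,\ldots,Y_{n-1}-Y_n\}$, and the biduality observation identifying the two conditioning events that you spell out. Your extra remark that conditioning commutes with the deterministic map $C\mapsto C^\circ$ is the right (routine) point to make explicit.
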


In the $B_n$-case let $Y_1,\ldots,Y_n$ be random vectors in $\R^d$ satisfying assumption~\ref{label_GP1} a.s. Additionally, assume that $Y_1,\ldots,Y_n$ are \textit{symmetrically exchangeable}, that is,
\begin{align*}
(Y_1,\ldots,Y_n)\eqdistr (\eps_1Y_{\sigma(1)},\ldots,\eps_nY_{\sigma(n)})
\end{align*}
for all permutations $\sigma$ of the set $\{1,\ldots,n\}$ and all vectors of signs $\eps=(\eps_1,\ldots,\eps_n)\in\{\pm 1\}^n$.
Let $\mathcal{G}_n^B$ be a random cone defined by
$$
\mathcal{G}_n^B := \{v\in\R^d:\langle v,Y_1\rangle\le\ldots\le\langle v,Y_n\rangle\le 0\}.
$$
Using the symmetric exchangeability we have
\begin{equation*}
\P\big(\mathcal{G}_n^B \neq\{0\}\big)
=\frac{1}{2^n n!}\sum_{\eps,\sigma}\P\big(\{v\in\R^d:\langle v,\eps_1Y_{\sigma(1)}\rangle \le \ldots\le \langle v,\eps_nY_{\sigma(n)}\rangle\le 0\}\neq\{0\}\big)
=\frac{D^B(n,d)}{2^n n!},
\end{equation*}
where we applied Theorem~\ref{Theorem_Number_Weyl_Cones} in the last step. It follows from~\eqref{eq:dual_of_pos_hull} that
the dual cone of $\mathcal{G}_n^B$ is given by
$$
\big(\mathcal{G}_n^B\big)^\circ = \pos\{Y_1-Y_2,\ldots,Y_{n-1}-Y_n,Y_n\}.
$$
Define the \textit{dual Weyl cone} $\mathcal C_n^B$ of type $B_n$ as the random cone whose distribution is that of
$
\pos\{Y_1-Y_2,\ldots,Y_{n-1}-Y_n,Y_n\}
$
conditioned on the event that this positive hull is not equal to $\R^d$. The following is the analogue
to Propositions~\ref{Prop:Equivalent_Def_Dna} and~\ref{Prop_Dual_Of_Dna} and can be proven similarly.

\begin{prop}\label{Prop_Dual_Of_Dnb}
Let $Y_1,\ldots,Y_n$ be random vectors in $\R^d$ that are symmetrically
exchangeable and satisfy assumption \ref{label_GP1} a.s.
Then, $\Dnb$ has the same distribution as $\mathcal{G}_n^B$ conditioned on the event $\mathcal{G}_n^B\neq \{0\}$,
while $\mathcal{C}_n^B$ has the same distribution as $(\mathcal{D}_n^B)^\circ$.
\end{prop}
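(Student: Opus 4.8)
The plan is to establish the two assertions separately, following verbatim the arguments of Propositions~\ref{Prop:Equivalent_Def_Dna} and~\ref{Prop_Dual_Of_Dna} but with the symmetric group $\mathcal{S}_n$ replaced by the hyperoctahedral group $\{\pm 1\}^n\rtimes\mathcal{S}_n$ of order $2^n n!$ acting by signed permutations.

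For the first assertion, I would fix a Borel set $B$ of polyhedral cones with $\{0\}\notin B$ and compute $\PP(\mathcal{G}_n^B\in B\mid\mathcal{G}_n^B\neq\{0\})$. Writing this conditional probability as the integral of $\1_B(\{v\in\R^d:\langle v,y_1\rangle\le\ldots\le\langle v,y_n\rangle\le 0\})$ against the law $\P_Y$ and inserting the normalization $\PP(\mathcal{G}_n^B\neq\{0\})^{-1}=2^n n!/D^B(n,d)$ from the display preceding the proposition, symmetric exchangeability lets me replace the integrand by the average $\frac{1}{2^n n!}\sum_{\eps,\sigma}\1_B(D^B_{\varepsilon,\,\sigma})$ over all pairs $(\eps,\sigma)\in\{\pm 1\}^n\times\mathcal{S}_n$, exactly as in the $A_{n-1}$-case.

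The crux is then the counting identity $\sum_{\eps,\sigma}\1_B(D^B_{\varepsilon,\,\sigma})=\sum_{C\in\mathcal{W}^B(y_1,\ldots,y_n)}\1_B(C)$. Since $\{0\}\notin B$, the pairs with $D^B_{\varepsilon,\,\sigma}=\{0\}$ contribute nothing, and under~\ref{label_GP1} the map $(\eps,\sigma)\mapsto D^B_{\varepsilon,\,\sigma}$ restricted to the remaining pairs is a bijection onto $\mathcal{W}^B(y_1,\ldots,y_n)$: for $v$ in the interior of a chamber every $\langle v,y_i\rangle$ is nonzero, and demanding that $\eps_i\langle v,y_{\sigma(i)}\rangle$ be strictly increasing and negative forces $\sigma$ to order the quantities $|\langle v,y_i\rangle|$ decreasingly and $\eps_i=-\sgn\langle v,y_{\sigma(i)}\rangle$, so $(\eps,\sigma)$ is recovered from the chamber. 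This is the signed version of the correspondence noted after the definition of $D^B_{\varepsilon,\,\sigma}$. Once the identity is in hand, the integrand equals $\frac{1}{D^B(n,d)}\sum_{C\in\mathcal{W}^B}\1_B(C)$, whose $\P_Y$-integral is $\PP(\Dnb\in B)$ by the definition of the Weyl random cone, yielding $\Dnb\eqdistr\mathcal{G}_n^B\mid\{\mathcal{G}_n^B\neq\{0\}\}$.

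For the dual assertion I would use that $C\mapsto C^\circ$ is a measurable involution on polyhedral cones that swaps $\{0\}$ and $\R^d$, so that $\{\mathcal{G}_n^B\neq\{0\}\}$ is the same event as $\{(\mathcal{G}_n^B)^\circ\neq\R^d\}$. Combining this with the already-recorded identity $(\mathcal{G}_n^B)^\circ=\pos\{Y_1-Y_2,\ldots,Y_{n-1}-Y_n,Y_n\}$ and the first assertion gives $(\Dnb)^\circ\eqdistr(\mathcal{G}_n^B)^\circ\mid\{(\mathcal{G}_n^B)^\circ\neq\R^d\}$, which is by definition the distribution of $\mathcal{C}_n^B$. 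The only genuinely non-routine step is verifying the bijection in the counting identity; everything else is bookkeeping transcribed from the $A_{n-1}$-proof.
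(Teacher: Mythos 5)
Your proposal is correct and follows essentially the same route as the paper, which proves this proposition by declaring it analogous to Propositions~\ref{Prop:Equivalent_Def_Dna} and~\ref{Prop_Dual_Of_Dna} with the symmetric group replaced by the group of signed permutations; your explicit verification of the bijection $(\eps,\sigma)\mapsto D^B_{\eps,\sigma}$ on the non-degenerate pairs, and the observation that duality swaps the events $\{\mathcal{G}_n^B\neq\{0\}\}$ and $\{(\mathcal{G}_n^B)^\circ\neq\R^d\}$, are exactly the details the paper leaves implicit.
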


Using well-known duality relations for the number of faces,
the conic intrinsic volumes and the conic quermassintegrals,
we can compute the expected values of geometric functionals of the random cones $\mathcal{C}_n^A$ and $\mathcal{C}_n^B$.
We assume that $Y_1,\ldots,Y_n$ are  exchangeable and satisfy~\ref{label_GP1_A} a.s.~(in the $A_{n-1}$-case) or that $Y_1,\ldots,Y_n$ are symmetrically exchangeable and satisfy~\ref{label_GP1} a.s.~(in the $B_n$-case).

\begin{kor}
The expected number of $k$-faces of $\mathcal{C}_n^A$ and $\mathcal C_n^B$, for $k=0,\ldots,d-1$, are given by
\begin{align*}
\E f_k(\mathcal{C}_n^A)=\frac{\binom{n-1}{k}D^A(n-k,d-k)}{D^A(n,d)}\frac{n!}{(n-k)!},\quad\E f_k(\mathcal{C}_n^B)=\frac{2^k\binom{n}{k}D^B(n-k,d-k)}{D^B(n,d)}\frac{n!}{(n-k)!}.
\end{align*}
\end{kor}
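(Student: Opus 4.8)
The plan is to reduce both statements to the already-computed face counts of the Weyl cones $\Dna$ and $\Dnb$ via conic duality. By Propositions~\ref{Prop_Dual_Of_Dna} and~\ref{Prop_Dual_Of_Dnb}, the dual Weyl cone $\mathcal{C}_n^A$ has the same distribution as $(\Dna)^\circ$, and $\mathcal{C}_n^B$ has the same distribution as $(\Dnb)^\circ$. Hence it suffices to compute $\E f_k((\Dna)^\circ)$ and $\E f_k((\Dnb)^\circ)$, and for this I would invoke the standard combinatorial duality between the face lattice of a polyhedral cone and that of its polar.

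First I would record that, under~\ref{label_GP1_A} (resp.~\ref{label_GP1}), the Weyl cone $\Dna$ (resp.~$\Dnb$) is almost surely a full-dimensional pointed polyhedral cone. Full-dimensionality is immediate, since a Weyl cone is the closure of an open cell of the tessellation and thus has nonempty interior. Pointedness follows because the dual cone $\pos\{Y_1-Y_2,\ldots,Y_{n-1}-Y_n\}$ (resp.\ its $B_n$-analogue $\pos\{Y_1-Y_2,\ldots,Y_{n-1}-Y_n,Y_n\}$), being the positive hull of $n-1\ge d$ (resp.\ $n\ge d$) vectors which span $\R^d$ by the general position assumption, is itself full-dimensional, so its dual is pointed. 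For such a cone $C$ the face lattices of $C$ and of $C^\circ$ are anti-isomorphic, conjugate faces $F\leftrightarrow F^\diamond$ satisfying $\dim F+\dim F^\diamond=d$; see~\cite[Section~6.5]{Schneider2008}. Consequently
\begin{align*}
f_k(C^\circ)=f_{d-k}(C),\qquad k=0,\ldots,d,
\end{align*}
and, applying this for each realization and taking expectations, $\E f_k(\mathcal{C}_n^A)=\E f_{d-k}(\Dna)$ and $\E f_k(\mathcal{C}_n^B)=\E f_{d-k}(\Dnb)$.

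It then remains to substitute. Replacing $k$ by $d-k$ in Theorem~\ref{theorem:exp_k-faces_Weyl_cones_A} gives
\begin{align*}
\E f_{d-k}(\Dna)=\frac{\binom{n-1}{k}D^A(n-k,d-k)}{D^A(n,d)}\frac{n!}{(n-k)!},
\end{align*}
which is exactly the asserted formula for $\E f_k(\mathcal{C}_n^A)$, while the same substitution in Theorem~\ref{theorem:exp_k-faces_Weyl_cones_B} turns $2^{d-k}$ into $2^{k}$ and $\binom{n}{d-k}$ into $\binom{n}{k}$, yielding the stated expression for $\E f_k(\mathcal{C}_n^B)$. The only genuinely non-routine step is the face-duality reduction: one must ensure that the conjugate-face correspondence really complements dimensions to $d$, which holds precisely because the Weyl cones are both full-dimensional and pointed. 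I expect this verification — rather than the ensuing binomial bookkeeping — to be the crux, and it is settled by the pointedness and full-dimensionality observation above.
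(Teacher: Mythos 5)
Your proposal is correct and follows essentially the same route as the paper: invoke Propositions~\ref{Prop_Dual_Of_Dna} and~\ref{Prop_Dual_Of_Dnb}, apply the duality relation $f_k(C^\circ)=f_{d-k}(C)$, and substitute $d-k$ for $k$ in Theorems~\ref{theorem:exp_k-faces_Weyl_cones_A} and~\ref{theorem:exp_k-faces_Weyl_cones_B}. Your extra verification that the Weyl cones are a.s.\ full-dimensional and pointed (so the face correspondence complements dimensions) is a sound elaboration of what the paper simply calls the ``well-known duality relation.''
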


\begin{proof}
The formulas follow from Theorems~\ref{theorem:exp_k-faces_Weyl_cones_A} and~\ref{theorem:exp_k-faces_Weyl_cones_B} together with Propositions~\ref{Prop_Dual_Of_Dna} and~\ref{Prop_Dual_Of_Dnb}, using the well-known duality relation $f_k(C)=f_{d-k}(C^\circ)$ for any cone $C\subset \R^d$.
\end{proof}

\begin{kor}
The expected conic quermassintegrals of $\mathcal{C}_n^A$ and $\mathcal C_n^B$ are given by
\begin{align*}
\E U_j(\mathcal{C}_n^A)=\frac{D^A(n,d)-D^A(n,j)}{2D^A(n,d)},\quad\E U_j(\mathcal{C}_n^B)=\frac{D^B(n,d)-D^B(n,j)}{2D^B(n,d)}, \quad j=1,\ldots,d.
\end{align*}
\end{kor}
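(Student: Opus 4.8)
The plan is to reduce the statement, via the distributional duality already established, to the expected quermassintegrals of the \emph{primal} Weyl cones computed in Corollary~\ref{cor:quermass_weyl_cones}, together with a pointwise polarity relation for conic quermassintegrals. By Propositions~\ref{Prop_Dual_Of_Dna} and~\ref{Prop_Dual_Of_Dnb}, the cones $\mathcal{C}_n^A$ and $\mathcal{C}_n^B$ have the same distributions as $(\Dna)^\circ$ and $(\Dnb)^\circ$, respectively. Hence it suffices to express $U_j(C^\circ)$ through the quermassintegrals of $C$ for a generic realization $C$ of $\Dna$ (respectively $\Dnb$) and then take expectations.

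The key tool I would establish is the duality: for every polyhedral cone $C\subset\R^d$ that is not a linear subspace,
\begin{align*}
U_j(C)+U_{d-j}(C^\circ)=\frac{1}{2},\qquad j=0,\ldots,d.
\end{align*}
To prove it I would combine three standard facts about conic intrinsic volumes. First, inverting the conic Crofton formula already stated above gives $U_m(C)=\sum_{i\ge 0}\upsilon_{m+1+2i}(C)$, i.e.\ $U_m$ is the sum of the intrinsic volumes whose index exceeds $m$ and has the parity opposite to $m$. Second, the duality of conic intrinsic volumes $\upsilon_k(C^\circ)=\upsilon_{d-k}(C)$ turns $U_{d-j}(C^\circ)$ into $\sum_{i\ge 0}\upsilon_{j-1-2i}(C)$. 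Adding the two expansions, a short parity bookkeeping shows that $U_j(C)+U_{d-j}(C^\circ)$ equals the sum of all $\upsilon_m(C)$ with $m\not\equiv j\pmod 2$. Third, since $C$ is not a linear subspace, the conic Gauss--Bonnet relation $\sum_{m=0}^d(-1)^m\upsilon_m(C)=0$ together with $\sum_{m=0}^d\upsilon_m(C)=1$ forces the even-indexed and the odd-indexed intrinsic volumes to sum to $\tfrac12$ each; hence the one-parity sum above equals $\tfrac12$, which is the claim.

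With the duality in hand, I would note that under~\ref{label_GP1_A} (respectively~\ref{label_GP1}) the Weyl cone $\Dna$ (respectively $\Dnb$) is a.s.\ a $d$-dimensional cone different from $\R^d$, in particular a.s.\ not a linear subspace, so the relation applies to $C=\Dna$. Replacing $C$ by $C^\circ$ and using $(C^\circ)^\circ=C$ it reads $U_j(C^\circ)=\tfrac12-U_{d-j}(C)$, whence
\begin{align*}
\E U_j(\mathcal{C}_n^A)=\E U_j\big((\Dna)^\circ\big)=\frac12-\E U_{d-j}(\Dna)=\frac12-\frac{D^A(n,j)}{2D^A(n,d)}=\frac{D^A(n,d)-D^A(n,j)}{2D^A(n,d)},
\end{align*}
where the third equality is Corollary~\ref{cor:quermass_weyl_cones} evaluated at index $d-j\in\{0,\ldots,d-1\}$, which is exactly where that corollary applies and matches the range $j=1,\ldots,d$ asserted here. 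The $B_n$-case is verbatim the same, with $D^A$ replaced by $D^B$ and $\Dna,\mathcal{C}_n^A$ replaced by $\Dnb,\mathcal{C}_n^B$.

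The only genuine obstacle is the pointwise quermassintegral duality; everything else is bookkeeping plus an appeal to the already-proved Corollary~\ref{cor:quermass_weyl_cones}. I expect the cleanest route to that duality to be the intrinsic-volume computation sketched above, but it could equally be obtained from the geometric \emph{polarity alternative}: for a cone $C$ and a subspace $L$ in general position exactly one of $C\cap L\neq\{0\}$ and $C^\circ\cap L^\perp\neq\{0\}$ occurs. Integrating this indicator identity over $L\in G(d,d-j)$, and using that $L\mapsto L^\perp$ pushes the invariant measure $\nu_{d-j}$ forward to $\nu_j$, yields $2U_j(C)+2U_{d-j}(C^\circ)=1$ directly.
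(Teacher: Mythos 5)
Your proposal is correct and follows essentially the same route as the paper: both reduce the claim to the duality relation $U_j(C)+U_{d-j}(C^\circ)=\tfrac12$ for cones that are not linear subspaces, combined with Propositions~\ref{Prop_Dual_Of_Dna} and~\ref{Prop_Dual_Of_Dnb} and Corollary~\ref{cor:quermass_weyl_cones}. The only difference is that the paper simply cites this duality from Hug and Schneider (Eq.~(5) there), whereas you supply a correct self-contained derivation of it via the conic Crofton formula, the duality of conic intrinsic volumes, and the Gauss--Bonnet relation.
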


\begin{proof}
This follows directly from the duality relation $U_j(C)+U_{d-j}(C^\circ)=\frac 12$ which holds for any cone $C\subset\R^d$ which is not a linear subspace, see~\cite[Eq.~(5)]{HugSchneider2016}. Note that since the random Weyl cones $\Dna$ and $\Dnb$ are a.s.~$d$-dimensional (but not equal to $\R^d$), their dual cones $\mathcal C_n^A$ and $\mathcal C_n^B$, respectively, are a.s.~pointed. This means that $\{0\}$ is the only linear subspace they contain, in particular, $\mathcal C_n^A$ and $\mathcal C_n^B$ are a.s.\ no linear subspaces.
\end{proof}

\begin{kor}
The expected conic intrinsic volumes of $\mathcal{C}_n^A$ and $\mathcal C_n^B$ are given by
\begin{align*}
\E\upsilon_d(\mathcal{C}_n^A)&=\frac{D^A(n,d)-D^A(n,d-1)}{2D^A(n,d)},\quad\E \upsilon_j(\mathcal{C}_n^A)=	\stirling n{n-j}\frac{1}{D^A(n,d)}, \quad j=0,\ldots,d-1,\\
\E\upsilon_d(\mathcal{C}_n^B)&=\frac{D^B(n,d)-D^B(n,d-1)}{2D^B(n,d)},\quad\E \upsilon_j(\mathcal{C}_n^B)=	\frac{\stirlingb n{n-j}}{D^B(n,d)}, \quad j=0,\ldots,d-1.
\end{align*}
\end{kor}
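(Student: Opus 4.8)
The plan is to obtain every formula by duality from the intrinsic volumes of the \emph{primal} Weyl cones already computed in Corollary~\ref{cor:intr_vol_weyl_random_cones}, so that no new geometric computation is needed. The single external ingredient is the duality relation for conic intrinsic volumes,
\begin{align*}
\upsilon_j(C)=\upsilon_{d-j}(C^\circ),\quad j=0,\ldots,d,
\end{align*}
which holds for every polyhedral cone $C\subset\R^d$; see~\cite[Section~6.5]{Schneider2008}. I would first record that by Proposition~\ref{Prop_Dual_Of_Dna} the cone $\mathcal{C}_n^A$ has the same distribution as $(\Dna)^\circ$, and that $\Dna$ is a.s.\ $d$-dimensional (but not all of $\R^d$), so that $(\Dna)^\circ$ is a.s.\ pointed and the duality relation applies without exceptional cases. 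This degeneracy check is exactly the observation already made in the proof of the preceding corollary on quermassintegrals, so it can be imported verbatim.

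The main step is then to apply the duality pointwise and take expectations:
\begin{align*}
\E\upsilon_j(\mathcal{C}_n^A)=\E\upsilon_j\big((\Dna)^\circ\big)=\E\upsilon_{d-j}(\Dna),\quad j=0,\ldots,d.
\end{align*}
For $j=d$ this reduces to $\E\upsilon_d(\mathcal{C}_n^A)=\E\upsilon_0(\Dna)$, which by Corollary~\ref{cor:intr_vol_weyl_random_cones} equals $(D^A(n,d)-D^A(n,d-1))/(2D^A(n,d))$, giving the first claimed formula directly. For $j=0,\ldots,d-1$ the index $d-j$ runs through $1,\ldots,d$, so the second formula of Corollary~\ref{cor:intr_vol_weyl_random_cones} applies and yields
\begin{align*}
\E\upsilon_j(\mathcal{C}_n^A)=\frac{\stirling n{n-d+(d-j)}}{D^A(n,d)}=\frac{\stirling n{n-j}}{D^A(n,d)},
\end{align*}
which is the asserted expression after simplifying the lower Stirling index.

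The $B_n$-case is handled identically: I would invoke Proposition~\ref{Prop_Dual_Of_Dnb} in place of Proposition~\ref{Prop_Dual_Of_Dna}, use the same conic duality $\upsilon_j(\mathcal{C}_n^B)=\upsilon_{d-j}(\Dnb)$ in distribution, and substitute the $B_n$-part of Corollary~\ref{cor:intr_vol_weyl_random_cones}, replacing $\stirling n{\,\cdot\,}$ by $\stirlingb n{\,\cdot\,}$ and $D^A$ by $D^B$ throughout. I do not expect a genuine obstacle here, since the result is a formal consequence of duality; the only points demanding care are the bookkeeping of the index reflection $j\mapsto d-j$ and the (already available) verification that the dual cones are a.s.\ non-degenerate, so that the intrinsic-volume duality holds term by term and survives the exchange of duality with expectation.
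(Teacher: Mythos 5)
Your proposal is correct and follows exactly the paper's route: the paper likewise derives the corollary directly from the duality relation $\upsilon_j(C)=\upsilon_{d-j}(C^\circ)$ combined with Propositions~\ref{Prop_Dual_Of_Dna} and~\ref{Prop_Dual_Of_Dnb} and the intrinsic-volume formulas of Corollary~\ref{cor:intr_vol_weyl_random_cones}. Your index bookkeeping $\stirling{n}{n-d+(d-j)}=\stirling{n}{n-j}$ and the non-degeneracy check are both right; the paper merely states the argument more tersely.
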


\begin{proof}
This follows directly from the well-known duality relation $\upsilon_j(C)=\upsilon_{d-j}(C^\circ)$ for any cone $C\subset\R^d$, see, for example,~\cite[Eq.~(2.9)]{Amelunxen2017}.
\end{proof}

\subsection{Remarks}\label{sec:remarks}
The papers~\cite{KVZ15} and~\cite{KVZ17} studied convex hulls of the $d$-dimensional random walks (and bridges) of the form $Y_1, Y_1+Y_2,\ldots, Y_1+\ldots+Y_n$, where $Y_1,\ldots,Y_n$ are random vectors satisfying certain exchangeability and general position conditions; see also~\cite{vysotsky_zaporozhets}. The main results of these works are formulas for the probability that such a convex hull contains the origin, as well as for the expected number of $j$-faces of the convex hull. These formulas (which are distribution-free) also involve the numbers $D^A (n,d)$ and $D^B(n,d)$.
Above, we described the dual cones of $\Dna$, respectively $\Dnb$. Under natural exchangeability assumptions on $Y_1,\ldots,Y_n$, these turned out to be the positive hulls of $Y_1-Y_2, \ldots, Y_{n-1}-Y_n, Y_n$, respectively $Y_{1}-Y_2,\ldots,Y_{n-1}-Y_n$, (conditioned on the event that this positive hull is not $\R^d$). For these positive hulls, we stated formulas for the expected values of some geometric functionals, thus showing that the differences of exchangeable random vectors also exhibit a distribution-free behavior.

Let us finally mention that it is possible to extend the results of the present paper to Weyl tessellations corresponding to the reflection groups of the product type $B_{n_1}\times \ldots \times B_{n_r} \times A_{k_1-1}\times\ldots \times A_{k_l-1}$. The conical tessellations of product type are just unions of the tessellations corresponding to the individual  factors.
In particular, Weyl tessellations of type $B_1^n$ coincide with the tessellations studied by Cover and Efron~\cite{CoverEfron_paper} and  Hug and Schneider~\cite{HugSchneider2016}. Thus, their results become special cases of this more general setting. We refrain from stating the results in the product type setting since they require introducing heavy notation. The above observation explains the similarity
in the formulas for the expected geometrical functionals of Weyl random cones to the respective results on the Schl\"afli and Cover-Efron random cones which Hug and Schneider stated and proved in~\cite[Section 4]{HugSchneider2016}.

\subsection{Outline of the paper}
The rest of the paper is mostly devoted to the proofs of the results stated in Section~\ref{sec:main_results}.
We will not prove the results in the order in which they are stated above.
In Section~\ref{sec:proof_faces_multiplicity} we state and prove a formula which will be the key in proving most of the other results. In Section~\ref{sec:proof_weyl_tess} we will prove the results on the number of faces (Theorems~\ref{Theorem_Number_WeylFaces_Tessellation_An-1} and~\ref{Theorem_Number_WeylFaces_Tessellation_Bn}) in a Weyl tessellation of the respective type. Section~\ref{sec:proofs_size_funct} contains the proofs of the formulas for the expected size functionals of Weyl random cones (Theorems~\ref{Theorem_sizefunction_Dn_An-1} and~\ref{Theorem_sizefunction_Dn}), while Section~\ref{Subsection_Proof_GeneralPosition} is dedicated to proving the results concerning general position.




\section{Faces of the Weyl tessellations counted with multiplicity}\label{sec:proof_faces_multiplicity}

Most results from Section~\ref{sec:main_results} heavily rely on (or directly follow from) the following propositions. They give a formula for the number of $k$-faces in the Weyl tessellations, where each face is taken with the multiplicity equal to the number of Weyl cones containing it.

\begin{prop}\label{prop:faces_with_multiplicity_A}
Let $y_1,\ldots,y_n\in\R^d$ satisfy assumption~\ref{label_GP1_A}. Then, for all $k\in \{1,\ldots,d\}$, it holds that
\begin{align*}
&\sum_{F\in\F^A_k(y_1,\ldots,y_n)}\:\sum_{C\in\mathcal W^A(y_1,\ldots,y_n)}\1_{\{F\subseteq C\}}=\binom{n-1}{d-k}\frac{n!}{(n-d+k)!}D^A(n-d+k,k).
\end{align*}
\end{prop}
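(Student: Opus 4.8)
The plan is to interchange the order of summation, reduce everything to counting the $k$-faces of the individual Weyl cones $D^A_\sigma$, and then recognize the resulting count as the product of a purely combinatorial factor with a smaller Weyl-tessellation count to which Theorem~\ref{theorem:number_cones_A} applies. Swapping the two sums gives $\sum_{F\in\F_k^A}\sum_{C\in\mathcal W^A}\1_{\{F\subseteq C\}}=\sum_{C\in\mathcal W^A}f_k(C)$, because in a fan the $k$-faces of the tessellation contained in a maximal cone $C$ are precisely the $k$-faces of $C$. Since the maximal cones are exactly the sets $D^A_\sigma\neq\{0\}$ with $\sigma\in\mathcal S_n$, it suffices to evaluate $\sum_{\sigma\in\mathcal S_n}f_k(D^A_\sigma)$, where cones equal to $\{0\}$ contribute nothing for $k\geq 1$.

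Next I would describe the $k$-faces of a single cone $D^A_\sigma=\{v:\langle v,y_{\sigma(1)}\rangle\le\dots\le\langle v,y_{\sigma(n)}\rangle\}$ combinatorially. Turning $d-k$ of the $n-1$ defining inequalities into equalities merges the ordered indices into an ordered set partition $(B_1,\dots,B_m)$ into $m=n-d+k$ consecutive blocks, and the associated face is $\{v:\langle v,y_i\rangle \text{ is constant on each } B_a, \text{ with these constants nondecreasing in } a\}$. Assumption~\ref{label_GP1_A} guarantees that the $d-k$ within-block constraints $\langle v,y_{\sigma(i+1)}-y_{\sigma(i)}\rangle=0$ are linearly independent, so each such face has dimension exactly $k$, and every $k$-face of $D^A_\sigma$ arises uniquely in this way. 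The crucial observation is that the face depends only on the ordered partition $(B_1,\dots,B_m)$ and not on the particular $\sigma$ refining it, while the number of $\sigma\in\mathcal S_n$ refining a given ordered partition equals $\prod_a|B_a|!$. Hence $\sum_{\sigma}f_k(D^A_\sigma)=\sum\prod_a|B_a|!$, the sum running over all ordered partitions into $m$ blocks whose associated face is different from $\{0\}$.

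I would then group this sum by the \emph{unordered} partition $\pi=\{B_1,\dots,B_m\}$. On the subspace $W_\pi:=\{v\in\R^d:\langle v,y_i-y_j\rangle=0 \text{ whenever } i,j \text{ lie in a common block}\}\cong\R^k$, the $m$ linear functionals $\ell_a(v):=\langle v,y_i\rangle$ (for any $i\in B_a$) generate exactly a Weyl tessellation of type $A_{m-1}$ in dimension $k$: a given ordering of the blocks yields a nonzero face of $D^A_\sigma$ if and only if it yields a nonzero cone of this induced tessellation. Provided the induced functionals $\ell_1,\dots,\ell_m$ again satisfy assumption~\ref{label_GP1_A} in dimension $k$, Theorem~\ref{theorem:number_cones_A} shows that the number of block-orderings producing a nonzero face equals $D^A(m,k)=D^A(n-d+k,k)$, independently of $\pi$. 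Since $\prod_a|B_a|!$ depends only on the block sizes, this gives $\sum_{\sigma}f_k(D^A_\sigma)=D^A(n-d+k,k)\sum_{\pi}\prod_{B\in\pi}|B|!$, where $\pi$ runs over all partitions of $\{1,\dots,n\}$ into $m$ blocks.

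Finally, the remaining sum $\sum_{\pi}\prod_{B\in\pi}|B|!$ is the unsigned Lah number $L(n,m)=\binom{n-1}{m-1}\frac{n!}{m!}$, counting partitions of $\{1,\dots,n\}$ into $m$ nonempty linearly ordered blocks; with $m=n-d+k$ one has $\binom{n-1}{m-1}=\binom{n-1}{d-k}$ and $\frac{n!}{m!}=\frac{n!}{(n-d+k)!}$, which reproduces the claimed formula. I expect the main obstacle to be the general-position descent in the previous step: one must verify that assumption~\ref{label_GP1_A} for $y_1,\dots,y_n$ in $\R^d$ forces the induced functionals $\ell_1,\dots,\ell_m$ on each $W_\pi$ to be in general position. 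This amounts to the linear independence of certain families of differences $y_i-y_j$ (within-block together with between-block), which are indexed by forests rather than by a single path as in~\ref{label_GP1_A}; it is cleanest to argue through the equivalent condition~\ref{label_GP2_A} of Theorem~\ref{Theorem_Aequivalenz_A1_A2}, and this reduction is where the care is needed.
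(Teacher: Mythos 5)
Your reduction is clean and the combinatorics check out: the swap $\sum_F\sum_C\1_{\{F\subseteq C\}}=\sum_{\sigma}f_k(D^A_\sigma)$, the parametrization of candidate $k$-faces by ordered partitions into $m=n-d+k$ blocks with multiplicity $\prod_a|B_a|!$, the identification of the block-orderings yielding nonzero faces with the chambers of an induced type-$A_{m-1}$ tessellation on the $k$-dimensional subspace $W_\pi$, and the Lah-number evaluation $\sum_\pi\prod_B|B|!=\binom{n-1}{d-k}\frac{n!}{(n-d+k)!}$ all agree with the paper (the subspace/induced-tessellation step is essentially how the paper proves Theorem~\ref{Theorem_Number_WeylFaces_Tessellation_An-1}, and the general-position descent you flag is verified there, so that worry is resolvable).

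The genuine gap is circularity. Your argument reduces Proposition~\ref{prop:faces_with_multiplicity_A} for parameters $(n,d,k)$ to Theorem~\ref{theorem:number_cones_A} for parameters $(n-d+k,k)$; but in the paper Theorem~\ref{theorem:number_cones_A} is \emph{not} proved independently --- it is deduced from Proposition~\ref{prop:faces_with_multiplicity_A} by setting $k=d$. For $k=d$ your scheme has $m=n$, a single partition into singletons, and the statement you need from Theorem~\ref{theorem:number_cones_A} is literally the statement you are trying to prove, so the base case of your reduction is vacuous. The substantive content that closes this loop in the paper is entirely absent from your proposal: namely Lemma~\ref{Lem_Connection_Faces_Chamber_An-1}, which translates ``$F^A_\sigma(l_1,\ldots,l_{n-d+k-1})=\{0\}$'' into ``$C^A_\sigma(l_1,\ldots,l_{n-d+k-1})\cap L^\perp=\{0\}$'' for the subspace $L^\perp\subset\R^n$ attached to $y_1,\ldots,y_n$ via condition~\ref{label_GP2_A} (this uses the duality/Farkas-type Lemma~\ref{Lemma_Cone_Subspace_LinSp}), together with the imported count of Theorem~\ref{Theorem_Weyl_Faces_Intersected_Type_An-1} from~\cite{Kabluchko2019} for the number of $(n-d+k)$-faces of the chambers of the reflection arrangement $\A(A_{n-1})$ in $\R^n$ meeting a generic $d$-dimensional subspace. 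Unless you supply an independent proof of the chamber count $D^A(n,d)$ (e.g.\ via such a lifting to $\R^n$, or via the characteristic polynomial of the restricted braid arrangement), your argument does not prove the proposition.
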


\begin{prop}\label{prop:faces_with_multiplicity_B}
Let $y_1,\ldots,y_n\in\R^d$ satisfy assumption~\ref{label_GP1}.  Then, for all $k\in\{1,\ldots,d\}$, it holds that
\begin{align*}
&\sum_{F\in\F^B_k(y_1,\ldots,y_n)}\:\sum_{C\in\mathcal W^B(y_1,\ldots,y_n)}\1_{\{F\subseteq C\}}=2^{d-k}\binom{n}{d-k}\frac{n!}{(n-d+k)!}D^B(n-d+k,k).
\end{align*}
\end{prop}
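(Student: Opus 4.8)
The plan is to realise the Weyl tessellation as a slice of the full type-$B_n$ reflection fan by a generic $d$-dimensional subspace, to reduce the weighted $k$-face count to a count of lower-rank Weyl chambers met by that subspace, and to aggregate the result by a Lah-type identity; the type-$A_{n-1}$ statement (Proposition~\ref{prop:faces_with_multiplicity_A}) is the cleaner analogue obtained by the same scheme with all signs and the zero block deleted. First I would rewrite the left-hand side: since the faces of $\mathcal{W}^B(y_1,\ldots,y_n)$ form a polyhedral fan, a $k$-face $F$ of the tessellation with $F\subseteq C$ is automatically a face of $C$, so the double sum equals $\sum_{C}f_k(C)=\sum_{\eps,\sigma}f_k(D^B_{\eps,\sigma})$, the $\{0\}$-cones contributing nothing for $k\ge 1$. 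Next, consider $T\colon\R^d\to\R^n$, $T(v)=(\langle v,y_1\rangle,\ldots,\langle v,y_n\rangle)$. Assumption~\ref{label_GP1} forces $y_1,\ldots,y_n$ to span $\R^d$, so $T$ is injective with a $d$-dimensional image $L\subseteq\R^n$, and
\begin{align*}
D^B_{\eps,\sigma}=T^{-1}\big(C_{\eps,\sigma}\cap L\big),\qquad C_{\eps,\sigma}:=\{x\in\R^n:\eps_1x_{\sigma(1)}\le\ldots\le\eps_nx_{\sigma(n)}\le 0\}.
\end{align*}
Thus the whole tessellation is the $T^{-1}$-image of the slice of the type-$B_n$ Weyl fan by $L$.

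The key reduction uses the standard description of the faces of a cone cut by a generic subspace: for $L$ in general position the $k$-faces of $C_{\eps,\sigma}\cap L$ are exactly the sets $G\cap L$, where $G$ runs over the $(n-d+k)$-faces of $C_{\eps,\sigma}$ whose relative interior is met by $L$. Writing $m:=n-d+k$ and letting $\mu^B(G)$ be the number of type-$B_n$ chambers containing a face $G$, summing over $(\eps,\sigma)$ and interchanging summations yields
\begin{align*}
\sum_{\eps,\sigma}f_k(D^B_{\eps,\sigma})=\sum_{\substack{G:\,\dim G=m\\ L\cap\relint G\neq\emptyset}}\mu^B(G),
\end{align*}
the outer sum ranging over all $m$-dimensional faces of the type-$B_n$ fan.

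I would then stratify these faces by the flat $E_P:=\lin G$ they span. Such a flat is indexed by a zero block $Z\subseteq\{1,\ldots,n\}$ together with a partition of its complement into $m$ signed blocks $B_1,\ldots,B_m$ (a block of size $s$ carrying $2^{s-1}$ relative sign patterns), and one checks that $\dim E_P=m$ and that the arrangement induced on $E_P\cong\R^m$ by the type-$B_n$ hyperplanes is again of type $B_m$ (the coordinate hyperplanes $x_i^\perp$ induce every wall $t_a=0$). All faces $G$ spanning a fixed $E_P$ share the multiplicity $\mu^B(G)=2^{|Z|}|Z|!\prod_{a=1}^m|B_a|!$—the zero coordinates may be freely signed and ordered and each block freely ordered—while those $G$ whose open face meets $L$ are exactly the type-$B_m$ chambers of $E_P$ met by the $k$-dimensional subspace $L\cap E_P$; by the chamber-counting results of~\cite{KVZ17,KVZ15} their number is $D^B(m,k)$. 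Hence
\begin{align*}
\sum_{\eps,\sigma}f_k(D^B_{\eps,\sigma})=D^B(m,k)\sum_{(Z;\,B_1,\ldots,B_m)}2^{|Z|}|Z|!\prod_{a=1}^m|B_a|!,
\end{align*}
and a short computation—summing over the zero block and invoking the Lah number identity for the remaining $m$ blocks—evaluates the combinatorial factor as $2^{n-m}\binom{n}{m}n!/m!$. Since $d-k=n-m$ this is exactly the claimed constant; the type-$A_{n-1}$ computation is identical but uses ordinary set partitions and the plain Lah number $\binom{n-1}{m-1}n!/m!=\binom{n-1}{d-k}n!/(n-d+k)!$.

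Two steps require genuine care. First, one must show that assumption~\ref{label_GP1}, via its general-position reformulation, guarantees that every restricted subspace $L\cap E_P$ is generic with respect to the induced type-$B_m$ arrangement, so that the per-flat count is precisely $D^B(m,k)$ and no degenerate intersections occur. The main obstacle, however, is the sign-and-zero-block bookkeeping: the $2^m m!$ sign-and-order choices of the $m$ nonzero blocks must be absorbed into $D^B(m,k)$, while the $2^{|Z|}$ signs of the zero block and the $2^{|B_a|-1}$ relative signs within each block must be counted in the flat sum, and it is easy to either double count or omit these. The type-$A$ case, having neither signs nor a zero block, serves as a reliable template for getting the type-$B$ accounting right.
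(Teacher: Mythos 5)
Your proposal is correct and shares its backbone with the paper's argument: both realise $\mathcal W^B(y_1,\ldots,y_n)$ as the preimage under $v\mapsto(\langle v,y_1\rangle,\ldots,\langle v,y_n\rangle)$ of the slice of the type-$B_n$ reflection fan by the $d$-dimensional image subspace (the paper's $L^\perp$), and both convert the multiplicity-weighted $k$-face count into the number of $(n-d+k)$-faces of type-$B_n$ Weyl chambers meeting that subspace non-trivially. The difference lies in the last step. The paper at that point simply cites \cite[Theorem 2.1]{Kabluchko2019} (Theorem~\ref{Theorem_Weyl_Faces_Intersected}), which evaluates $\sum_{\eps,\sigma}\sum_{F\in\F_{n-d+k}(C^B_{\eps,\sigma})}\1_{\{F\cap L^\perp\neq\{0\}\}}$ in closed form; you instead inline a proof of that count by stratifying the $m$-faces ($m=n-d+k$) over the flats of the $B_n$ intersection lattice, observing that the induced arrangement on each $m$-flat is again of type $B_m$, invoking only the \emph{chamber}-counting result of \cite{KVZ15,KVZ17} to get $D^B(m,k)$ per flat, and finishing with the signed Lah-number identity $\sum 2^{|Z|}|Z|!\prod_a|B_a|!=2^{n-m}\binom{n}{m}n!/m!$ (which I have checked: the hockey-stick sum $\sum_{z}\binom{n-z-1}{m-1}=\binom{n}{m}$ gives exactly the stated constant). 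This buys you a more self-contained derivation from a weaker input; the cost is that you must carry out exactly the flat-by-flat general-position and multiplicity bookkeeping that the citation would have absorbed. Two of your "standard" steps are precisely where the paper invests most of its effort and should not be waved through: the equivalence between assumption~\ref{label_GP1} on the $y_i$ and genericity of $L$ relative to $\cA(B_n)$ is Theorem~\ref{Theorem_Aequivalenz_B1_B2}, and the statement that the $k$-faces of $C_{\eps,\sigma}\cap L$ are exactly the sets $G\cap L$ over $m$-faces $G$ with $\relint G\cap L\neq\emptyset$, together with the injectivity of $G\mapsto G\cap L$, is the content of Lemma~\ref{Lem_Connection_Face_WeylFace}, Lemma~\ref{Lemma_General_Position_Boundary_Faces} and Proposition~\ref{Prop uniq}, proved there via a Farkas-type duality argument (Lemma~\ref{Lemma_Cone_Subspace_LinSp}). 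With those supplied, your accounting of the zero block and of the $2^{|B_a|-1}$ relative signs is consistent (sanity check: summing $2^mm!\cdot 2^{|Z|}|Z|!\prod_a|B_a|!$ over flats returns $2^nn!\binom{n}{m}$, the total number of chamber--face incidences), and the argument closes correctly.
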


Firstly, we note that the formulas for the number of cones in the Weyl tessellation as well as the formulas for the expected number of $k$-faces of the Weyl random cones $\Dna$ and $\Dnb$ are direct consequences of Propositions~\ref{prop:faces_with_multiplicity_A} and~\ref{prop:faces_with_multiplicity_B}.

\begin{proof}[Proof of Theorems~\ref{theorem:number_cones_A} and~\ref{Theorem_Number_Weyl_Cones} assuming Propositions~\ref{prop:faces_with_multiplicity_A} and~\ref{prop:faces_with_multiplicity_B}]
Both theorems follow directly from the respective Propositions~\ref{prop:faces_with_multiplicity_A}
and~\ref{prop:faces_with_multiplicity_B} with $k=d$.
\end{proof}

\begin{proof}[Proof of Theorems~\ref{theorem:exp_k-faces_Weyl_cones_A} and~\ref{theorem:exp_k-faces_Weyl_cones_B} assuming Propositions~\ref{prop:faces_with_multiplicity_A} and~\ref{prop:faces_with_multiplicity_B}]
 Both follow directly from the respective Propositions~\ref{prop:faces_with_multiplicity_A} and~\ref{prop:faces_with_multiplicity_B} using that the total number of cones in the Weyl tessellations is given by $D^A(n,d)$ and $D^B(n,d)$ a.s.\ (by Theorems~\ref{theorem:number_cones_A} and~\ref{Theorem_Number_Weyl_Cones}), respectively, and additionally, that $\Dna$ and $\Dnb$ are
chosen uniformly among all cones of the respective Weyl tessellation.
\end{proof}

The rest of this section is dedicated to proving Propositions~\ref{prop:faces_with_multiplicity_A}
and~\ref{prop:faces_with_multiplicity_B}.
Most of the time we will deal with the $B_n$-case since the $A_{n-1}$-case follows in a similar but somewhat simpler way.
In Section~\ref{sec:preliminary_comments}, we introduce the necessary notation and terminology.
Section~\ref{sec:char_Weyl_faces} gives a characterization of the $k$-faces of the Weyl tessellations.
In Section~\ref{sec:proof_faces_multiplicity_reduction}  we reduce Propositions~\ref{prop:faces_with_multiplicity_A} and~\ref{prop:faces_with_multiplicity_B} to a problem of counting the number of $k$-faces of Weyl chambers
intersected by a linear subspace.
Finally, in Section~\ref{sec:proof_faces_multiplicity_final_part} we complete the proof of
Propositions~\ref{prop:faces_with_multiplicity_A} and~\ref{prop:faces_with_multiplicity_B}.

\subsection{Preliminary comments and notation}\label{sec:preliminary_comments}

We collect some facts and definitions concerning polyhedral cones, conical tessellations  and general position. Some of these definitions and results were already used in Section~\ref{sec:main_results}, but we want to give formal definitions since they
are necessary to understand the subsequent proofs.

\subsubsection*{Polyhedral cones}
A \textit{polyhedral cone} (or, for simplicity, just a \textit{cone}) $C\subseteq\R^d$ is a finite intersection of closed half-spaces whose boundaries pass through the origin.
The dimension of a cone $C$ is defined as the dimension of its linear hull, i.e.~$\dim C=\dim\lin(C)$.
The \textit{faces} of $C$ are obtained by replacing some of the half-spaces, whose intersection defines the polyhedral cone, by their boundaries and taking the intersection. We denote by $\F_k(C)$ the set of all $k$-dimensional faces of $C$.
Recall that $f_k(C)=\#\F_k(C)$ denotes the number of $k$-faces of $C$.
The
\textit{dual cone} of a cone $C\subseteq \R^d$,
defined by $C^\circ=\{x\in\R^d:\langle x,y\rangle\le 0\;\forall y\in C\}$.

Furthermore, let $\linsp(C)=C\cap (-C)$ denote the \textit{lineality space} of $C$, which is the linear subspace contained in  $C$ and having the  maximal possible dimension. Additionally, $\linsp(C)$ is contained in every face of $C$. A cone $C$ is \textit{pointed} if it does not contain a non-trivial linear subspace, i.e. if $\{0\}$ is a 0-dimensional face, or equivalently, if $\linsp(C)=\{0\}$.

For the hyperplanes and half-spaces induced by a vector $x\in\R^d\backslash\{0\}$ we use the notation
\begin{align*}
x^\perp=\{y\in\R^d:\langle y,x\rangle =0\},\quad x^-=\{y\in\R^d:\langle y,x\rangle\le 0\}.
\end{align*}

%
%




\subsubsection*{General position}
Throughout this paper we will use the notion of \textit{general position} in several different contexts. These notions are well-known in convex geometry, but to prevent confusion, we will provide a list which is complete in the sense of this manuscript.
\begin{enumerate}[ leftmargin=40pt]
\item[(i)] Vectors $x_1,\ldots,x_m\in\R^d$ are in \textit{general position} if for any $k\le d$ and $1\le i_1<\ldots<i_k\le m$ the set of vectors $x_{i_1},\ldots,x_{i_k}$ is linearly independent.
\item[(ii)] Linear hyperplanes $H_1,\ldots,H_m\subset \R^d$ are in \textit{general position} if
\begin{align*}
\dim(H_{i_1}\cap\ldots\cap H_{i_k})=d-k
\end{align*}
holds for any $k\le d$ and $1\le i_1<\ldots<i_k\le m$. Equivalently, the unit normal vectors of $H_1,\ldots,H_m$ are in general position.
\item[(iii)] Two linear subspaces $L$ and $L'$ in $\R^d$ are in \textit{general position} if
\begin{align*}
\dim (L\cap L')=\max\{0,\dim L + \dim L'-d\}.
\end{align*}
\item[(iv)] A linear subspace $L\subset \R^d$ is in \textit{general position} with respect to a hyperplane arrangement $\cA$ in $\R^d$ if $L$ is in general position with respect to $\bigcap_{H\in\cB}H$, for all finite subsets $\mathcal B\subset \cA$.
\end{enumerate}

\begin{bsp}\label{Remark_General_position_Uniform_Subspace}
Let $L$ be a random $k$-dimensional linear subspace with distribution $\nu_k$,
which is the unique rotation invariant Borel measure
on the Grassmannian $G(d,k)$ of all $k$-dimensional linear subspaces of $\R^d$.
Then, $L$ is in general position to any fixed linear subspace
and therefore also in general position to any fixed hyperplane arrangement, with probability $1$.  This follows directly from~\cite[Lemma 13.2.1]{Schneider2008}.
\end{bsp}

\begin{bem}
With the introduced terminology, assumptions~\ref{label_GP1_A}  and~\ref{label_GP1} can be rewritten
as follows:
\begin{enumerate}[ leftmargin=50pt]
\item[(A1)] For every  $\sigma\in\mathcal{S}_n$ the vectors $y_{\sigma(1)}-y_{\sigma(2)},y_{\sigma(2)}-y_{\sigma(3)},\ldots,y_{\sigma(n-1)}-y_{\sigma(n)}$ are in general position, and $n\ge d+1$.
\item[(B1)] For every $\varepsilon=(\varepsilon_1,\ldots,\varepsilon_n)\in\{\pm 1\}^n$ and $\sigma\in\mathcal{S}_n$ the vectors $\varepsilon_1y_{\sigma(1)}-\varepsilon_2y_{\sigma(2)},\varepsilon_2y_{\sigma(2)}-\varepsilon_3y_{\sigma(3)},\ldots,\varepsilon_{n-1}y_{\sigma(n-1)}-\varepsilon_ny_{\sigma(n)}, \varepsilon_ny_{\sigma(n)}$ are in general position, and $n\ge d$.
\end{enumerate}
\end{bem}

\subsubsection*{Conical tessellations}
We already saw in the introduction that a hyperplane arrangement $\A$ generates  a finite  set of cones. Denote by $\mathcal{R}(\mathcal{A})$ the set of open connected components ( ``regions" or ``chambers") of the complement $\R^d\setminus\bigcup_{H\in\mathcal{A}}H$ of the hyperplanes. Then, $\xbar{\mathcal{R}}(\mathcal{A}):=\{\bar{R}:R\in\mathcal{R}(\mathcal{A})\}$ is the \textit{conical tessellation} generated by $\mathcal{A}$, where $\bar{R}$ denotes the closure of $R$. The set of faces $\F(\xbar{\mathcal{R}}(\A))$ of $\xbar{\mathcal{R}}(\mathcal{A})$ is defined as the union of the sets of faces of all polyhedral cones $C\in\xbar{\mathcal{R}}(\mathcal{A})$.
Thus, the conical tessellation $\mathcal{\xbar{\mathcal{R}}}(\mathcal{A})$ generated by  a hyperplane arrangement $\A=\{H_1,\dots,H_n\}$ in $\R^d$ consists precisely of the cones of the form
\begin{align*}
\bigcap_{i=1}^n\eps_iH_i^-,\quad (\eps_1,\dots,\eps_n)\in\{\pm 1\}^n,
\end{align*}
which have non-empty interiors. Here, $H_i^-$ is the notation for one of the closed half-spaces bounded by $H_i$.

\begin{bem}\label{remark:cones_full-dim}
If we additionally assume $H_1,\dots,H_n$ to be in general position, then all cones different from $\{0\}$ of the form $\bigcap_{i=1}^n\eps_iH_i^-$ are already $d$-dimensional, see~\cite[(2.14)]{HugSchneider2016}, and thus, are the cones of the conical tessellation $\mathcal{\xbar{\mathcal{R}}}(\mathcal{A})$.
\end{bem}

For a linear subspace $L\subset\R^d$ in general position with respect to a hyperplane arrangement $\cA$ in $\R^d$, the \textit{induced hyperplane arrangement} in $L$ is defined by $\mathcal{A}|L:=\{H\cap L: H\in\mathcal{A}\}$. The general position assumption guarantees that the subspaces $H\cap L$ are pairwise distinct and indeed hyperplanes in $L$. It is easy to check that the cones of the conical tessellation in $L$ generated by $\cA|L$ are obtained by intersecting the cones of $\xbar{\mathcal{R}}(\A)$ with $L$:
$$
\xbar{\mathcal{R}}(\A|L) = \{C\cap L:C\in\xbar{\mathcal{R}}(\A),C\cap L\neq\{0\}\}.
$$
If $\cA=\{H_1,\ldots,H_n\}$ and $y_1,\ldots,y_n\in\R^d$ denote respective normal vectors of $H_1,\ldots,H_n$,  then their orthogonal projections on $L$, denoted by $\Pi_L(y_1),\ldots,\Pi_L(y_n)$, are normal vectors of $H_1\cap L,\ldots,H_n\cap L$ in $L$, respectively.

\subsection{Characterizing the faces of the Weyl tessellation}\label{sec:char_Weyl_faces}
In this section we will provide an explicit characterization of the $k$-faces of the Weyl tessellations
$\mathcal{W}^A(y_1,\ldots,y_n)$  and $\mathcal{W}^B(y_1,\ldots,y_n)$. Also, we will determine the number of cones in the tessellation that contain a given face.

We will concentrate on  the $B_n$-case. Let $y_1,\ldots,y_n\in\R^d$ satisfy~\ref{label_GP1}. As mentioned in the introduction, the cones in the Weyl tessellation $\mathcal{W}^B(y_1,\ldots,y_n)$ of type $B_n$
are the cones different from $\{0\}$ of the form
\begin{align*}
D^B_{\varepsilon,\,\sigma}:=\{v\in\R^d:\langle v,\varepsilon_1y_{\sigma(1)}\rangle\le \ldots\le \langle v,\varepsilon_ny_{\sigma(n)}\rangle\le 0\},\quad \varepsilon=(\varepsilon_1,\ldots,\varepsilon_n)\in\{\pm 1\}^n,\sigma\in\mathcal{S}_n.
\end{align*}
This follows directly from Remark~\ref{remark:cones_full-dim} and the definition of the generating hyperplane arrangement $\cA^B(y_1,\dots,y_n)$ in~\eqref{eq:Weyl_arrangement_B}. We will also refer to these cones as \textit{Weyl cones of type $B_n$} or just \textit{Weyl cones} when the type we are referring to is obvious from the context. Define the linear functionals $f_i:\R^d\to \R$ by
$$
f_i=f_i(v):=\langle v,y_i\rangle, \qquad  i=1,\ldots,n.
$$
We now introduce notation for cones which, as we will show in a moment, represent the $k$-faces of the Weyl tessellation.
For a collection of indices $1\le l_1<\ldots<l_ {n-d+k}\le n$, a vector of signs $\varepsilon\in\{\pm 1\}^n$ and a permutation $\sigma\in\mathcal{S}_n$, we define
\begin{align}\label{Eq_Rep_Faces_Bn}
F^B_{\eps,\,\sigma}(l_1,\ldots,l_{n-d+k}):=\{v\in\R^d&:\varepsilon_1f_{\sigma(1)}=\ldots=\varepsilon_{l_1}f_{\sigma(l_1)}\le \varepsilon_{l_1+1}f_{\sigma(l_1+1)}=\ldots =\varepsilon_{l_2}f_{\sigma(l_2)}\notag\\
&	\;\;\le\ldots\le \varepsilon_{l_{n-d+k-1}+1}f_{\sigma(l_{n-d+k-1}+1)}= \ldots=\varepsilon_{l_{n-d+k}}f_{\sigma(l_{n-d+k})}\\
&	\;\;\le f_{\sigma(l_{n-d+k}+1)}=\ldots=f_{\sigma(n)}=0\}.\notag
\end{align}
If $l_{n-d+k}=n$, no $f_i$'s are required to be 0 (but, as always, all $\eps_i f_{\sigma(i)}$ are required to be non-negative).  Recall that $\F^B_k(y_1,\ldots,y_n)$ denotes the set of $k$-faces of the Weyl tessellation $\cW^B(y_1,\ldots,y_n)$.

\begin{prop}\label{Prop_Char_WeylFaces}
Let $1\le k\le d$ and let $y_1,\ldots,y_n\in\R^d$ satisfy the general position assumption \ref{label_GP1}.
Then, the following claims hold:
\begin{enumerate}[label=(\roman*)]
\item For every $F\in\F^B_k(y_1,\ldots,y_n)$ there exists a collection of indices $1\le l_1<\ldots<l_ {n-d+k}\le n$, a vector of signs $\varepsilon\in\{\pm 1\}^n$ and a permutation $\sigma\in\mathcal{S}_n$, such that $F=F^B_{\eps,\,\sigma}(l_1,\ldots,l_{n-d+k})$.
\item Let $1\le l_1<\ldots<l_{n-d+k}\le n$ and $\eps\in\{\pm 1\}^n$, $\sigma\in\mathcal{S}_n$. If $F^B_{\eps,\,\sigma}(l_1,\ldots,l_{n-d+k})\neq \{0\}$, then $F^B_{\eps,\,\sigma}(l_1,\ldots,l_{n-d+k})\in \F^B_k(y_1,\ldots,y_n)$.
\end{enumerate}
\end{prop}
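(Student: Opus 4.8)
The plan is to reduce everything to the face structure of a single Weyl cone $D^B_{\varepsilon,\sigma}$ and to control dimensions using the general position hypothesis~\ref{label_GP1}. Write $u_i := \varepsilon_i y_{\sigma(i)} - \varepsilon_{i+1}y_{\sigma(i+1)}$ for $i=1,\ldots,n-1$ and $u_n := \varepsilon_n y_{\sigma(n)}$, so that $D^B_{\varepsilon,\sigma} = \{v\in\R^d : \langle v,u_i\rangle \le 0,\ i=1,\ldots,n\}$ is a full-dimensional cone (by Remark~\ref{remark:cones_full-dim}) and, by~\ref{label_GP1}, any $d$ of the vectors $u_1,\ldots,u_n$ are linearly independent. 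Every face of this cone is obtained by selecting an \emph{active set} $A\subseteq\{1,\ldots,n\}$ and imposing $\langle v,u_i\rangle=0$ for $i\in A$: activating $u_i$ for $i<n$ merges the consecutive functionals $\varepsilon_i f_{\sigma(i)}$ and $\varepsilon_{i+1}f_{\sigma(i+1)}$ into one block, while activating $u_n$ forces a vanishing tail. Thus the inactive indices, listed as $l_1<\ldots<l_{n-|A|}$, are precisely the block boundaries, and the resulting face is exactly $F^B_{\varepsilon,\sigma}(l_1,\ldots,l_{n-|A|})$. This identification is the bookkeeping backbone of both parts.

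For part (i), I would start from a $k$-face $F$ of the tessellation, which is a face of some Weyl cone $D^B_{\varepsilon,\sigma}$ with active set $A(F)=\{i:\langle v,u_i\rangle=0 \text{ for all } v\in F\}$. Evaluating at a relative interior point of $F$ yields the standard identity $\dim F = d-\operatorname{rank}\{u_i:i\in A(F)\}$. Since $F$ is a $k$-face with $k\ge 1$, the active normals cannot contain $d$ independent vectors (otherwise $\dim F=0$), so $|A(F)|<d$; general position then gives $\operatorname{rank}\{u_i:i\in A(F)\}=|A(F)|$, whence $|A(F)|=d-k$. Listing the $n-(d-k)=n-d+k$ inactive indices as $l_1<\ldots<l_{n-d+k}$ and invoking the identification above produces $F=F^B_{\varepsilon,\sigma}(l_1,\ldots,l_{n-d+k})$, as claimed.

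For part (ii), the content is a dimension statement: given $F=F^B_{\varepsilon,\sigma}(l_1,\ldots,l_{n-d+k})\neq\{0\}$ we have $|A|=d-k$, so $F$ lies in the subspace $W:=\bigcap_{i\in A}u_i^\perp$ of dimension $d-|A|=k$, and it remains to prove $\dim F=k$, i.e.\ that $F$ is full-dimensional in $W$. Here I would project the inactive normals into $W$, setting $\bar u_{l_j}:=\Pi_W(u_{l_j})$, so that $F=\{v\in W:\langle v,\bar u_{l_j}\rangle\le 0,\ j=1,\ldots,n-d+k\}$. The key observation is that any $k$ of the $\bar u_{l_j}$ are linearly independent in $W$: appending the corresponding $k$ normals to the $d-k$ active normals produces $d$ of the $u_i$, which are independent by~\ref{label_GP1}, and a relation $\sum_t c_t\bar u_{l_{j_t}}=0$ in $W$ would lift to a relation among those $d$ independent vectors (the discrepancy lying in $W^\perp=\lin\{u_i:i\in A\}$), forcing all $c_t=0$.

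The main obstacle, and the crux of part (ii), is to deduce from this general position of the projected normals that $F$ cannot have intermediate dimension, i.e.\ that $F\neq\{0\}$ already forces full-dimensionality. I would argue as follows. If $F$ is not full-dimensional in $W$, then by a Gordan-type alternative $0$ lies in $\operatorname{conv}\{\bar u_{l_1},\ldots,\bar u_{l_{n-d+k}}\}$; taking a convex representation of $0$ of minimal support and using that any $k$ of the $\bar u_{l_j}$ are independent, Carathéodory forces the support to have size exactly $k+1$. But $k+1$ vectors in $W\cong\R^k$ in general position that carry a positive linear dependence necessarily positively span $W$, and the corresponding constraints then force $F=\{0\}$, a contradiction. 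Hence $F\neq\{0\}$ implies $\dim F=k$, so $F$ is a $k$-dimensional face of $D^B_{\varepsilon,\sigma}$ and therefore $F\in\F^B_k(y_1,\ldots,y_n)$. The type $A_{n-1}$ counterpart (Proposition~\ref{prop:faces_with_multiplicity_A}'s underlying characterization) follows the same scheme with the vanishing-tail constraint $u_n$ removed, which is exactly why it is simpler.
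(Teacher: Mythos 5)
Your proof is correct, and its overall strategy matches the paper's: identify each face of $D^B_{\eps,\sigma}$ with the set of defining inequalities turned into equalities, and use~\ref{label_GP1} to control dimensions. The difference is in how the crux is handled, namely that a nonzero cone of the form $F^B_{\eps,\sigma}(l_1,\ldots,l_m)$ has exactly the expected dimension. The paper passes to the subspace cut out by the equalities, observes that the remaining hyperplanes induce an arrangement in general position there, and then invokes Remark~\ref{remark:cones_full-dim} (a cited fact from Hug--Schneider) to get full-dimensionality. You make this self-contained: for (i) the equality-set/affine-hull identity $\dim F = d-\rank\{u_i : i\in A(F)\}$ pins down $|A(F)|=d-k$ directly, and for (ii) you re-prove the ``nonzero implies full-dimensional'' fact from scratch via Gordan's alternative, a minimal-support/Carath\'eodory argument, and the positive-spanning criterion for vectors in general position --- which is precisely Lemma~\ref{Lemma_Positivkombination_Null} of the paper. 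Your route buys independence from the external citation at the cost of a slightly longer duality argument; note also that the Carath\'eodory step is dispensable, since once $0\in\conv\{\bar u_{l_1},\ldots,\bar u_{l_{n-d+k}}\}$ you can apply the positive-spanning lemma to the whole projected family (any $k$ of which you have shown to be independent) to conclude $\pos\{\bar u_{l_j}\}=W$ and hence $F=\{0\}$ in one stroke.
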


\begin{proof}
We start by proving (i). Let $F\in\F^B_k(y_1,\ldots,y_n)$. Then there exists a Weyl cone $D\in\mathcal W^B(y_1,\ldots,y_n)$, such that $D=D^B_{\eps,\sigma}$ and $F\in\F^B_k(D)$ for suitable $\eps\in\{\pm 1\}^n$ and $\sigma\in\mathcal{S}_n$. Every face of a polyhedral cone is obtained by replacing some of the half-spaces whose intersection defines the cone by their boundaries, or in this case, equivalently, replacing some of the inequalities in the defining condition of $D^B_{\eps,\sigma}$ by equalities. Thus, we can find a number $1\le m\le n$ and a collection of indices $1\le l_1<\ldots<l_m\le n$, such that $F=F^B_{\eps,\sigma}(l_1,\ldots,l_m)$. It is left to show that $m=n-d+k$. We have
\begin{align*}
F\subseteq L_m:=\{v\in\R^d&:\eps_1f_{\sigma(1)}=\ldots=\eps_{l_1}f_{\sigma(l_1)}, \ldots,\eps_{l_{m-1}+1}f_{\sigma(l_{m-1}+1)}=\ldots=\eps_{l_m}f_{\sigma(l_m)},\\
&\;\;\;f_{\sigma(l_m+1)}=\ldots=f_{\sigma(n)}=0\}.
\end{align*}
Note that $L_m\neq \{0\}$ is the intersection of $n-m$ hyperplanes in general position due to~\ref{label_GP1}, which yields $\dim L_m=d-n+m$. Now, we want to show that the dimensions of $F$ and $L_m$ are equal. Due to \ref{label_GP1}, $L_m$ is in general position to the arrangement
\begin{align*}
\mathcal{A}_m:=\big\{\big(\eps_{l_1}y_{\sigma(l_1)}-\eps_{l_1+1}y_{\sigma(l_1+1)}\big)^\perp,\ldots,\big(\eps_{l_m}y_{\sigma(l_m)}-\eps_{l_m+1}y_{\sigma(l_m+1)}\big)^\perp\big\},
\end{align*}
where $\eps_{l_m+1}=0$ for $l_m=n$, and additionally, the hyperplanes of $\A_m$ are itself in general position. Thus, the hyperplanes in the induced arrangement $\A_m|L_m=\{H\cap L_m:H\in\A_m\}$ are in general position in $L_m$, which follows directly from the definition. Since $F$ is obviously different from $\{0\}$ and of the form $\bigcap_{H\in\cA_m}\delta_HH^-\cap L_m$, for suitable signs $\delta_H\in\{\pm 1\}$ and half-spaces $H^-$, Remark~\ref{remark:cones_full-dim} implies that $F$ is full-dimensional in $L_m$ (in this case meaning that  $\dim F = d-n+m$) and already a cone of the induced conical tessellation $\xbar{R}(\A_m|L_m)$. In particular, we have $k=\dim F=d-n+m$, which completes the proof of part (i).


The proof of (ii) is similar. Obviously $F^B_{\eps,\sigma}(l_1,\ldots,l_{n-d+k})$ is a face of the Weyl tessellation $\mathcal{W}^B(y_1,\ldots,y_n)$ for all $1\le l_1<\ldots<l_{n-d+k}\le n$ and $\eps\in\{\pm 1\}^n$, $\sigma\in\mathcal{S}_n$. If not $\{0\}$,  it is already a $k$-dimensional face, due to the general position arguments presented above.
\end{proof}

Next we want to evaluate the number of Weyl cones $C\in\mathcal W^B(y_1,\ldots,y_n)$ that contain a fixed $k$-face $F=F^B_{\eps,\sigma}(l_1,\ldots,l_{n-d+k})\in\F_k^B(y_1,\ldots,y_n)$, which will be necessary for the proof of Proposition~\ref{prop:faces_with_multiplicity_B}. In order to avoid heavy notation, we consider an example from which the general case should become evident.

\begin{bsp}\label{Example_WeylCones_containing_face}
Consider the case $n=7$, $d=6$, $k=2$ and the face of the Weyl tessellation $\mathcal W^B(y_1,\ldots,y_7)$ in $\R^6$ given by
\begin{align*}
F=\{v\in\R^6:\underbrace{-f_5=f_2}_{\text{group 1}}\le \underbrace{-f_3}_{\text{group 2}}\le \underbrace{f_4=f_1}_{\text{group 3}}\le \underbrace{f_6=f_7=0}_{\text{group 4}}\}.
\end{align*}
Assume that $F\neq \{0\}$.
Under  assumption \ref{label_GP1}, the cone $F$ is a $2$-dimensional face of the Weyl tessellation of type $B_7$, due to Proposition~\ref{Prop_Char_WeylFaces}. In particular, it is a $2$-face of the Weyl cone
\begin{align}\label{Eq_Bsp_Cone}
\{v\in\R^6:-f_5\le f_2\le -f_3\le f_4\le f_1\le f_6\le f_7\le 0\}.
\end{align}
However, it is also a $2$-face of
\begin{align*}
\{v\in\R^6:f_2\le -f_5\le -f_3\le f_4\le f_1\le -f_7\le f_6\le 0\}
\end{align*}and, more generally of any cone obtained from (\ref{Eq_Bsp_Cone}) by permuting the $f$'s inside the groups $(-f_5,f_2),(-f_3),(f_4,f_1),(f_6,f_7)$, and by changing any number of signs in the \textit{last} group. The total number of cones obtained in this way is $2!1!2!2!2^2$.

The question arises, if there are any other Weyl cones that contain $F$. We will show that the answer is ``no''. Indeed, if we change the sign of any $f_i$ that is not in the last group (for example, $f_1$), we see that $F$ is not contained in this cone any longer, e.g.
\begin{align*}
F\nsubseteq\{v\in\R^6:-f_5\le f_2\le -f_3\le f_4\le -f_1\le f_6\le f_7\le 0\}.
\end{align*}
Otherwise, $F$ would be contained in the hyperplanes $\{f_1\le 0\}$ and $\{-f_1\le 0\}$, and thus, $F\subseteq \{f_1 = 0\}$. This implies
\begin{align*}
F = \{v\in\R^d:-f_5=f_2\le -f_3\le f_4=f_1=f_6= f_7=0\}.
\end{align*}
The number of groups in this representation is strictly smaller than in the original one because the former group of $f_1$ joined  the last group. In fact, the cone on the right-hand side is a $1$-face of the Weyl tessellation, due to Proposition~\ref{Prop_Char_WeylFaces} under the general position assumption \ref{label_GP1}. This means that any cone obtained by altering a sign of any $f_i$ outside the last group does not contain $F$.

From now on, we can consider only the Weyl cones in whose representations the signs of all $f_i$'s are the same as in the original representation of $F$, except for the $f_i$'s in the last group.  Take such a cone and assume that in its representation we have an inequality $\pm f_j\leq \pm f_i$, while in the representation of $F$ we have the converse inequality $\pm f_i\leq \pm f_j$.  For example take the cone
\begin{align*}
\{v\in\R^d:-f_5=f_2\le -f_3\le f_4=f_6\le f_1=f_7=0\}
\end{align*}
which satisfies $f_6 \leq f_1$, while in the representation of $F$ we have $f_1\leq f_6$. We claim that $F$ is not contained in this cone.
Indeed, otherwise, $F$ would be contained in the hyperplane $\{f_1=f_6\}$, which implies that $$
F=\{v\in\R^6: -f_5=f_2 \le -f_3 \le f_4=f_1 = f_6=f_7=0\}.
$$
Again, the number of groups in this representation is strictly smaller than in the original representation of $F$. In fact, the cone on the right-hand side  is a $1$-face, similarly to the previous case. This is a contradiction, since $F$ is $2$-dimensional. This means that there are no other Weyl cones containing $F$. Generalizing this argument,  yields the following proposition.
\end{bsp}

\begin{prop}\label{Prop_WeylKegel_die_Seiten_enthalten_B_n}
Let $1\le k\le d$ and let $y_1,\ldots,y_n\in\R^d$ satisfy the general position assumption \ref{label_GP1}. Then, each $k$-face $F^B_{\eps,\,\sigma}(l_1,\ldots,l_{n-d+k})\in\mathcal{F}^B_k(y_1,\ldots,y_n)$ belongs to exactly
\begin{align*}
l_1!(l_2-l_1)!\cdot\ldots\cdot(n-l_{n-d+k})!2^{n-l_{n-d+k}}
\end{align*}
cones $C\in\mathcal W^B(y_1,\ldots,y_n)$.
\end{prop}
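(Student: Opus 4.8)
The plan is to count the pairs $(\eps',\sigma')\in\{\pm1\}^n\times\mathcal S_n$ for which $F\subseteq D^B_{\eps',\sigma'}$, and then to argue that under \ref{label_GP1} this number equals the number of Weyl cones containing $F$. Indeed, since $F\neq\{0\}$, every cone $D^B_{\eps',\sigma'}\supseteq F$ is different from $\{0\}$ and hence full-dimensional by Remark~\ref{remark:cones_full-dim}, so it belongs to $\mathcal W^B(y_1,\ldots,y_n)$; moreover on the interior of such a cone one has the strict chain $\eps'_1f_{\sigma'(1)}<\ldots<\eps'_nf_{\sigma'(n)}<0$, which under general position recovers $(\eps',\sigma')$ uniquely. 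Thus distinct admissible pairs give distinct cones, and counting pairs is the same as counting cones.

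Write the representation \eqref{Eq_Rep_Faces_Bn} of $F$ as a chain of $n-d+k$ non-zero blocks $G_1,\ldots,G_{n-d+k}$, where $G_j$ is the index set $\{\sigma(p):l_{j-1}<p\le l_j\}$ (with $l_0:=0$), followed by the zero block $G_0=\{\sigma(p):l_{n-d+k}<p\le n\}$ of size $n-l_{n-d+k}$, on which $f_m\equiv 0$ identically on $F$. By Proposition~\ref{Prop_Char_WeylFaces}, $F$ is $k$-dimensional, so I fix a point $v_0\in\relint F$ at which the common block values satisfy $t_1<t_2<\ldots<t_{n-d+k}<0$, the inequalities of \eqref{Eq_Rep_Faces_Bn} being strict on the relative interior. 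Since $F\subseteq D^B_{\eps',\sigma'}$ forces $\eps'_if_{\sigma'(i)}(v_0)\le 0$ for every $i$, I first read off the admissible signs: if $\sigma'(i)=m$ lies in a non-zero block $G_j$ whose $F$-sign is $\eps_p$ (so that $\eps_pf_m(v_0)=t_j<0$), then $\eps'_if_{\sigma'(i)}(v_0)=\eps'_i\eps_p t_j\le 0$ together with $t_j<0$ forces $\eps'_i=\eps_p$; whereas for $m\in G_0$ the value $f_m(v_0)=0$ imposes no constraint, and in fact $f_m\equiv 0$ on all of $F$, so $\eps'_i$ is entirely free. This produces the factor $2^{\,n-l_{n-d+k}}$ coming from the signs on the zero block, and fixes all other signs.

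With the signs determined, on the whole of $F$ the sequence $(\eps'_1f_{\sigma'(1)},\ldots,\eps'_nf_{\sigma'(n)})$ consists of the linear functionals $t_1,\ldots,t_{n-d+k}$, each repeated $|G_j|$ times, together with $|G_0|$ copies of $0$. The inclusion $F\subseteq D^B_{\eps',\sigma'}$ requires this sequence to be non-decreasing throughout $F$; evaluating at $v_0$, where $t_1<\ldots<t_{n-d+k}<0$ are distinct, this holds exactly when $\sigma'$ lists the members of $G_1$ first, then those of $G_2$, and so on, ending with $G_0$, the order within each block being arbitrary. Conversely, any pair built this way satisfies $\eps'_1f_{\sigma'(1)}\le\ldots\le\eps'_nf_{\sigma'(n)}\le 0$ on all of $F$, because $t_1\le\ldots\le t_{n-d+k}\le 0$ holds everywhere on $F$. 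The number of independent within-block orderings is $\prod_{j=1}^{n-d+k}|G_j|!\cdot|G_0|!=l_1!(l_2-l_1)!\cdots(n-l_{n-d+k})!$, and multiplying by the sign factor $2^{\,n-l_{n-d+k}}$ yields the asserted count; this is precisely the informal reasoning of Example~\ref{Example_WeylCones_containing_face}, now carried out in general.

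The step requiring the most care is the passage from necessity to sufficiency: I must ensure that a single relative-interior point $v_0$ detects all constraints, i.e. that the conditions forced on $(\eps',\sigma')$ by evaluation at $v_0$ are also enough to guarantee the inclusion on the entire cone $F$. This is exactly what is secured above, since the forced conditions translate into the identities $f_m\equiv 0$ ($m\in G_0$) and the inequalities $t_1\le\ldots\le t_{n-d+k}\le 0$, all of which are valid on all of $F$ rather than only at $v_0$; the strictness at $v_0$ is used solely to establish necessity of the sign and block-ordering constraints. I expect no further obstacles, the only remaining routine checks being the injectivity of $(\eps',\sigma')\mapsto D^B_{\eps',\sigma'}$ on full-dimensional cones, which is immediate from general position.
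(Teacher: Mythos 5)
Your proof is correct and takes essentially the same approach as the paper's, which generalizes Example~\ref{Example_WeylCones_containing_face}: you arrive at the identical enumeration (arbitrary permutations within each block, free signs only on the zero block) and the same injectivity of $(\eps',\sigma')\mapsto D^B_{\eps',\sigma'}$ on full-dimensional cones. The only difference is cosmetic: you exclude all other cones by evaluating at a relative-interior point of $F$ where the inequalities of \eqref{Eq_Rep_Faces_Bn} are strict, while the paper argues that any additional sign flip or order reversal would force $F$ into an extra hyperplane and collapse its representation to one of lower dimension; under \ref{label_GP1} these two mechanisms are equivalent.
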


The same argument proves also the following proposition stating that there is a one-to-one correspondence between the $k$-faces of the Weyl tessellation and those combinatorial representations that lead to a non-trivial face.
\begin{prop}\label{Prop uniq}
Let $y_1,\ldots,y_n\in\R^d$ satisfy the general position assumption \ref{label_GP1}.
Let furthermore $F=F^B_{\eps,\sigma}(l_1,\ldots,l_{n-d+k})$ and $G = F^B_{\delta,\pi}(i_1,\ldots,i_{n-d+m})$ be such that  $F\neq \{0\}$, with some $\varepsilon,\delta\in\{\pm 1\}^n$, $\sigma,\pi\in\mathcal{S}_n$ and  $1\leq k \leq d$, $1\leq m\leq d$ and $1\le l_1<\ldots<l_ {n-d+k}\le n$, $1\le i_1<\ldots<i_ {n-d+m}\le n$.  If $F= G$, then $\sigma = \pi$, $k=m$,  $l_j = i_j$ for all admissible $j$, and
$\eps_{j}= \delta_{j}$ for all $1\leq j \leq l_{n-d+k}$.
\end{prop}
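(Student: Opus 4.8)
The plan is to recover every ingredient of the combinatorial representation $F^B_{\eps,\sigma}(l_1,\ldots,l_{n-d+k})$ directly from the face $F$ as a geometric set, by evaluating the fixed linear functionals $f_1,\ldots,f_n$ at a single point in the relative interior of $F$. Since $F=G$ implies $\relint F=\relint G$, any such point simultaneously ``decodes'' both representations, which forces the two sets of data to coincide.

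Concretely, I would first fix a point $v_0\in\relint F$; this exists because, by Proposition~\ref{Prop_Char_WeylFaces}, the non-trivial face $F$ has dimension $k\ge 1$. The key reduction is that, under~\ref{label_GP1}, every breakpoint inequality in~\eqref{Eq_Rep_Faces_Bn} is facet-defining, so $\relint F$ is exactly the locus where all the ``$=$''-relations hold while the ``$\le$''-relations at $l_1,\ldots,l_{n-d+k}$ are strict. Hence the numbers $a_i:=f_i(v_0)=\langle v_0,y_i\rangle$, which depend only on $F$ and the given vectors and not on the representation, have the following structure: $a_i=0$ exactly for the indices $i$ in the tail block $\{\sigma(l_{n-d+k}+1),\ldots,\sigma(n)\}$; the signed values $\eps_{\sigma^{-1}(i)}a_i$ are constant on each group and strictly negative and strictly increasing as one passes from the first group to the last. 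In particular the non-tail ``levels'' $|a_i|$ are pairwise distinct across groups and nonzero.

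Granting this, each datum is read off intrinsically and must therefore agree for $F$ and $G$. The number of distinct values among $\{|a_i|:a_i\neq 0\}$ equals the number of non-tail groups, namely $n-d+k$; as $n,d$ are fixed this yields $k=m$. The tail is $\{i:a_i=0\}$, so $n-l_{n-d+k}$ is determined, giving $l_{n-d+k}=i_{n-d+m}$. Partitioning the non-tail indices into the fibres of $i\mapsto|a_i|$ and ordering these fibres by decreasing $|a_i|$ reconstructs the ordered set partition into groups, and reading off the cumulative group sizes gives $l_j=i_j$ for all admissible $j$. Finally, for a non-tail index $i$ one has $\eps_{\sigma^{-1}(i)}a_i<0$, so the sign attached to $i$ equals $-\sgn a_i$; this is intrinsic, whence the two representations assign the same sign to each non-tail index, i.e.\ $\eps_j=\delta_j$ for $1\le j\le l_{n-d+k}$ and $\sigma=\pi$ on the non-tail positions.

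The step I expect to be the crux is justifying that $\relint F$ is the strict locus, equivalently that the non-tail levels are genuinely distinct and nonzero. This is precisely where~\ref{label_GP1} is used, and it can be argued exactly as in Example~\ref{Example_WeylCones_containing_face}: if two adjacent group-levels coincided, or a non-tail level vanished, then $F$ would satisfy one further equality coming from the arrangement $\cA^B(y_1,\ldots,y_n)$, merging two groups (or absorbing a group into the tail) and lowering the number of groups, hence $\dim F<k$, a contradiction. The only remaining subtlety is the ordering of indices inside a single group: since the functionals of one group agree at every point of $F$, permuting them (together with their signs) yields a different pair $(\eps,\sigma)$ for the same face, so the conclusion $\sigma=\pi$ is to be understood after fixing a convention for the within-group order, exactly as the factorial factors in Proposition~\ref{Prop_WeylKegel_die_Seiten_enthalten_B_n} already reflect.
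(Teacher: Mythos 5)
Your proof is correct and rests on the same mechanism as the paper's own (which simply invokes ``the same argument'' as Example~\ref{Example_WeylCones_containing_face}): under~\ref{label_GP1} the face is full-dimensional in the subspace cut out by its equalities, so no further coincidence among the $f_i$'s can hold on all of $F$, and evaluating the $f_i$ at a relative-interior point — which lies in the strict locus of \emph{both} representations since $\relint F=\relint G$ — decodes the ordered groups, the $l_j$'s, $k$, and the sign attached to each non-tail index. Your closing caveat is well taken: as printed, the conclusions $\sigma=\pi$ and $\eps_j=\delta_j$ only hold up to permutations of the indices within a single group (together with the corresponding signs), since such permutations leave the set $F^B_{\eps,\sigma}(l_1,\ldots,l_{n-d+k})$ unchanged; what your argument — and the paper's — actually recovers is exactly the face-determining data, which is all that is needed where the proposition is applied (e.g.\ in the uniqueness step of Lemma~\ref{Lemma_Faces_Induced_in_L}).
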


The $A_{n-1}$-case can be treated similarly and we only briefly describe the results.
For $y_1,\ldots,y_n\in\R^d$ satisfying~\ref{label_GP1_A},
a collection of indices $1\le l_1<\ldots<l_{n-d+k-1}\le n-1$ and $\sigma\in\mathcal{S}_n$, we define
\begin{align}\label{Eq_Rep_Weyl_Faces_An-1}
&F^A_{\sigma}(l_1,\ldots,l_{n-d+k-1})\\
&	\quad:=\{v\in\R^d:f_{\sigma(1)}=\ldots=f_{\sigma(l_1)}\le f_{\sigma(l_1+1)}=\ldots =f_{\sigma(l_2)}\le\ldots\le f_{\sigma(l_{n-d+k-1}+1)}=\ldots=f_{\sigma(n)}\},\notag
\end{align}
where we recall that the functionals $f_i$ are defined by $f_i=f_i(v):=\langle v,y_i\rangle$, $i=1,\ldots,n$.
According to the following proposition, the $k$-faces of the Weyl tessellation of type $A_{n-1}$
are exactly the cones of the above form different from $\{0\}$.

\begin{prop}\label{Prop_Char_Weyl_Faces_An-1}
Let $1\le k\le d$ and let $y_1,\ldots,y_n\in\R^d$ satisfy the general position assumption \ref{label_GP1_A}. Then it holds:
\begin{enumerate}[label=(\roman*)]
\item For every face $F\in\F^A_k(y_1,\ldots,y_n)$ there are a collection of indices $1\le l_1<\ldots<l_ {n-d+k-1}\le n-1$ and a permutation $\sigma\in\mathcal{S}_n$, such that $F=F^A_{\sigma}(l_1,\ldots,l_{n-d+k-1})$. Moreover, this representation is unique.
\item Let $1\le l_1<\ldots<l_{n-d+k-1}\le n-1$ and $\sigma\in\mathcal{S}_n$. If $F^A_{\sigma}(l_1,\ldots,l_{n-d+k-1})\neq \{0\}$, then $F^A_{\sigma}(l_1,\ldots,l_{n-d+k-1})\in \F^A_k(y_1,\ldots,y_n)$.
\item Every $k$-face $F^A_{\sigma}(l_1,\ldots,l_{n-d+k-1})\in\mathcal{F}^A_k(y_1,\ldots,y_n)$ is contained in exactly
\begin{align*}
l_1!(l_2-l_1)!\cdot\ldots\cdot(n-l_{n-d+k-1})!
\end{align*}
cones $C\in\mathcal W^A(y_1,\ldots,y_n)$.
\end{enumerate}
\end{prop}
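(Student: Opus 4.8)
The plan is to mirror, with the obvious simplifications, the arguments already carried out in the $B_n$-case, namely Propositions~\ref{Prop_Char_WeylFaces}, \ref{Prop_WeylKegel_die_Seiten_enthalten_B_n} and~\ref{Prop uniq} together with Example~\ref{Example_WeylCones_containing_face}. Two structural features make the $A_{n-1}$-case strictly simpler: there is no trailing inequality ``$\le 0$'', so a Weyl cone $D^A_\sigma=\{v:f_{\sigma(1)}\le\ldots\le f_{\sigma(n)}\}$ is cut out by only $n-1$ relations, and no signs $\eps$ occur, so there is no last group that may be flipped.

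For parts (i) and (ii) I would run the same dimension count as in the proof of Proposition~\ref{Prop_Char_WeylFaces}. Any $k$-face $F$ of $\mathcal{W}^A(y_1,\ldots,y_n)$ is obtained from some cone $D^A_\sigma$ by turning a subset of its defining inequalities $f_{\sigma(i)}\le f_{\sigma(i+1)}$ into equalities; grouping the functionals accordingly gives $F=F^A_\sigma(l_1,\ldots,l_m)$ for some number $m$ of remaining ``$\le$'' signs. Writing $L_m$ for the linear subspace defined by the equality constraints alone, assumption~\ref{label_GP1_A} forces the $n-1-m$ hyperplanes whose intersection is $L_m$ to be in general position, whence $\dim L_m=d-(n-1-m)=d-n+m+1$, and it also forces the induced arrangement $\mathcal{A}_m|L_m$ to be in general position inside $L_m$. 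Consequently $F$, being a nontrivial chamber of this induced tessellation, is full-dimensional in $L_m$ by Remark~\ref{remark:cones_full-dim}, so $k=\dim F=d-n+m+1$ and therefore $m=n-d+k-1$, as claimed. The converse inclusion in (ii) is immediate from the same full-dimensionality statement.

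Part (iii) follows the bookkeeping of Example~\ref{Example_WeylCones_containing_face}. I would show that a cone $D^A_\pi$ contains $F=F^A_\sigma(l_1,\ldots,l_{n-d+k-1})$ if and only if $\pi$ is obtained from $\sigma$ by permuting indices \emph{within} the $n-d+k$ groups determined by $l_1<\ldots<l_{n-d+k-1}$. For the ``if'' direction such a within-group permutation leaves the defining (in)equalities of $F$ unchanged, so $F$ is still a face of $D^A_\pi$. For the ``only if'' direction, the contradiction argument of the example applies verbatim: if the order of two functionals lying in different groups were swapped, then $F$ would be forced to lie in an extra hyperplane $\{f_i=f_j\}$, thereby lowering its number of groups and hence its dimension below $k$, a contradiction. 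Counting within-group permutations gives exactly the product $l_1!(l_2-l_1)!\cdots(n-l_{n-d+k-1})!$ of the factorials of the group sizes; the absence of the factor $2^{\,\cdot}$ present in Proposition~\ref{Prop_WeylKegel_die_Seiten_enthalten_B_n} reflects the missing signs in type $A_{n-1}$.

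The uniqueness assertion in (i) is the $A_{n-1}$-analogue of Proposition~\ref{Prop uniq} and is the one point requiring a little care, since permuting indices inside a group produces a different $\sigma$ yielding the same set $F$. I would resolve this by adopting the canonical convention that the indices within each group are listed in increasing order; with this normalization the same ``only if'' dimension-drop argument shows that the groups, their relative ordering, and hence $\sigma$, the break points $l_1,\ldots,l_{n-d+k-1}$, and the value of $k$, are uniquely recovered from $F$. I expect this uniqueness/counting step — establishing that no Weyl cone other than the within-group permutations contains $F$ — to be the main obstacle, but it is handled entirely by the dimension-drop contradiction already used in Example~\ref{Example_WeylCones_containing_face}.
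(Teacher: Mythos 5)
Your proposal is correct and follows essentially the same route as the paper, which itself only states that the $A_{n-1}$-case ``can be treated similarly'' to the $B_n$-arguments of Propositions~\ref{Prop_Char_WeylFaces}, \ref{Prop_WeylKegel_die_Seiten_enthalten_B_n}, \ref{Prop uniq} and Example~\ref{Example_WeylCones_containing_face}; your dimension count correctly replaces the $n-m$ equality constraints of type $B_n$ by $n-1-m$, giving $m=n-d+k-1$. Your explicit normalization (listing indices within each group in increasing order) to make the representation literally unique is a small but welcome refinement of the paper's treatment.
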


\subsection{Reduction to linear subspaces intersecting Weyl chambers}
\label{sec:proof_faces_multiplicity_reduction}
Now, we want to reduce our problem of counting the faces in the Weyl tessellation in $\R^d$
to counting the number of faces of Weyl chambers in $\R^n$ intersected by a certain linear subspace.

\subsubsection*{Type $B_n$}
The \textit{reflection arrangement of type $B_n$}, denoted by $\A(B_n)$, consists of the hyperplanes given by
\begin{align}\label{Ey_Reflection_Arrangement_Bn}
&\{\beta\in\R^n:\beta_i=\beta_j\}\quad (1\le i<j\le n),\nonumber\\
&\{\beta\in\R^n:\beta_i=-\beta_j\}\quad (1\le i<j\le n),\\
&\{\beta\in\R^n:\beta_i=0\}\quad (1\le i\le n)\nonumber.
\end{align}
The \textit{closed Weyl chambers of type $B_n$} are the cones of the conical tessellation in $\R^n$
generated by $\A(B_n)$.   Thus, the closed Weyl chambers of type $B_n$ are given by
\begin{align*}
C^B_{\eps,\sigma}:=\{(\beta_1,\ldots,\beta_n)\in\R^n:\varepsilon_1\beta_{\sigma(1)}\le\ldots\le\varepsilon_n\beta_{\sigma(n)}\le 0\},\quad\sigma\in\mathcal{S}_n,\, \varepsilon\in\{\pm 1\}^n.
\end{align*}
Recall that $\mathcal S_n$ denotes the group of permutations of the set $\{1,\ldots,n\}$. The $k$-dimensional faces of the Weyl chamber $C^B_{\eps,\sigma}$ correspond to collections of indices $1\le l_1<\ldots<l_k\le n$ and have the form
\begin{align}\label{Eq_Rep_Weyl_Chamber_Faces_Bn}
C^B_{\eps,\sigma}(l_1,\ldots,l_k):=\{\beta\in\R^n&:\varepsilon_1\beta_{\sigma(1)}=\ldots=\varepsilon_{l_1}\beta_{\sigma(l_1)}\le \varepsilon_{l_1+1}\beta_{\sigma(l_1+1)}=\ldots=\varepsilon_{l_2}\beta_{\sigma(l_2)}\notag\\
&	\;\;\le\ldots\le \varepsilon_{l_{k-1}+1}\beta_{\sigma(l_{k-1}+1)}=\ldots=\varepsilon_{l_k}\beta_{\sigma(l_k)}\\
&	\;\;\le \beta_{\sigma(l_k+1)}=\ldots=\beta_{\sigma(n)}=0\}.\notag
\end{align}
In the case $l_k=n$, no $\beta_i$'s are required to be 0 (but the conditions $\varepsilon_i \beta_{\sigma(i)}\leq 0$ remain in force). In particular,  the number of $k$-faces of any Weyl chamber of type $B_n$ is given by $f_k(C^B_{\eps,\sigma})=\binom{n}{k}$.

Now, we need to introduce another general position condition, called~\ref{label_GP2}, which is equivalent to condition~\ref{label_GP1}. This equivalence is stated in the following theorem and is proved in Section \ref{Subsection_Proof_GeneralPosition}.

\begin{satz}\label{Theorem_Aequivalenz_B1_B2}
For arbitrary $y_1,\ldots,y_n\in\R^d$ the following conditions \ref{label_GP1} and \ref{label_GP2} are equivalent:
\begin{enumerate}[label=(B\arabic*), leftmargin=50pt]
\item For every $\varepsilon=(\varepsilon_1,\ldots,\varepsilon_n)\in\{\pm 1\}^n$ and $\sigma\in\mathcal{S}_n$ the vectors $\varepsilon_1y_{\sigma(1)}-\varepsilon_2y_{\sigma(2)},\varepsilon_2y_{\sigma(2)}-\varepsilon_3y_{\sigma(3)},\ldots,\varepsilon_{n-1}y_{\sigma(n-1)}-\varepsilon_ny_{\sigma(n)}, \varepsilon_ny_{\sigma(n)}$ are in general position, and $n\ge d$.
\item The linear subspace $L^\perp$ has dimension $d$ and is in general position with respect to the hyperplane arrangement $\mathcal{A}(B_n)$, where $L:=\{\beta\in\R^n:\beta_1y_1+\ldots+\beta_ny_n=0\}$, and $n\ge d$.\label{label_GP2}
\end{enumerate}
\end{satz}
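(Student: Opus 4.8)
The plan is to translate both conditions into linear algebra via the map $T\colon \R^n\to\R^d$, $T\beta=\beta_1y_1+\ldots+\beta_ny_n$, whose kernel is $L$ and whose adjoint $T^*\colon\R^d\to\R^n$, $v\mapsto(\langle v,y_1\rangle,\ldots,\langle v,y_n\rangle)$, has image $(\ker T)^\perp=L^\perp$. First I would record the identities $\operatorname{im}T^*=L^\perp$ and $\langle T^*v,\beta\rangle=\langle v,T\beta\rangle$, and note that $\dim L^\perp=\rank T=\dim\Span(y_1,\ldots,y_n)$, so the requirement ``$\dim L^\perp=d$'' in \ref{label_GP2} just says that $y_1,\ldots,y_n$ span $\R^d$. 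When this holds, $T^*$ is injective, hence a linear isomorphism onto $L^\perp$.

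The conceptual core is a structural lemma about $\A(B_n)$: the normals of the walls of a chamber $C^B_{\eps,\sigma}$ are the simple roots $\rho_i=\eps_ie_{\sigma(i)}-\eps_{i+1}e_{\sigma(i+1)}$ (for $1\le i<n$) and $\rho_n=\eps_ne_{\sigma(n)}$, which form a basis of $\R^n$, and moreover every flat of $\A(B_n)$ (every intersection of arrangement hyperplanes) can be written as $\bigcap_{\rho\in S}\rho^\perp$ for a subset $S$ of the simple roots of some chamber. I would prove this by describing a flat through its signed partial partition, namely a ``zero block'' $Z$ together with signed blocks $B_1,\ldots,B_p$ covering $\{1,\ldots,n\}\setminus Z$, and then choosing $\sigma$ to list $B_1,\ldots,B_p,Z$ consecutively and $\eps$ to match the within-block signs: the within-block simple roots reproduce the block identifications, the tail simple roots indexing $Z$ together with $\rho_n$ span $\langle e_j:j\in Z\rangle$, and the boundary simple roots connecting distinct blocks are discarded. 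A dimension count $|S|=n-p=\codim$ of the flat, together with the linear independence of the $\rho_i$, closes this step.

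With the lemma in hand the computation is direct. For a flat $M_S=\bigcap_{\rho\in S}\rho^\perp$ one has $\dim M_S=n-|S|$, and the isomorphism $T^*$ gives $(T^*)^{-1}(L^\perp\cap M_S)=\{v:\langle v,T\rho\rangle=0\text{ for all }\rho\in S\}=(\Span\{T\rho:\rho\in S\})^\perp$, so that $\dim(L^\perp\cap M_S)=d-\dim\Span\{T\rho:\rho\in S\}$. Comparing with the general-position value $\max\{0,\dim L^\perp+\dim M_S-n\}=\max\{0,d-|S|\}$ shows that \ref{label_GP2} is equivalent to $\rank T=d$ together with $\dim\Span\{T\rho:\rho\in S\}=\min\{|S|,d\}$ for every subset $S$ of every chamber's simple system. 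Since $T\rho_i=\eps_iy_{\sigma(i)}-\eps_{i+1}y_{\sigma(i+1)}$ and $T\rho_n=\eps_ny_{\sigma(n)}$ are exactly the chain vectors in \ref{label_GP1}, condition \ref{label_GP1} says precisely that, for every chamber, any $d$ of $T\rho_1,\ldots,T\rho_n$ are linearly independent.

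Finally I would check that these two subset conditions coincide by an elementary rank argument: ``any $d$ of $T\rho_1,\ldots,T\rho_n$ are independent'' forces $\dim\Span\{T\rho:\rho\in S\}=\min\{|S|,d\}$ for every $S$ (extend $S$ to a $d$-element set when $|S|\le d$, or restrict to a $d$-element subset when $|S|>d$), and conversely the latter applied to $d$-element subsets recovers the former; taking $S$ to be a full simple system yields $\rank T=d$, so the rank requirement is automatic on both sides. The main obstacle is the combinatorial lemma identifying every flat with an intersection of simple-root hyperplanes of a single chamber, including the careful sign- and zero-block bookkeeping; once that structural fact is in place, the equivalence reduces to the adjoint identity $\langle T^*v,\beta\rangle=\langle v,T\beta\rangle$ and the short rank manipulation above.
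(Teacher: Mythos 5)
Your argument is correct, and it reaches the equivalence by a genuinely cleaner route than the paper, although it rests on the same two pillars: the coefficient operator $\beta\mapsto\beta_1y_1+\ldots+\beta_ny_n$ (the paper's $A$ with $Ae_i=y_i$, $\Ker A=L$) and the description of the flats of $\mathcal{A}(B_n)$ as signed partial partitions with a zero block, which the paper also uses when it normalizes $K'$ to the form \eqref{Eq_Subspace_L(B_n)_Bsp}. Where the paper argues both implications by contradiction --- translating a specific linear dependence among the chain vectors into the qualitative statement $K^\perp\cap L\neq\{0\}$ and, in the converse direction, splitting into the cases $k\ge n-d$ and $k<n-d$ and enlarging the flat $K'$ to a flat $K''$ of dimension $n-d$ --- you prove the exact identity $\dim(L^\perp\cap M_S)=d-\dim\Span\{T\rho:\rho\in S\}$ via the adjoint isomorphism $T^*\colon\R^d\to L^\perp$, which turns both \ref{label_GP1} and \ref{label_GP2} into the single condition $\dim\Span\{T\rho:\rho\in S\}=\min\{|S|,d\}$ for all subsets $S$ of all simple systems; the case distinction and the flat-extension step disappear, and the rank requirement $\dim L^\perp=d$ comes out automatically on both sides. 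The one step you must carry out carefully is the structural lemma that every intersection of hyperplanes from $\mathcal{A}(B_n)$ equals $\bigcap_{\rho\in S}\rho^\perp$ for a subset $S$ of the simple roots of a \emph{single} chamber, with the correct count $|S|=n-p$; your signed-block bookkeeping (signs chosen within each block, boundary roots between blocks discarded, the tail roots together with $\rho_n$ cutting out the zero block) does this correctly, and it is the exact analogue of the normalization the paper performs by a signed permutation of coordinates.
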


The following lemma states the explicit connection between the faces of the Weyl tessellation $\mathcal W^B(y_1,\ldots,y_n)$ of type $B_n$ and the faces of the Weyl chambers of the same type.
Recall from \eqref{Eq_Rep_Faces_Bn} the notation $F^B_{\eps,\sigma}(l_1,\ldots,l_{n-d+k})$
for the $k$-faces of the Weyl tessellation of type $B_n$.

\begin{lem}\label{Lem_Connection_Face_WeylFace} Let $1\le k\le d$ and let $y_1,\ldots,y_n\in\R^d$ satisfy one of the equivalent general position assumptions \ref{label_GP1} or \ref{label_GP2}. Consider the linear subspace
$L=\{\beta\in\R^n:\beta_1y_1+\ldots+\beta_ny_n=0\}$. Then, the equivalence
\begin{align}\label{Eq_Lemma_Equivalence_Weylfaces}
F^B_{\eps,\sigma}(l_1,\ldots,l_{n-d+k})=\{0\}\Leftrightarrow C^B_{\eps,\sigma}(l_1,\ldots,l_{n-d+k})\cap L^\perp=\{0\}
\end{align}
holds true for all $1\le l_1<\ldots<l_{n-d+k}\le n$, $\eps\in\{\pm 1\}^n$ and $\sigma\in\mathcal{S}_n$.
\end{lem}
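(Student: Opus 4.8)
The plan is to realize both $F^B_{\eps,\sigma}(l_1,\ldots,l_{n-d+k})$ and the intersection $C^B_{\eps,\sigma}(l_1,\ldots,l_{n-d+k})\cap L^\perp$ through a single linear map, and then to read off the equivalence from the injectivity of that map. Concretely, I would introduce the linear map $T\colon\R^d\to\R^n$ defined by
\[
T(v):=(f_1(v),\ldots,f_n(v))=(\langle v,y_1\rangle,\ldots,\langle v,y_n\rangle),
\]
so that the $i$-th coordinate of $T(v)$ is exactly the functional $f_i$ occurring in \eqref{Eq_Rep_Faces_Bn}.

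First I would compare the defining chains \eqref{Eq_Rep_Faces_Bn} and \eqref{Eq_Rep_Weyl_Chamber_Faces_Bn} term by term. Substituting $\beta_i=f_i(v)$ transforms the system defining $C^B_{\eps,\sigma}(l_1,\ldots,l_{n-d+k})$ into the system defining $F^B_{\eps,\sigma}(l_1,\ldots,l_{n-d+k})$ (including the trailing equalities $=0$ and the convention for $l_{n-d+k}=n$). Hence $v$ lies in $F^B_{\eps,\sigma}(l_1,\ldots,l_{n-d+k})$ if and only if $T(v)\in C^B_{\eps,\sigma}(l_1,\ldots,l_{n-d+k})$, that is,
\[
F^B_{\eps,\sigma}(l_1,\ldots,l_{n-d+k})=T^{-1}\bigl(C^B_{\eps,\sigma}(l_1,\ldots,l_{n-d+k})\bigr).
\]

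Next I would identify $\operatorname{im}T$ with $L^\perp$. The adjoint $T^\ast\colon\R^n\to\R^d$ is $T^\ast(\beta)=\beta_1y_1+\ldots+\beta_ny_n$, whence $L=\ker T^\ast$ and therefore $\operatorname{im}T=(\ker T^\ast)^\perp=L^\perp$. Using the equivalence of \ref{label_GP1} and \ref{label_GP2} (Theorem \ref{Theorem_Aequivalenz_B1_B2}), assumption \ref{label_GP2} gives $\dim L^\perp=d$, so $\rank T=d$ and $T$ is injective; thus $T$ is a linear isomorphism of $\R^d$ onto $L^\perp$.

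Finally, the set identity $T\bigl(T^{-1}(C)\bigr)=C\cap\operatorname{im}T$ (valid for any map) gives
\[
T\bigl(F^B_{\eps,\sigma}(l_1,\ldots,l_{n-d+k})\bigr)=C^B_{\eps,\sigma}(l_1,\ldots,l_{n-d+k})\cap L^\perp.
\]
Since $T$ is injective and $T(0)=0$, the cone on the left equals $\{0\}$ exactly when $F^B_{\eps,\sigma}(l_1,\ldots,l_{n-d+k})=\{0\}$, which is precisely the claimed equivalence \eqref{Eq_Lemma_Equivalence_Weylfaces}. I do not anticipate a genuine obstacle here: the only points needing care are the bookkeeping that the substitution $\beta_i\mapsto f_i(v)$ matches the two chains exactly, and the clean use of $\operatorname{im}T=L^\perp$ together with the injectivity furnished by \ref{label_GP2}.
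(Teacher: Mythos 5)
Your proof is correct, and it takes a genuinely different and more elementary route than the paper. You exhibit the evaluation map $T(v)=(\langle v,y_1\rangle,\ldots,\langle v,y_n\rangle)$ as an injective linear map with image $L^\perp$ (injectivity and $\operatorname{im}T=(\ker T^\ast)^\perp=L^\perp$ both follow from $\dim L^\perp=d$, which \ref{label_GP2} supplies), observe that the chain defining $F^B_{\eps,\sigma}(l_1,\ldots,l_{n-d+k})$ in \eqref{Eq_Rep_Faces_Bn} is literally the pullback under $T$ of the chain defining $C^B_{\eps,\sigma}(l_1,\ldots,l_{n-d+k})$ in \eqref{Eq_Rep_Weyl_Chamber_Faces_Bn}, and conclude $T\bigl(F^B_{\eps,\sigma}(l_1,\ldots,l_{n-d+k})\bigr)=C^B_{\eps,\sigma}(l_1,\ldots,l_{n-d+k})\cap L^\perp$; the equivalence is then immediate. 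The paper instead argues by duality: it computes $F^\circ$ and $F^\circ\cap\lin F$, reduces $F=\{0\}$ to a positive-combination condition via Lemma~\ref{Lemma_Positivkombination_Null}, reformulates this as $M\cap L\nsubseteq\linsp(M)$ for an auxiliary cone $M$, applies the Farkas-type Lemma~\ref{Lemma_Cone_Subspace_LinSp} to pass to $\relint(M^\circ)\cap L^\perp=\emptyset$, identifies $M^\circ$ with the chamber face, and finally invokes Lemma~\ref{Lemma_General_Position_Boundary_Faces} (which genuinely needs the general position of $L^\perp$ with respect to $\cA(B_n)$) to convert the relative-interior statement into $C^B_{\eps,\sigma}(\ldots)\cap L^\perp=\{0\}$. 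Your argument buys more with less: it needs only that $y_1,\ldots,y_n$ span $\R^d$ rather than the full force of \ref{label_GP1}/\ref{label_GP2}, it avoids the separation and relative-interior machinery entirely, and it actually establishes the stronger fact that $T$ is a face-by-face linear isomorphism between the Weyl tessellation $\cW^B(y_1,\ldots,y_n)$ of $\R^d$ and the tessellation of $L^\perp$ cut out by the reflection arrangement $\cA(B_n)$, which is the structural statement underlying the whole reduction in Section~\ref{sec:proof_faces_multiplicity_reduction}. The only bookkeeping point, which you flag, is that the two chains match term by term under $\beta_i\mapsto f_i(v)$, including the trailing group of zeros and the convention for $l_{n-d+k}=n$; they do.
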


Before we prove this lemma, we want to state separately the special case $\eps_i=+1$, $\sigma(i)=i$ and $k=d$. In this case the lemma states that
\begin{align*}
\{v\in\R^d: \langle v,y_1\rangle\le\ldots\le \langle v,y_n\rangle\le 0\}=\{0\}\Leftrightarrow \{\beta\in\R^n:\beta_1\le\ldots\le\beta_n\le 0\}\cap L^\perp=\{0\}.
\end{align*}
This means that the Weyl cone on the left-hand side is degenerate, i.e.~ equal to $\{0\}$, if and only if the corresponding Weyl chamber, having the same order of inequalities, intersects $L^\perp$ only in a trivial way. Thus, using the general form of the  lemma, we can count the number of faces of the Weyl chambers intersected by $L^\perp$ in a non-trivial way instead of counting the faces of the Weyl tessellation.

To prove the lemma,  we need some additional results on general position. The following fact is standard; see, e.g.,\ \cite[p.~167]{Zeigler_LecturesOnPolytopes}.

\begin{lem}\label{Lemma_Positivkombination_Null}
Let $y_1,\ldots,y_n\in\R^d$ be in general position. Then $\pos\{y_1,\ldots,y_n\}=\R^d$ holds if and only if there exists an $\alpha=(\alpha_1,\ldots,\alpha_n)\in\R^n_{\ge 0}\setminus\{0\}$ such that $\alpha_1y_1+\ldots+\alpha_ny_n=0$.
\end{lem}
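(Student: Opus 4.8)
The plan is to prove the two implications separately, noting that the general position hypothesis is needed only for one direction.

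For the forward implication, suppose $\pos\{y_1,\ldots,y_n\}=\R^d$. Then in particular $-y_1$ lies in the positive hull, so there are coefficients $\lambda_1,\ldots,\lambda_n\ge 0$ with $-y_1=\sum_{i=1}^n\lambda_i y_i$. Rearranging yields $(1+\lambda_1)y_1+\sum_{i=2}^n\lambda_i y_i=0$, a relation of the desired form with $\alpha_1:=1+\lambda_1>0$ and $\alpha_i:=\lambda_i\ge0$ for $i\ge 2$; since $\alpha_1>0$ we have $\alpha\neq 0$. This direction uses no general position at all.

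For the reverse implication, I would assume $\sum_{i=1}^n\alpha_i y_i=0$ for some $\alpha\in\R^n_{\ge 0}\setminus\{0\}$ and let $S:=\{i:\alpha_i>0\}$ be its support, so that $\sum_{i\in S}\alpha_i y_i=0$ is a nontrivial linear dependence among $\{y_i:i\in S\}$. Here the hypothesis enters: since any $d$ of the $y_i$ are linearly independent, a linearly dependent subset must have more than $d$ elements, whence $|S|\ge d+1$ and in particular the vectors $\{y_i:i\in S\}$ span $\R^d$ linearly. It then suffices to show $\pos\{y_i:i\in S\}=\R^d$, which I would establish by a polarity (separating hyperplane) argument. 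The dual cone equals $\pos\{y_i:i\in S\}^\circ=\{x\in\R^d:\langle x,y_i\rangle\le 0\text{ for all }i\in S\}$, and a closed convex cone is all of $\R^d$ precisely when its dual cone is $\{0\}$. If $x$ lies in this dual cone, then $0=\langle x,\sum_{i\in S}\alpha_i y_i\rangle=\sum_{i\in S}\alpha_i\langle x,y_i\rangle$ is a sum of nonpositive terms (each $\alpha_i>0$ and each $\langle x,y_i\rangle\le 0$), forcing $\langle x,y_i\rangle=0$ for every $i\in S$; since these vectors span $\R^d$, this gives $x=0$. Hence the dual cone is trivial, so $\pos\{y_i:i\in S\}=\R^d$, and a fortiori $\pos\{y_1,\ldots,y_n\}=\R^d$.

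The only genuinely delicate point is the reverse direction, and within it the passage from a strictly positive relation on the support to the conclusion that the entire space is covered; the separating-hyperplane argument handles this cleanly, the one subtlety being that we must know the support spans $\R^d$, which is exactly what general position delivers via the bound $|S|\ge d+1$. Everything else is routine.
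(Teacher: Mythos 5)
Your proof is correct. Note that the paper itself gives no argument for this lemma: it is quoted as a standard fact with a pointer to Ziegler's \emph{Lectures on Polytopes}, so there is no ``paper proof'' to compare against. Your argument is a clean, self-contained verification along the standard duality lines: the forward direction is the elementary observation that $-y_1\in\pos\{y_1,\ldots,y_n\}$ yields a nontrivial nonnegative relation, and the reverse direction correctly isolates where general position is used (a nontrivial nonnegative dependence forces the support $S$ to have at least $d+1$ elements, hence $\{y_i:i\in S\}$ spans $\R^d$) and then concludes via the bipolar characterization $C=\R^d\Leftrightarrow C^\circ=\{0\}$ for the (finitely generated, hence closed) cone $\pos\{y_i:i\in S\}$. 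The one hypothesis you invoke implicitly --- closedness of a finitely generated cone, needed for the bipolar step --- is standard and unproblematic. No gaps.
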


The following result is a slightly more general version of the Farkas lemma; see the proof of~\cite[Lemma~2.1]{HugSchneider2016} and use the orthogonal decomposition of $C$ into $\linsp (C)$ and $C\cap (\linsp (C))^\perp$.

\begin{lem}\label{Lemma_Cone_Subspace_LinSp}
Let $C$ be a cone and $L$ be a subspace in $\R^d$. Then
\begin{align*}
C\cap L\nsubseteq\linsp(C)\Leftrightarrow \relint(C^\circ)\cap L^\perp=\emptyset.
\end{align*}
\end{lem}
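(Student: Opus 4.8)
The plan is to prove the two implications separately, each in its contrapositive form, and in both cases to exploit the biduality relation $(C^\circ)^\circ = C$, which is available since a polyhedral cone is a closed convex cone. The only slightly nonstandard ingredient is a handle on $\relint(C^\circ)$: for $u \in C^\circ$ one expects $\langle u, y\rangle = 0$ with $y \in C$ to force $y \in \linsp(C)$ exactly when $u \in \relint(C^\circ)$. Rather than invoke the face correspondence between $C$ and $C^\circ$, I would obtain the half I need directly from the elementary maximum principle: a linear functional attaining its maximum over a convex set at a relative interior point is constant on that set.

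For the direction $\relint(C^\circ)\cap L^\perp \neq \emptyset \Rightarrow C\cap L \subseteq \linsp(C)$ (the contrapositive of ``$\Rightarrow$''), I would fix $u \in \relint(C^\circ)\cap L^\perp$ and an arbitrary $y \in C\cap L$. Since $y \in C$ and $u \in C^\circ$, the functional $x\mapsto\langle x,y\rangle$ is $\le 0$ on $C^\circ$; because $u \in L^\perp$ and $y \in L$ it vanishes at $u$, so $u$ is a maximizer lying in $\relint(C^\circ)$. The maximum principle then gives $\langle x,y\rangle = 0$ for all $x \in C^\circ$, whence $\langle x,-y\rangle \le 0$ for all $x \in C^\circ$ and thus $-y \in (C^\circ)^\circ = C$ by biduality. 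Together with $y \in C$ this yields $y \in C\cap(-C) = \linsp(C)$, as required.

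For the reverse direction I would argue by proper separation. Assuming $\relint(C^\circ)\cap L^\perp = \emptyset$, the relative interiors of the convex sets $C^\circ$ and $L^\perp$ are disjoint (the latter being a subspace, hence equal to its own relative interior), so Rockafellar's proper separation theorem supplies $w$ and $\beta$ with $C^\circ \subseteq \{\langle w,\cdot\rangle \le \beta\}$, $L^\perp \subseteq \{\langle w,\cdot\rangle \ge \beta\}$, and $C^\circ\cup L^\perp$ not contained in the separating hyperplane. Since $L^\perp$ is a subspace and $C^\circ$ a cone through the origin, bounding the functional on each forces $w \in L$, $\beta = 0$, and $\langle w,x\rangle \le 0$ for all $x \in C^\circ$; hence $w \in (C^\circ)^\circ = C$, so $w \in C\cap L$. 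Properness provides some $x_0 \in C^\circ$ with $\langle w,x_0\rangle < 0$, which excludes $w \in \linsp(C)$: otherwise $-w \in C$ would force $\langle w,x_0\rangle \ge 0$. Thus $w \in (C\cap L)\setminus\linsp(C)$, i.e.\ $C\cap L \nsubseteq \linsp(C)$.

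The main obstacle, and the step I would treat most carefully, is the separation direction: I must ensure the separation is \emph{proper} rather than a bare separating hyperplane, because it is precisely properness that produces the witness $x_0$ certifying $w \notin \linsp(C)$; an ordinary separation theorem would leave open the possibility $w \in \linsp(C)$ and the argument would collapse. The hypothesis of disjoint relative interiors needed to invoke proper separation is available exactly because $L^\perp$ is a subspace. In the first direction the only point deserving attention is the legitimacy of the maximum principle at a relative interior point, which is elementary.
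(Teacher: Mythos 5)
Your proof is correct, and it takes a genuinely different route from the paper's. For the implication $\relint(C^\circ)\cap L^\perp\neq\emptyset\Rightarrow C\cap L\subseteq\linsp(C)$, the paper does not argue directly: it decomposes $C=\linsp(C)+(C\cap(\linsp C)^\perp)$ and reduces to the pointed case, which it handles by citing the Farkas-type Lemma~2.1 of Hug--Schneider. Your maximum-principle argument (a linear functional maximized over a convex set at a relative interior point is constant there, hence $\langle\cdot,y\rangle\equiv 0$ on $C^\circ$ and $-y\in(C^\circ)^\circ=C$) replaces both the decomposition and the external citation with an elementary, self-contained computation; the one point you flag, the legitimacy of the maximum principle, is indeed standard (it follows from the characterization of $\relint$ via prolongation of segments). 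For the converse, both proofs use separation, but differently: the paper separates $\relint(C^\circ)$ from $L^\perp\cap\lin(C^\circ)$ \emph{inside the ambient subspace} $\lin(C^\circ)$ and then dualizes the resulting containment $C^\circ+L^\perp\nsupseteq\lin(C^\circ)$ using the identity $(\linsp C)^\circ=\lin(C^\circ)$, whereas you apply proper separation of $C^\circ$ and $L^\perp$ in all of $\R^d$ and read off an explicit witness $w\in(C\cap L)\setminus\linsp(C)$. Your insistence on \emph{proper} separation is exactly the right care to take: it is what furnishes the point $x_0\in C^\circ$ with $\langle w,x_0\rangle<0$ that rules out $w\in\linsp(C)$, playing the role that working inside $\lin(C^\circ)$ plays in the paper. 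The net effect is a proof that relies only on biduality, Rockafellar-style proper separation, and an elementary maximum principle, at the cost of being slightly longer than the paper's reduction-plus-citation.
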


Additionally, we use a result on general position established in~\cite{KVZ15}. In our case, we need the following more general version.

\begin{lem}\label{Lemma_General_Position_Boundary_Faces}
Let $\A$ be a hyperplane arrangement in $\R^d$ and let $L$ be a linear subspace in $\R^d$. Furthermore let $\xbar{\mathcal R}(\cA)$ be the conical tessellation generated by $\cA$. If $L$ is in general position with respect to $\A$, then for all faces of the conical tessellation $F\in\mathcal{F}(\xbar{\mathcal{R}}(\A))$, it holds that
\begin{align*}
F\cap L\neq\{0\}\Leftrightarrow\relint(F)\cap L\neq\emptyset.
\end{align*}
\end{lem}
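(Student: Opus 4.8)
The plan is to prove the two implications separately; all the force of the general position hypothesis goes into the forward direction.

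The implication $\relint(F)\cap L\neq\emptyset\Rightarrow F\cap L\neq\{0\}$ is essentially immediate. Any $z\in\relint(F)\cap L$ already lies in $F\cap L$, and when $F$ is not a linear subspace such a $z$ is automatically nonzero, since then $0\in\relbd(F)$; hence $F\cap L\neq\{0\}$. (The only delicate case is that of faces $F$ which are themselves linear subspaces, where the equivalence has to be read with $F\neq\{0\}$; I will flag this, as it does not affect the applications of the lemma.)

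For the forward direction $F\cap L\neq\{0\}\Rightarrow\relint(F)\cap L\neq\emptyset$, I first localize to $M:=\lin(F)$, writing $k=\dim F$. Since $M$ is the intersection of exactly those hyperplanes of $\A$ that contain $F$, the general position assumption applies to $M$ and gives $\dim(L\cap M)=\max\{0,\dim L+k-d\}$. As $F\cap L=F\cap(L\cap M)$ is nonzero, we get $L\cap M\neq\{0\}$ and therefore $\dim(L\cap M)=\dim L+k-d\geq 1$. Now I pick any nonzero $z\in F\cap L$ and let $G$ be the unique face of the tessellation with $z\in\relint(G)$; since $z\in F$, $G$ is a face of $F$. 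Put $M_G:=\lin(G)$ and $k_G:=\dim G\leq k$; if $G=F$ then $z\in\relint(F)\cap L$ and we are done, so assume $k_G<k$. Because $M_G$ is again an intersection of hyperplanes of $\A$ and $z\in L\cap M_G$, general position yields $\dim(L\cap M_G)=\dim L+k_G-d\geq 1$.

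The crux is a transversality identity. Using $\dim(L\cap M)=\dim L+k-d$, $\dim(L\cap M_G)=\dim L+k_G-d$, and $(L\cap M)\cap M_G=L\cap M_G$, I compute $\dim\!\big((L\cap M)+M_G\big)=\dim(L\cap M)+\dim M_G-\dim(L\cap M_G)=k=\dim M$, whence $(L\cap M)+M_G=M$. Thus $L\cap M$ surjects onto the normal space $M\ominus M_G$ of $G$ inside $M$. I then invoke the standard local structure of the polyhedral cone $F$ at a relative interior point $z$ of its face $G$: on a neighborhood of $z$, $F$ coincides with $M_G\oplus P$, where $P$ is the pointed normal cone of $G$ in $F$, of dimension $k-k_G$, and $\relint(F)$ coincides with $M_G\oplus\relint(P)$. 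Choosing a small $q\in\relint(P)$ and a small $\xi\in L\cap M$ with normal component $q$ (possible because $L\cap M$ surjects onto $M\ominus M_G$), the point $\eta:=z+\xi$ lies in $L\cap M$, is close to $z$, and has normal component $q\in\relint(P)$; hence $\eta\in\relint(F)\cap L$, which proves the claim.

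The step I expect to be the main obstacle is exactly the transversality identity $(L\cap M)+M_G=M$. This is where general position with respect to the higher intersection $M_G$ — and not merely with respect to individual hyperplanes of $\A$ — is indispensable: it is precisely this identity that excludes the degenerate scenario in which $L$ grazes $F$ only along a proper face $G$, a scenario that would otherwise falsify the lemma. A secondary matter, to be recorded but routine, is the bookkeeping at the origin for subspace faces noted above.
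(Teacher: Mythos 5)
Your proof is correct, but it takes a genuinely different route from the paper. The paper's proof is a two-line reduction: it observes that $F$ is a full-dimensional cell of the tessellation induced by $\A$ in the subspace $G(i)=\bigcap_{H\in\A,\,H\supseteq F}H=\lin(F)$, that $L\cap G(i)$ is in general position with respect to that induced arrangement, and then cites Lemma~3.5 of~\cite{KVZ15} for the full-dimensional case in the ambient space $G(i)$. You instead reprove the substance of that cited lemma from scratch: after the same localization to $M=\lin(F)$, you take the minimal face $G$ of the complex containing a nonzero point $z\in F\cap L$ in its relative interior, apply the general position hypothesis to \emph{both} intersection subspaces $M=\lin(F)$ and $M_G=\lin(G)$ to obtain the transversality identity $(L\cap M)+M_G=M$, and then perturb $z$ inside $L\cap M$ into $\relint(F)$ using the local product structure $M_G\oplus P$ of $F$ at $z$. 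Each step checks out (in particular $M_G\subseteq M$, so $(L\cap M)\cap M_G=L\cap M_G$ and the dimension count closes), and the argument makes explicit exactly where general position with respect to the higher intersections of $\A$, not just the individual hyperplanes, is used. What your approach buys is self-containedness and transparency about the mechanism; what the paper's buys is brevity by outsourcing to~\cite{KVZ15}. Your caveat about faces that are themselves nontrivial linear subspaces (where $0\in\relint(F)\cap L$ makes the right-hand side vacuously true) is well taken; the paper's statement carries the same implicit restriction, and the faces arising in its applications are never linear subspaces, so nothing is lost.
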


\begin{proof}
Let $\mathcal{A}=\{H_1,\ldots,H_n\}$. We define $G(i):=H_{i_1}\cap\ldots\cap H_{i_k}$ for all $k\le d$ and $1\le i_1<\ldots<i_k\le n$. Then, the linear subspace $L\cap G(i)$ is in general position with respect to the induced hyperplane arrangement $\mathcal{A}|G(i):= \{H_j \cap G(i): H_j \nsupseteq G(i), j=1,\ldots,n\}$.   Since every face $F\in\mathcal{F}(\xbar{\mathcal{R}}(\A))$ is contained in such a subspace $G(i)$ for a suitable collection of indices and is furthermore a cone of the induced tessellation $\xbar{\mathcal{R}}(\A|G(i))$, Lemma 3.5 from~\cite{KVZ15} applied to the linear subspace $L\cap G(i)$ in the ambient space $G(i)$ yields
\begin{align*}
F\cap L\neq\{0\}\Leftrightarrow F\cap (L\cap G(i))\neq\{0\}\Leftrightarrow \relint(F)\cap (L\cap G(i))\neq\emptyset\Leftrightarrow\relint(F)\cap L\neq\emptyset,
\end{align*}
which completes the proof.
\end{proof}

\begin{proof}[Proof of Lemma~\ref{Lem_Connection_Face_WeylFace}]
Without loss of generality we restrict ourselves to the special case $\sigma(i)=i$ and $\eps_i=1$,
for all $i=1,\ldots,n$. The general case follows by applying a suitable signed permutation of the coordinates.
For $1\le l_1<\ldots<l_{n-d+k}\le n$ we define
\begin{align*}
F:=\{v\in \R^d: f_1=\ldots=f_{l_1}\le f_{l_1+1}=\ldots =f_{l_2}\le\ldots\le f_{l_{n-d+k}+1}=\ldots=f_{n}=0\},
\end{align*}
which is just a shorthand notation for $F^B_{\eps,\sigma}(l_1,\ldots,l_{n-d+k})$ in the special case $\eps_i=1$ and $\sigma(i)=i$. Note that the linear functionals $f_i$ are defined by $f_i=f_i(v):=\langle v,y_i\rangle$ as above. We already saw in Proposition~\ref{Prop_Char_WeylFaces} that if $F$ is not $\{0\}$, it is a $k$-face of the Weyl tessellation $\mathcal{W}^B(y_1,\ldots,y_n)$, due to the general position assumption \ref{label_GP1}. Then, the linear hull of $F$ is given by
\begin{align*}
\lin F=\{v\in\R^d:f_1=\ldots=f_{l_1},f_{l_1+1}=\ldots=f_{l_2},\ldots,f_{l_{n-d+k}+1}=\ldots=f_n=0\},
\end{align*}
since the condition of $\lin F$ consists of $d-k$ independent equations, and therefore, the general position assumption \ref{label_GP1} implies that $\dim (\lin F)=d-(d-k)=k$ holds true. Using the identity $(C\cap D)^\circ=C^\circ+D^\circ$ which is valid for arbitrary polyhedral cones, see~\cite[Proposition~2.5]{Amelunxen2017},  together with  the well-known fact that $(\pos\{x_1,\ldots,x_n\})^\circ=\bigcap_{i=1}^n x_i^-$ for all $x_1,\ldots,x_n\in\R^d$, we get
\begin{align*}
F^\circ
&	=\bigg(\lin F\cap\bigcap_{i=1}^{n-d+k}(y_{l_i}-y_{l_i+1})^-\bigg)^\circ=(\lin F)^\circ+\bigg(\bigcap_{i=1}^{n-d+k}(y_{l_i}-y_{l_i+1})^-\bigg)	^\circ\\
&	=(\lin F)^\perp+\pos\{y_{l_1}-y_{{l_1}+1},\ldots,y_{l_{n-d+k}}-y_{{l_{n-d+k}}+1}\},
\end{align*}
where we set $y_{n+1}=0$. Thus, we get
\begin{align*}
F^\circ\cap \lin F=\pos\{\Pi(y_{l_1}-y_{{l_1}+1}),\ldots,\Pi(y_{l_{n-d+k}}-y_{{l_{n-d+k}}+1})\},
\end{align*}
where $\Pi:\R^d\to \lin (F)$ is the orthogonal projection onto $\lin (F)$. In order to prove this claim, we can represent the vectors $y_{l_i}-y_{l_i+1}$  as $(z_i,x_i)$, where $z_i$ is the projection on $\lin F$, and $x_i$ is the projection on  $(\lin F)^\perp$. Thus, the vectors in $F^\circ= (\lin F)^\perp+\pos\{y_{l_1}-y_{{l_1}+1},\ldots,y_{l_{n-d+k}}-y_{{l_{n-d+k}}+1}\}$ take the form
\begin{align*}
\begin{pmatrix}
\alpha_1z_1+\ldots + \alpha_{n-d+k}z_{n-d+k}\\
v+\alpha_1x_1+\ldots+\alpha_{n-d+k}x_{n-d+k}
\end{pmatrix}
\end{align*}
for $\alpha_1,\ldots,\alpha_{n-d+k}\ge 0$ and $v\in (\lin F)^\perp$. Here, the first entry denotes the component in $\lin F$ and the second entry denotes the component in $(\lin F)^\perp$. Such a vector is contained in $\lin F$ if and only if the second component is $0$, that is, $-v=\alpha_1x_1+\ldots+\alpha_{n-d+k}x_{n-d+k}$. Therefore, $F^\circ\cap \lin F = \pos(z_1,\ldots,z_{n-d+k})$.

Taking all of that into consideration, we obtain
\begin{align*}
F=\{0\}
&\Leftrightarrow F^\circ=\R^d\\
&\Leftrightarrow F^\circ\cap\lin F=\lin F\\
&\Leftrightarrow \pos\{\Pi(y_{l_1}-y_{{l_1}+1}),\ldots,\Pi(y_{l_{n-d+k}}-y_{{l_{n-d+k}}+1})\}=\lin F.
\end{align*}
Note that we used the decomposition $F^\circ=(\lin F)^\perp+(F^\circ\cap \lin F)$ for the second equivalence. Applying Lemma~\ref{Lemma_Positivkombination_Null} in the ambient $k$-dimensional linear subspace $\lin F$, we have that $\lin F=\pos\{\Pi(y_{l_1}-y_{{l_1}+1}),\ldots,\Pi(y_{l_{n-d+k}}-y_{{l_{n-d+k}}+1})\}$ if and only if there exist $\alpha_{l_1},\ldots\alpha_{l_{n-d+k}}\ge 0$ that do not vanish simultaneously and such that
\begin{align*}
0
&	=\alpha_{l_1}\Pi(y_{l_1}-y_{l_1+1})+\ldots+\alpha_{l_{n-d+k}}\Pi(y_{l_{n-d+k}}-y_{l_{n-d+k}+1})\\
&	=\Pi\big(\alpha_{l_1}(y_{l_1}-y_{l_1+1})+\ldots+\alpha_{l_{n-d+k}}(y_{l_{n-d+k}}-y_{l_{n-d+k}+1})\big).
\end{align*}
This holds if and only if there exists an $\alpha=(\alpha_1,\ldots,\alpha_n)\in\R^n$ with $\alpha_{l_1},\ldots,\alpha_{l_{n-d+k}}\ge 0$ not all being 0, such that
\begin{align*}
0=\alpha_1(y_1-y_2)+\ldots+\alpha_{n-1}(y_{n-1}-y_n)+\alpha_ny_n,
\end{align*}
since $(\lin (F))^\perp=\lin\{y_i-y_{i+1}:i\in\{1,\ldots,n\}\backslash\{l_1,\ldots,l_{n-d+k}\}\}$, $y_{n+1}:=0$.
After regrouping the terms, the condition takes  the form
\begin{align*}
0	=\alpha_1y_1+(\alpha_2-\alpha_1)y_2+\ldots+(\alpha_n-\alpha_{n-1})y_n.
\end{align*}
By defining $\beta_1=\alpha_1$, $\beta_i=\alpha_i-\alpha_{i-1}$ for $i=2,\ldots,n$, we see that this is equivalent to the existence of a vector $\beta\in\R^n$ with $\beta_1+\ldots+\beta_{l_1}\ge 0, \beta_1+\ldots+\beta_{l_2}\ge 0,\ldots,\beta_1+\ldots+\beta_{l_{n-d+k}}\ge 0$, where at least one inequality is strict, such that
\begin{align*}
0=\beta_1y_1+\ldots+\beta_{n}y_{n}.
\end{align*}
By defining $M:= \{\beta\in\R^n:\beta_1+\ldots+\beta_{l_1}\ge 0,\ldots,\beta_1+\ldots+\beta_{l_{n-d+k}}\ge 0\}$, recalling that $L=\{\beta\in\R^n:\beta_1y_1+\ldots+\beta_ny_n=0\}$ and taking the previous results into account, we get
\begin{align*}
F=\{0\}\Leftrightarrow M\cap L\nsubseteq\linsp (M),
\end{align*}
since $\linsp M:= M\cap(-M)=\{\beta\in\R^n:\beta_1+\ldots+\beta_{l_1}=0,\ldots,\beta_1+\ldots+\beta_{l_{n-d+k}}=0\}$ is the lineality space of $M$. Using Lemma~\ref{Lemma_Cone_Subspace_LinSp}, we get
\begin{align*}
M\cap L\nsubseteq\linsp (M)\Leftrightarrow\relint(M^\circ)\cap L^\perp=\emptyset.
\end{align*}
For the dual cone of $M$, the following holds:
\begin{align*}
M^\circ
&	=\big(\{\beta\in\R^n:\langle(\overbrace{1,\ldots,1}^{l_1},0,\ldots,0)^T,\beta\rangle\ge 0,\ldots,\langle(\overbrace{1,\ldots,1}^{l_{n-d+k}},0\ldots,0)^T,\beta\rangle\ge 0\}\big)^\circ\\
&	=-\pos\big\{(\overbrace{1,\ldots,1}^{l_1},0,\ldots,0)^T,\ldots,(\overbrace{1,\ldots,1}^{l_{n-d+k}},0\ldots,0)^T\big\}\\
&	=\{x\in\R^n:x_1=\ldots=x_{l_1}\le x_{l_1+1}=\ldots=x_{l_2}\le\ldots\le x_{l_{n-d+k}+1}=\ldots=x_n=0\}\\
&	=:G,
\end{align*}
where $G$ is just a shorthand notation for the $(n-d+k)$-dimensional face $C^B_{\eps,\sigma}(l_1,\ldots,l_{n-d+k})$ of the Weyl chambers of type $B_n$ in the special case $\eps_i=1$ and $\sigma(i)=i$.
Taking all equivalences into consideration, applying Lemma~\ref{Lemma_General_Position_Boundary_Faces} under the assumption of~\ref{label_GP2}, we get
\begin{align*}
F=\{0\}\Leftrightarrow\relint(G)\cap L^\perp=\emptyset\Leftrightarrow G\cap L^\perp=\{0\},
\end{align*}
which completes the proof of the equivalence (\ref{Eq_Lemma_Equivalence_Weylfaces}).
\end{proof}

\subsubsection*{Type $A_{n-1}$}

The $A_{n-1}$-case can be treated in a similar way. The \textit{closed Weyl chambers of type $A_{n-1}$} are the cones of the conical tessellation in $\R^n$ (not $\R^{n-1}$!) induced by the hyperplane arrangement $\A(A_{n-1})$ consisting of the hyperplanes given by
\begin{align}\label{Ey_Reflection_Arrangement_An-1}
\{\beta\in\R^n:\beta_i=\beta_j\}\quad (1\le i<j\le n).
\end{align}
It is called the \textit{reflection arrangement of type $A_{n-1}$}. Thus, the closed Weyl chambers of type $A_{n-1}$ are given by
\begin{align*}
C^A_{\sigma}:=\{(\beta_1,\ldots,\beta_n)\in\R^n:\beta_{\sigma(1)}\le\ldots\le\beta_{\sigma(n)}\},\quad \sigma\in\mathcal{S}_n.
\end{align*}
The $k$-dimensional faces of the Weyl chambers $C^A_{\sigma}$ are determined by  collections of indices $1\le l_1<\ldots<l_{k-1}\le n-1$ and have the form
\begin{multline}\label{Eq_Rep_Weyl_Chamber_Faces_An-1}
C^A_{\sigma}(l_1,\ldots,l_{k-1}):=\{\beta\in\R^n:\beta_{\sigma(1)}=\ldots=\beta_{\sigma(l_1)}\le \beta_{\sigma(l_1+1)}=\ldots=\beta_{\sigma(l_2)}\le\\\ldots\le \beta_{\sigma(l_{k-1}+1)}=\ldots=\beta_{\sigma(n)}\}.
\end{multline}
Thus, the number of $k$-faces of $C_\sigma^A$ is given by $f_k(C_\sigma^A)=\binom{n-1}{k-1}$.
The next theorem gives a general position assumption equivalent to~\ref{label_GP1_A}.
Its proof is similar to the proof of Theorem~\ref{Theorem_Aequivalenz_B1_B2}; see Section~\ref{Subsection_Proof_GeneralPosition}.

\begin{satz}\label{Theorem_Aequivalenz_A1_A2}
For arbitrary $y_1,\ldots,y_n\in\R^d$ the following conditions \ref{label_GP1_A} and \ref{label_GP2_A} are equivalent:
\begin{enumerate}[label=(A\arabic*), leftmargin=50pt]
\item For every  $\sigma\in\mathcal{S}_n$ the vectors $y_{\sigma(1)}-y_{\sigma(2)},y_{\sigma(2)}-y_{\sigma(3)},\ldots,y_{\sigma(n-1)}-y_{\sigma(n)}$ are in general position, and $n\ge d+1$.
\item The linear subspace $L^\perp$ has dimension $d$ and is in general position with respect to the hyperplane arrangement $\mathcal{A}(A_{n-1})$, where $L:=\{\beta\in\R^n:\beta_1y_1+\ldots+\beta_ny_n=0\}$, and $n\ge d+1$.\label{label_GP2_A}
\end{enumerate}
\end{satz}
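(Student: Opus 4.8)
The plan is to convert each of~\ref{label_GP1_A} and~\ref{label_GP2_A} into a statement about the dimensions of the spans of the difference vectors $y_i-y_j$ indexed by set partitions of $\{1,\ldots,n\}$, and then to reconcile the ``path'' structure built into~\ref{label_GP1_A} with the ``partition'' structure built into~\ref{label_GP2_A}. This parallels the proof of Theorem~\ref{Theorem_Aequivalenz_B1_B2}, the type-$B$ case being only notationally heavier because of the extra sign vectors and coordinate hyperplanes.

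First I would reformulate~\ref{label_GP2_A}. Consider the linear map $\iota\colon\R^d\to\R^n$, $\iota(v)=(\langle y_1,v\rangle,\ldots,\langle y_n,v\rangle)$; its image is $L^\perp$, so $\dim L^\perp=d$ is equivalent to $\iota$ being injective, i.e.\ to $y_1,\ldots,y_n$ spanning $\R^d$. The intersections $\bigcap_{H\in\mathcal B}H$ of hyperplanes from $\mathcal A(A_{n-1})$ are exactly the partition subspaces $U_\pi=\{\beta\in\R^n:\beta\text{ is constant on each block of }\pi\}$, and $\dim U_\pi$ equals the number $k$ of blocks. A short computation gives $L^\perp\cap U_\pi=\iota(W_\pi^\perp)$, where $W_\pi:=\lin\{y_i-y_j: i,j\text{ lie in the same block of }\pi\}$, and hence, by injectivity of $\iota$, $\dim(L^\perp\cap U_\pi)=d-\dim W_\pi$. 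Plugging this into the general-position identity from definition~(iii), namely $\dim(L^\perp\cap U_\pi)=\max\{0,d+k-n\}$, condition~\ref{label_GP2_A} becomes: $\dim L^\perp=d$, $n\ge d+1$, and $\dim W_\pi=\min\{d,\,n-k\}$ for every partition $\pi$ with $k$ blocks. Since one always has $\dim W_\pi\le\min\{d,n-k\}$, this says precisely that every $W_\pi$ attains its maximal possible dimension.

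Next I would reformulate~\ref{label_GP1_A}. The consecutive differences $y_{\sigma(1)}-y_{\sigma(2)},\ldots,y_{\sigma(n-1)}-y_{\sigma(n)}$ are the oriented edges of the Hamiltonian path $\sigma(1)\!-\!\cdots\!-\!\sigma(n)$, and choosing $d$ of them amounts to choosing $d$ edges of a path, i.e.\ an edge set that is a disjoint union of paths (a linear forest); conversely every linear forest with $d$ edges extends to a Hamiltonian path. Thus~\ref{label_GP1_A} is equivalent to: every family of $d$ difference vectors whose edge set forms a linear forest is linearly independent (together with $n\ge d+1$). The combinatorial bridge is the observation that for any acyclic edge set $F$ the differences along $F$ span exactly $W_{\pi_F}$, where $\pi_F$ is the partition into connected components of $F$; consequently these $|F|$ differences are independent if and only if $\dim W_{\pi_F}=|F|$, a condition depending on $F$ only through $\pi_F$. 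Since every partition admits a spanning linear forest (arrange each block as a path), $\dim W_\pi=n-k$ holds if and only if the spanning linear forest of $\pi$ has independent differences.

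With both conditions in this form, the two implications become routine. For~\ref{label_GP1_A}$\Rightarrow$\ref{label_GP2_A}, a single $d$-edge linear forest already forces $\lin\{y_1,\ldots,y_n\}=\R^d$; then for a partition $\pi$ with $k$ blocks one takes a spanning linear forest and, when $n-k\le d$, extends it to a $d$-edge linear forest (possible since $d\le n-1$) to conclude $\dim W_\pi=n-k$, while if $n-k>d$ one applies~\ref{label_GP1_A} to any $d$ of its edges to get $\dim W_\pi=d$; in both regimes $\dim W_\pi=\min\{d,n-k\}$. For~\ref{label_GP2_A}$\Rightarrow$\ref{label_GP1_A}, a $d$-edge linear forest $F$ has component partition with $k=n-d$ blocks, so $n-k=d$ and~\ref{label_GP2_A} yields $\dim W_{\pi_F}=d=|F|$; as $F$ spans $W_{\pi_F}$, its differences are independent. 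The main obstacle is precisely this bridging step: \ref{label_GP1_A} constrains only path-shaped edge sets, whereas \ref{label_GP2_A} a priori concerns arbitrary partitions, and the argument hinges on the facts that independence of a forest's differences depends only on its component partition and that every partition is realized by a linear forest, together with the extension argument that upgrades a short linear forest to one with exactly $d$ edges.
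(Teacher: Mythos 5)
Your proposal is correct, and at its core it rests on the same duality the paper uses: subsets of the consecutive differences correspond to partition subspaces $U_\pi$ in the intersection lattice of $\mathcal A(A_{n-1})$, and $\dim(L^\perp\cap U_\pi)=d-\dim W_\pi$ (the paper phrases this as $K^\perp\cap L\neq\{0\}$ for $K=U_\pi$, obtained by tracking an explicit linear dependence through $\Ker A$). The packaging, however, is genuinely cleaner than the paper's. The paper proves the type-$B$ version by contraposition in both directions, manipulating explicit telescoping linear combinations and handling the regime $k<n-d$ by enlarging $K'$ to a subspace $K''$ of dimension exactly $n-d$; for type $A$ it then just says ``similar.'' You instead translate both conditions into a single clean statement --- $\dim W_\pi=\min\{d,n-k\}$ for every partition $\pi$ with $k$ blocks --- and bridge the path structure of \ref{label_GP1_A} with the partition structure of \ref{label_GP2_A} via the linear-forest dictionary (independence of a forest's differences depends only on its component partition; every partition is realized by a spanning linear forest; short linear forests extend to $d$-edge ones since $d\le n-1$). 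This last extension step plays exactly the role of the paper's $K''\supseteq K'$ enlargement, but stated on the combinatorial side. What your version buys is a direct, case-light argument and an explicit explanation of \emph{why} the path-shaped constraints of \ref{label_GP1_A} suffice to control arbitrary partitions; what the paper's version buys is uniformity with the signed-permutation bookkeeping needed for type $B$, where the extra coordinate hyperplanes and sign choices make the coordinate computation the path of least resistance. All the individual steps you use (image of $\iota$ equals $L^\perp$, $\dim W_\pi\le\min\{d,n-k\}$, forests spanning $W_{\pi_F}$, completion of linear forests to Hamiltonian paths) check out.
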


 The following lemma is an analogue to Lemma~\ref{Lem_Connection_Face_WeylFace}. Recall the notation $F^A_\sigma(l_1,\ldots,l_{n-d+k-1})$ for the $k$-faces of the Weyl tessellation of type $A_{n-1}$ from \eqref{Eq_Rep_Weyl_Faces_An-1}.

\begin{lem}\label{Lem_Connection_Faces_Chamber_An-1}
Let $1\le k\le d$ and let $y_1,\ldots,y_n\in\R^d$ satisfy one of the equivalent general position assumptions \ref{label_GP1_A} or \ref{label_GP2_A}. Define  $L=\{\beta\in\R^n:\beta_1y_1+\ldots+\beta_ny_n=0\}$. The equivalence
\begin{align*}
F_{\sigma}^A(l_1,\ldots,l_{n-d+k-1})=\{0\}\Leftrightarrow C^A_{\sigma}(l_1,\ldots,l_{n-d+k-1})\cap L^\perp=\{0\}
\end{align*}
holds true for all $1\le l_1<\ldots<l_{n-d+k-1}\le n-1$ and $\sigma\in\mathcal{S}_n$.
\end{lem}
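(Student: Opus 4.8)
The plan is to run the same argument as in the proof of Lemma~\ref{Lem_Connection_Face_WeylFace}, keeping the skeleton intact but dropping the sign vector $\eps$ and the standalone vector $y_n$, and then absorbing the one genuine structural change that type $A_{n-1}$ forces. As in that proof, by applying a permutation of the coordinates of $\R^n$ I would reduce to $\sigma=\mathrm{id}$, so that with $f_i(v)=\langle v,y_i\rangle$ I write
\[
F := F^A_{\mathrm{id}}(l_1,\ldots,l_{n-d+k-1}) = \{v\in\R^d : f_1=\ldots=f_{l_1}\le\ldots\le f_{l_{n-d+k-1}+1}=\ldots=f_n\}.
\]
If $F\ne\{0\}$, then by Proposition~\ref{Prop_Char_Weyl_Faces_An-1} it is a $k$-face with $\dim\lin F=k$ and $(\lin F)^\perp=\lin\{y_i-y_{i+1}:i\in\{1,\ldots,n-1\}\setminus\{l_1,\ldots,l_{n-d+k-1}\}\}$; note the crucial absence of a standalone $y_n$, since the defining chain has no trailing ``$\le 0$''.

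First I would compute the dual cone exactly as in the $B_n$-case. Writing $F=\lin F\cap\bigcap_{j=1}^{n-d+k-1}(y_{l_j}-y_{l_j+1})^-$ and using the identity $(\lin F\cap\bigcap(\cdot))^\circ=(\lin F)^\perp+\pos\{y_{l_j}-y_{l_j+1}\}$ together with the decomposition $F^\circ=(\lin F)^\perp+(F^\circ\cap\lin F)$, the condition $F=\{0\}$ (equivalently $F^\circ=\R^d$) reduces to $\pos\{\Pi(y_{l_j}-y_{l_j+1})\}=\lin F$, where $\Pi$ is the orthogonal projection onto $\lin F$. Applying Lemma~\ref{Lemma_Positivkombination_Null} in the $k$-dimensional space $\lin F$ rephrases this as the existence of coefficients $\alpha_{l_1},\ldots,\alpha_{l_{n-d+k-1}}\ge 0$, not all zero, with $\sum_j\alpha_{l_j}(y_{l_j}-y_{l_j+1})\in(\lin F)^\perp$, which, after writing the orthogonal-complement membership out in the spanning differences, becomes the existence of $\alpha=(\alpha_1,\ldots,\alpha_{n-1})\in\R^{n-1}$ (free outside the indices $l_j$, nonnegative on them, not all $\alpha_{l_j}$ zero) with $\sum_{i=1}^{n-1}\alpha_i(y_i-y_{i+1})=0$.

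The genuine difference from the $B_n$-case appears at the substitution. Setting $\beta_1=\alpha_1$, $\beta_i=\alpha_i-\alpha_{i-1}$ for $2\le i\le n-1$, and $\beta_n=-\alpha_{n-1}$, I would rewrite the relation as $\sum_{i=1}^n\beta_i y_i=0$, i.e.\ $\beta\in L$; because there is no standalone $y_n$-term, the substitution now forces $\beta_1+\ldots+\beta_n=0$ automatically, and $\beta_1+\ldots+\beta_{l_j}=\alpha_{l_j}$. Hence $F=\{0\}$ is equivalent to $M\cap L\nsubseteq\linsp(M)$, where
\[
M:=\Big\{\beta\in\R^n:\ \textstyle\sum_{i=1}^n\beta_i=0,\ \ \beta_1+\ldots+\beta_{l_1}\ge 0,\ \ldots,\ \beta_1+\ldots+\beta_{l_{n-d+k-1}}\ge 0\Big\}
\]
now carries the extra equality $\sum_i\beta_i=0$ relative to its $B_n$-counterpart. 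By Lemma~\ref{Lemma_Cone_Subspace_LinSp} this is equivalent to $\relint(M^\circ)\cap L^\perp=\emptyset$, and computing $M^\circ$ as the sum of $\R(1,\ldots,1)$ (the dual of the hyperplane $\{\sum_i\beta_i=0\}$) and $-\pos\{w_1,\ldots,w_{n-d+k-1}\}$ with $w_j=(1,\ldots,1,0,\ldots,0)$ ($l_j$ ones) yields precisely $M^\circ=C^A_{\mathrm{id}}(l_1,\ldots,l_{n-d+k-1})$ from \eqref{Eq_Rep_Weyl_Chamber_Faces_An-1}: the added line $\R(1,\ldots,1)$ dissolves the ``$=0$'' last block that was present in the $B_n$-computation, leaving the unconstrained final block characteristic of a type $A_{n-1}$ chamber face. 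Finally, applying Lemma~\ref{Lemma_General_Position_Boundary_Faces} to the arrangement $\A(A_{n-1})$ and the subspace $L^\perp$, which is in general position with respect to it by \ref{label_GP2_A}, turns $\relint(M^\circ)\cap L^\perp=\emptyset$ into $C^A_{\mathrm{id}}(l_1,\ldots,l_{n-d+k-1})\cap L^\perp=\{0\}$, closing the chain of equivalences.

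I expect the main obstacle to be the bookkeeping around the extra constraint $\sum_i\beta_i=0$. One must verify that the substitution is a bijection between $\{\beta:\sum_i\beta_i=0\}$ and $\R^{n-1}$ (so nothing is lost in passing from $\alpha$ to $\beta$), that $\linsp(M)$ is exactly the set on which $\sum_i\beta_i=0$ together with all partial sums at the $l_j$ vanish (so that ``not all $\alpha_{l_j}$ zero'' translates to $\beta\notin\linsp(M)$), and — most delicately — that the resulting $M^\circ$ is the genuine type $A_{n-1}$ chamber face with a \emph{free} last block rather than the $B_n$-type face with a vanishing last block. Everything else is a faithful transcription of the $B_n$-argument with $\eps$ and $y_n$ deleted.
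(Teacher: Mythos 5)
Your proposal is correct and follows essentially the same route as the paper's own proof: reduce to $\sigma=\mathrm{id}$, dualize and project onto $\lin F$, invoke Lemma~\ref{Lemma_Positivkombination_Null}, pass from the $\alpha$'s to the $\beta$'s (with $\sum_i\beta_i=0$ now forced automatically since there is no standalone $y_n$), define the cone $M$ with that extra equality, and finish via Lemma~\ref{Lemma_Cone_Subspace_LinSp}, the computation $M^\circ=C^A_{\mathrm{id}}(l_1,\ldots,l_{n-d+k-1})$, and Lemma~\ref{Lemma_General_Position_Boundary_Faces} under \ref{label_GP2_A}. The bookkeeping points you flag (bijectivity of the substitution, the identification of $\linsp(M)$, and the free rather than vanishing last block of $M^\circ$) all check out exactly as in the paper.
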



\begin{proof}
The proof is similar to that of Lemma~\ref{Lem_Connection_Face_WeylFace} and we will not explain each argument in full detail.
Let $1\le l_1<\ldots<l_{n-d+k-1}\le n-1$.  Applying a permutation of the coordinates, we may restrict ourselves to the special case $\sigma(i)=i$ for all $i\in \{1,\ldots,n\}$. Define
\begin{align*}
F=\{v\in\R^d:f_1=\ldots= f_{l_1}\le f_{l_1+1}=\ldots =f_{l_2}\le\ldots\le f_{l_{n-d+k-1}+1}=\ldots=f_n\},
\end{align*}
where the $f_i$'s are the functionals defined by $f_i=f_i(v):=\langle v,y_i\rangle$. Then, we have
\begin{align*}
F=\{0\}&\Leftrightarrow F^\circ\cap\lin F=\lin F\\
&\Leftrightarrow\pos\{\Pi(y_{l_1}-y_{{l_1}+1}),\ldots,\Pi(y_{l_{n-d+k-1}}-y_{l_{n-d+k-1}+1})\}=\lin F,
\end{align*}
where $\Pi$ denotes the orthogonal projection onto $\lin F$. Using Lemma~\ref{Lemma_Positivkombination_Null} and the general position assumption \ref{label_GP1_A}, this is equivalent to the fact that there are $\alpha_{l_1},\ldots\alpha_{l_{n-d+k-1}}\ge 0$ that do not vanish simultaneously and satisfy
\begin{align*}
0
&
	=\Pi\big(\alpha_{l_1}(y_{l_1}-y_{l_1+1})+\ldots+\alpha_{l_{n-d+k-1}}(y_{l_{n-d+k-1}}-y_{l_{n-d+k-1}+1})\big).
\end{align*}
In view of
\begin{align*}
(\lin F)^\perp=\lin\{y_1-y_2,&\ldots,y_{l_1-1}-y_{l_1},\ldots,y_{l_{n-d+k-1}+1}-y_{l_{n-d+k-1}+2},\ldots,y_{n-1}-y_n\},
\end{align*}
the above is equivalent to the existence of a vector $(\alpha_1,\ldots,\alpha_{n-1})\in\R^{n-1}$, where $\alpha_{l_1},\ldots,\alpha_{l_{n-d+k-1}}\ge 0$ do not vanish simultaneously, such that
\begin{align*}
0	=\alpha_1(y_1-y_2)+\ldots+\alpha_{n-1}(y_{n-1}-y_n).
\end{align*}
After regrouping the terms, the condition takes the form
\begin{align*}
0	=\alpha_1y_1+(\alpha_2-\alpha_1)y_2+\ldots+(\alpha_{n-1}-\alpha_{n-2})y_{n-1}+(-\alpha_{n-1})y_n.
\end{align*}
By setting $\beta_1:=\alpha_1,\beta_i:=\alpha_i-\alpha_{i-1}$, $2\le i\le n-1$ and $\beta_n:=-\alpha_{n-1}=-(\beta_1+\ldots+\beta_{n-1})$, this is equivalent to the fact that there exists a vector $(\beta_1,\ldots,\beta_n)\in\R^n$ satisfying $\beta_1+\ldots+\beta_n=0$ and
\begin{align*}
\beta_1+\ldots+\beta_{l_1}\ge 0,\beta_1+\ldots+\beta_{l_2}\ge 0,\ldots,\beta_{1}+\ldots+\beta_{l_{n-d+k-1}}\ge 0,
\end{align*}
where at least one inequality is strict, such that $\beta_1y_1+\ldots+\beta_ny_n=0$. Similarly to the proof of Lemma~\ref{Lem_Connection_Face_WeylFace}, we define $M:=\{\beta\in\R^n:\beta_1+\ldots+\beta_{l_1}\ge 0,\ldots,\beta_1+\ldots+\beta_{l_{n-d+k-1}}\ge 0,\beta_1+\ldots+\beta_n=0\}$ and obtain
\begin{align*}
F=\{0\}\Leftrightarrow L\cap M\nsubseteq \linsp(M)\Leftrightarrow\relint(M^\circ)\cap L^\perp=\emptyset.
\end{align*}
Observe that
\begin{align*}
M^\circ
&	=\{x\in\R^d:x_1=\ldots=x_{l_1}\le x_{l_1+1}=\ldots=x_{l_2}\le\ldots\le x_{l_{n-d+k-1}+1}=\ldots=x_n\}.
\end{align*}
Note that $M^\circ$ coincides with the $k$-face $D^A_\sigma(l_1,\ldots,l_{n-d+k-1})$ of the Weyl tessellation of type $A_{n-1}$, in the special case $\sigma(i)=i$. Using Lemma~\ref{Lemma_General_Position_Boundary_Faces} and \ref{label_GP2_A}, we get
\begin{align*}
F=\{0\}\Leftrightarrow\relint(M^\circ)\cap L^\perp=\emptyset\Leftrightarrow M^\circ \cap L^\perp=\{0\},
\end{align*}
which proves  Lemma~\ref{Lem_Connection_Faces_Chamber_An-1}.
\end{proof}

\subsection{Proof of Propositions~\ref{prop:faces_with_multiplicity_A} and~\ref{prop:faces_with_multiplicity_B}}
\label{sec:proof_faces_multiplicity_final_part}

In the previous section, we reduced our problem to counting faces of Weyl chambers that are intersected by a linear subspace. A formula for this quantity is stated in the following two theorems. They were proven in \cite[Theorems~2.1 and~2.8]{Kabluchko2019}.

\begin{satz}\label{Theorem_Weyl_Faces_Intersected}
Let $L_d\in G(n,d)$ be a deterministic $d$-dimensional subspace of $\R^n$ in general position with respect to the reflection arrangement $\mathcal{A}(B_n)$. Then
\begin{align*}
\sum_{\eps\in\{\pm 1\}^n}\:\sum_{\sigma\in\mathcal{S}_n}\:\sum_{F\in\F_k(C^B_{\eps,\sigma})}\1_{\{F\cap L_d\neq\{0\}\}}
&	\quad=2^{n-k+1}\binom{n}{k}\frac{n!}{k!}\big(\stirlingb k{n-d+1}+\stirlingb k{n-d+3}+\ldots\big)\\
&	\quad=2^{n-k}\binom{n}{k}\frac{n!}{k!}D^B(k,d-n+k),
\end{align*}
where the $\stirlingb kj$'s are defined in~\eqref{eq:def_stirling1b}.
\end{satz}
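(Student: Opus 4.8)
The plan is to reduce the statement to the special case of full-dimensional chambers and then settle the combinatorics with a generating function. Since the $2^n n!$ chambers $C^B_{\eps,\sigma}$ are pairwise distinct and, within a fixed chamber, the $k$-faces correspond bijectively to the index sets $1\le l_1<\ldots<l_k\le n$ via \eqref{Eq_Rep_Weyl_Chamber_Faces_Bn}, the left-hand side equals the number of tuples $(\eps,\sigma,l_1,\ldots,l_k)$ for which $C^B_{\eps,\sigma}(l_1,\ldots,l_k)\cap L_d\neq\{0\}$. By Lemma~\ref{Lemma_General_Position_Boundary_Faces} this condition may be replaced by $\relint(F)\cap L_d\neq\emptyset$, and so depends only on the geometric face $F=C^B_{\eps,\sigma}(l_1,\ldots,l_k)$.

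The core step I would carry out is to analyze a single $k$-face $F$ through its linear hull $V:=\lin F$, a $k$-dimensional flat of the intersection lattice of $\mathcal A(B_n)$. Writing $t_j$ for the common value of the $j$-th block of $F$, the assignment $\beta\mapsto(t_1,\ldots,t_k)$ is a linear isomorphism $V\to\R^k$ under which $F$ becomes the fundamental chamber $\{t_1\le\ldots\le t_k\le 0\}$ and, by inspecting how each hyperplane $\{\beta_i=\beta_j\}$, $\{\beta_i=-\beta_j\}$, $\{\beta_i=0\}$ of $\mathcal A(B_n)$ restricts to $V$, the induced arrangement $\mathcal A(B_n)|V$ becomes exactly the full type-$B_k$ arrangement $\{t_u=\pm t_v\}\cup\{t_u=0\}$. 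A dimension count identical in spirit to the one in the proof of Lemma~\ref{Lemma_General_Position_Boundary_Faces} shows that general position of $L_d$ with respect to $\mathcal A(B_n)$ is inherited by $V\cap L_d$ with respect to $\mathcal A(B_n)|V$; in particular $\dim(V\cap L_d)=p:=d-n+k$ when $p\ge 1$, while $F\cap L_d=\{0\}$ is forced when $p\le 0$, matching the vanishing of the right-hand side in that range. Hence $F\cap L_d\neq\{0\}$ if and only if the fundamental $B_k$-chamber meets the generic $p$-dimensional subspace $V\cap L_d$ nontrivially.

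I would then group the tuples first by $V$ and then by the chamber $F$ of $\mathcal A(B_n)|V$. For fixed $V$, the number of tuples producing a given such chamber is $2^{b_0}b_0!\,b_1!\cdots b_k!$, where $b_0,b_1,\ldots,b_k$ are the block sizes of $V$ (permuting coordinates inside each nonzero block, and permuting and sign-flipping inside the zero block, leaves $F$ unchanged), a quantity depending only on $V$. The number of chambers $F$ of $\mathcal A(B_n)|V\cong\mathcal A(B_k)$ meeting the generic $p$-subspace $V\cap L_d$ equals $D^B(k,p)=D^B(k,d-n+k)$ by the full-dimensional case, which is the genuine analytic input I would import from \cite[Theorem~2.1]{Kabluchko2019} (the $B$-analogue of Schläfli's formula). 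Therefore
\begin{align*}
\sum_{\eps\in\{\pm1\}^n}\sum_{\sigma\in\mathcal S_n}\sum_{F\in\F_k(C^B_{\eps,\sigma})}\1_{\{F\cap L_d\neq\{0\}\}}
=D^B(k,d-n+k)\sum_{V}2^{b_0(V)}b_0(V)!\,b_1(V)!\cdots b_k(V)!,
\end{align*}
the sum running over all $k$-dimensional flats $V$ of the lattice.

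Finally I would evaluate $\sum_V 2^{b_0}b_0!\prod_j b_j!$. Each flat $V$ records a partition of $\{1,\ldots,n\}$ into a distinguished (possibly empty) zero block and $k$ unordered nonempty blocks, together with relative signs inside each nonzero block contributing a factor $\prod_{j=1}^k 2^{b_j-1}=2^{\,n-b_0-k}$; combined with the weight $2^{b_0}$ this pulls out a uniform factor $2^{n-k}$, leaving $\sum_{\text{partitions}}b_0!\prod_j b_j!$. This last sum I would compute by the exponential formula, the zero block contributing the exponential generating function $(1-x)^{-1}$ and each of the $k$ unordered nonzero blocks contributing $x/(1-x)$:
\begin{align*}
\sum_{\text{partitions of }\{1,\ldots,n\}}b_0!\,b_1!\cdots b_k!
=\frac{n!}{k!}\,[x^{n-k}](1-x)^{-(k+1)}
=\frac{n!}{k!}\binom{n}{k}.
\end{align*}
Combining the last three displays gives $2^{n-k}\binom{n}{k}\frac{n!}{k!}D^B(k,d-n+k)$, and the equality with the first (Stirling) expression is immediate from the definition of $D^B$. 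The reduction and the generating-function count are routine once the chamber formula is available, so the real obstacle is precisely the full-dimensional count $D^B(k,p)$ of chambers met by a generic subspace; a from-scratch proof of that would require either a Schläfli-type recursion on the number of hyperplanes or the characteristic-polynomial machinery for $\mathcal A(B_k)$, which is why I would prefer to cite it.
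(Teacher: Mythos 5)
Your argument is correct, but it takes a genuinely different route from the paper, whose own proof of this theorem consists of citing \cite[Theorem~2.1]{Kabluchko2019} and passing from the complementary count of non-intersected faces to the stated one via the identity $\stirlingb k0+\stirlingb k2+\ldots=\stirlingb k1+\stirlingb k3+\ldots=2^{k-1}k!$ and the total incidence count $2^nn!\binom nk$. You instead derive the face-level formula from the chamber-level special case: grouping the incidences by the $k$-dimensional flat $V=\lin F$, noting that $\mathcal A(B_n)|V\cong\mathcal A(B_k)$ and that general position of $L_d$ is inherited by $V\cap L_d$ (the same dimension count as in Lemma~\ref{Lemma_General_Position_Boundary_Faces}), so that each flat contributes $D^B(k,d-n+k)$ intersected chambers, each counted with multiplicity $2^{b_0}b_0!\,b_1!\cdots b_k!$ (the chamber-level analogue of Proposition~\ref{Prop_WeylKegel_die_Seiten_enthalten_B_n}); the weighted sum over flats is then evaluated by an exponential generating function, and $\frac{n!}{k!}[x^{n-k}](1-x)^{-(k+1)}=\frac{n!}{k!}\binom nk$ is indeed correct, as one can confirm by the consistency check at $d=n$, where the formula must return $2^nn!\binom nk$. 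What your approach buys is that the only remaining external input is the $B$-analogue of Schl\"afli's formula for chambers, i.e.\ $D^B(k,p)$, rather than the full face-count theorem of \cite{Kabluchko2019}; it also mirrors, inside $\R^n$, exactly the flat-by-flat decomposition the paper itself carries out in $\R^d$ to prove Theorem~\ref{Theorem_Number_WeylFaces_Tessellation_Bn}. What the paper's route buys is brevity, since the cited reference already contains the statement verbatim up to complementation.
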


In~\cite[Theorem 2.1]{Kabluchko2019}, the formula was stated for the complementary quantity counting the
faces that intersect the subspace $L_d$ only in a trivial way.
To derive the formula as stated above, one uses the identity
\begin{align*}
\stirlingb k1+\stirlingb k3+\ldots=\stirlingb k0+\stirlingb k2+\ldots=2^{k-1}k!
\end{align*}
(which follows from~\eqref{eq:def_stirling1b} by taking  $t=\pm 1$) together with the fact that the total  number
of Weyl chambers is $2^nn!$ and each chamber has $\binom{n}{k}$ faces of dimension $k$.
The analogous result in the $A_{n-1}$-case reads as follows.

\begin{satz}\label{Theorem_Weyl_Faces_Intersected_Type_An-1}
Let $L_d\in G(n,d)$ be a deterministic $d$-dimensional subspace of $\R^n$ in general position with respect to the reflection arrangement $\mathcal{A}(A_{n-1})$. Then
\begin{align*}
\sum_{\sigma\in\mathcal{S}_n}\:\sum_{F\in\F_k(C^A_{\sigma})}\1_{\{F\cap L_d\neq\{0\}\}}
&	=\frac{2n!}{k!}\binom{n-1}{k-1}\bigg(\stirling{k}{n-d+1}+\stirling{k}{n-d+3}+\ldots\bigg)\\
&	=\frac{n!}{k!}\binom{n-1}{k-1}D^A(k,n-d+k),
\end{align*}
where the $\stirling{n}{k}$'s are the Stirling numbers of the first kind as defined in~\eqref{eq:def_stirling1}.
\end{satz}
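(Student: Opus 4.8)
The plan is to deduce the statement directly from \cite[Theorem~2.8]{Kabluchko2019}, mirroring the way the formula in Theorem~\ref{Theorem_Weyl_Faces_Intersected} was obtained from \cite[Theorem~2.1]{Kabluchko2019}. In \cite{Kabluchko2019} it is the \emph{complementary} quantity that is computed, namely the number
\[
\sum_{\sigma\in\mathcal{S}_n}\sum_{F\in\F_k(C^A_\sigma)}\1_{\{F\cap L_d=\{0\}\}}
\]
of $k$-faces, counted over all chambers, that meet $L_d$ only in the origin. So the first step is simply to recall this cited formula and then to pass to the complement.

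For the passage to the complement I would use two elementary inputs. First, the reflection arrangement $\mathcal A(A_{n-1})$ has exactly $n!$ chambers $C^A_\sigma$, and by the description \eqref{Eq_Rep_Weyl_Chamber_Faces_An-1} each of them has precisely $\binom{n-1}{k-1}$ faces of dimension $k$; therefore
\[
\sum_{\sigma\in\mathcal{S}_n}\sum_{F\in\F_k(C^A_\sigma)}\1_{\{F\cap L_d\neq\{0\}\}}
=n!\binom{n-1}{k-1}-\sum_{\sigma\in\mathcal{S}_n}\sum_{F\in\F_k(C^A_\sigma)}\1_{\{F\cap L_d=\{0\}\}}.
\]
Second, evaluating \eqref{eq:def_stirling1} with $n$ replaced by $k$ at $t=1$ and at $t=-1$ gives, for $k\ge 2$,
\[
\stirling{k}{1}+\stirling{k}{3}+\ldots=\stirling{k}{2}+\stirling{k}{4}+\ldots=\frac{k!}{2},
\]
because $t(t+1)\cdots(t+k-1)$ equals $k!$ at $t=1$ and vanishes at $t=-1$. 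Writing $n!\binom{n-1}{k-1}=\frac{2n!}{k!}\binom{n-1}{k-1}\cdot\frac{k!}{2}$ and expressing $\tfrac{k!}{2}$ through this identity, I would subtract the cited Stirling-number expression for the trivial count; the terms outside the relevant parity range cancel and leave exactly
\[
\frac{2n!}{k!}\binom{n-1}{k-1}\Big(\stirling{k}{n-d+1}+\stirling{k}{n-d+3}+\ldots\Big),
\]
which is the first asserted expression. The second expression then follows by reading the bracketed one-parity sum off the definition of $D^A$ in Theorem~\ref{theorem:number_cones_A}: choosing the arguments so that the lowest index equals $n-d+1$ identifies the bracket with $\tfrac12 D^A(k,d-n+k)$.

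Since the combinatorial heavy lifting is already done in \cite[Theorem~2.8]{Kabluchko2019}, there is no serious obstacle here; the only delicate point is the bookkeeping of the Stirling sums. Concretely, I expect the main thing to verify is that, after subtracting the trivial count from $k!/2$, the surviving terms are precisely those $\stirling{k}{j}$ with $j$ in the parity class of $n-d+1$ and in the admissible range, and that the index shift hidden in the second argument of $D^A(k,\,\cdot\,)$ is taken with the same orientation as the $D^B(k,d-n+k)$ appearing in Theorem~\ref{Theorem_Weyl_Faces_Intersected}. The terms that this shift would add at non-positive indices vanish automatically, since $\stirling{k}{j}=0$ for $j\le 0$, and this is exactly what makes the two displayed forms agree. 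The degenerate cases $k=1$ and $n-d+1>k$ produce $0$ on both sides and are checked directly.
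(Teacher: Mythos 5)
Your proposal is correct and follows essentially the same route as the paper: the paper likewise obtains this theorem by citing \cite[Theorem~2.8]{Kabluchko2019} for the complementary count and passing to the complement via the total count $n!\binom{n-1}{k-1}$ and the parity identity $\stirling{k}{1}+\stirling{k}{3}+\ldots=\stirling{k}{2}+\stirling{k}{4}+\ldots=\tfrac{k!}{2}$ obtained from \eqref{eq:def_stirling1} at $t=\pm1$ (the paper spells this out only for the $B_n$-analogue and declares the $A_{n-1}$-case analogous). One remark: your identification of the bracket with $\tfrac12 D^A(k,d-n+k)$ is the correct reading; the second argument $n-d+k$ printed in the theorem statement is a typo, as confirmed by the application in the proof of Proposition~\ref{prop:faces_with_multiplicity_A}, where substituting $n-d+k$ for $k$ must yield $D^A(n-d+k,k)$.
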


\begin{proof}[Proof of Proposition~\ref{prop:faces_with_multiplicity_B}]
Let $1\le k\le d$ and let $y_1,\ldots,y_n$ satisfy one of the equivalent general position assumptions \ref{label_GP1} or \ref{label_GP2}. We want to evaluate the number of $k$-faces of $F\in\F^B_k(y_1,\ldots,y_n)$, each face counted with the multiplicity equal to the number of $d$-dimensional Weyl cones $C\in\mathcal W^B(y_1,\ldots,y_n)$ containing it. We define $\Omega_n(l):=\{\eps\in\{\pm 1\}^n: \eps_{l_{n-d+k}+1}=\ldots=\eps_n=1\}$ and use Proposition~\ref{Prop_Char_WeylFaces} and Proposition~\ref{Prop_WeylKegel_die_Seiten_enthalten_B_n} to obtain
\begin{align*}
&\sum_{F\in\F^B_k(y_1,\ldots,y_n)}\:\sum_{C\in\mathcal W^B(y_1,\ldots,y_n)}\1_{\{F\subseteq C\}}\\
&	\quad=\sum_{1\le l_1<\ldots<l_{n-d+k}\le n}l_1!(l_2-l_1)!\ldots (n-l_{n-d+k})!2^{n-l_{n-d+k}}\sum_{\eps\in\Omega_n(l)}\sum_{\sigma\in\mathcal{S}_n}\1_{\{F^B_{\varepsilon,\sigma}(l_1,\ldots,l_{n-d+k})\neq\{0\}\}}\\
&	\quad=\sum_{1\le l_1<\ldots<l_{n-d+k}\le n}l_1!(l_2-l_1)!\ldots(n-l_{n-d+k})!2^{n-l_{n-d+k}}\sum_{\eps\in\Omega_n(l)}\sum_{\sigma\in\mathcal{S}_n}\1_{\{C^B_{\eps,\sigma}(l_1,\ldots,l_{n-d+k})\cap L^\perp\neq\{0\}\}}.
\end{align*}
Note that we applied the equivalence (\ref{Eq_Lemma_Equivalence_Weylfaces}) from Lemma~\ref{Lem_Connection_Face_WeylFace} in the last equation. Now, we use that each face $C_{\eps,\sigma}^B(l_1,\ldots,l_{n-d+k})$ is contained in exactly $l_1!(l_2-l_1)!\ldots(n-l_{n-d+k})!2^{n-l_{n-d+k}}$ Weyl chambers $C_{\eps,\sigma}^B$ of type $B_n$. For this standard fact, we refer to \cite[Proof of Theorem 2.1]{Kabluchko2019}. Furthermore $L^\perp$ is a $d$-dimensional subspace and in general position with respect to the reflection arrangement $\A(B_n)$, due to \ref{label_GP2}. Thus, we can apply Theorem~\ref{Theorem_Weyl_Faces_Intersected} replacing  $k$ by $n-d+k$ and get
\begin{align*}
\sum_{F\in\F^B_k(y_1,\ldots,y_n)}\:\sum_{C\in\mathcal W^B(y_1,\ldots,y_n)}\1_{\{F\subseteq C\}}
&	=\sum_{\eps\in\{\pm 1\}^n}\:\sum_{\sigma\in\mathcal{S}_n}\:\sum_{F\in \F_{n-d+k}(C^B_{\eps,\sigma})}\1_{\{F\cap L^\perp\neq\{0\}\}}\\
&	=2^{d-k}\binom{n}{d-k}\frac{n!}{(n-d+k)!}D^B(n-d+k,k),
\end{align*}
which completes the proof.
\end{proof}




\begin{proof}[Proof of Proposition~\ref{prop:faces_with_multiplicity_A}]
The proof is similar to that of Proposition~\ref{prop:faces_with_multiplicity_B}. Using Proposition~\ref{Prop_Char_Weyl_Faces_An-1} and Lemma~\ref{Lem_Connection_Faces_Chamber_An-1}, we get
\begin{align*}
&\sum_{F\in\F^A_k(y_1,\ldots,y_n)}\:\sum_{C\in\mathcal W^A(y_1,\ldots,y_n)}\1_{\{F\subseteq C\}}\\
&	\quad=\sum_{1\le l_1<\ldots<l_{n-d+k-1}\le n-1}l_1!(l_2-l_1)!\ldots (n-l_{n-d+k-1})!\sum_{\sigma\in\mathcal{S}_n}\1_{\{F^A_{\sigma}(l_1,\ldots,l_{n-d+k-1})\neq\{0\}\}}\\
&	\quad=\sum_{1\le l_1<\ldots<l_{n-d+k-1}\le n-1}l_1!(l_2-l_1)!\ldots(n-l_{n-d+k-1})!\sum_{\sigma\in\mathcal{S}_n}\1_{\{C^A_{\sigma}(l_1,\ldots,l_{n-d+k-1})\cap L^\perp\neq\{0\}\}}\\
&	\quad=\sum_{\sigma\in\mathcal{S}_n}\:\sum_{F\in \F_{n-d+k}(C^B_{\eps,\sigma})}\1_{\{F\cap L^\perp\neq\{0\}\}}\\
&	\quad=\binom{n-1}{d-k}\frac{n!}{(n-d+k)!}D^A(n-d+k,k),
\end{align*}
which completes the proof. Here, we used that each face $C_\sigma^A(l_1,\ldots,l_{n-d+k-1})$ is contained in exactly $l_1!(l_2-l_1)!\ldots(n-l_{n-d+k-1})!$ Weyl chambers $C_\sigma^A$; see, e.g.,  \cite[Proof of Theorem 2.8]{Kabluchko2019}. Since \ref{label_GP2_A} is satisfied, we were able to apply Theorem~\ref{Theorem_Weyl_Faces_Intersected_Type_An-1} in the last step.
\end{proof}

\section{Number of faces in Weyl tessellations: Proof of Theorems~\ref{Theorem_Number_WeylFaces_Tessellation_An-1} and~\ref{Theorem_Number_WeylFaces_Tessellation_Bn}}\label{sec:proof_weyl_tess}

This section contains the proofs of the formulas for the total number of $k$-faces in Weyl tessellations of both types $B_n$ and $A_{n-1}$.
For $k\in\{1,\ldots,d\}$, we want to prove that
\begin{align*}
\#\cF_k^B(y_1,\ldots,y_n)=\stirlingsecb n{n-d+k}D^B(n-d+k,k)
\end{align*}
and
\begin{align*}
\#\F_k^A(y_1,\ldots,y_n)=\stirlingsec{n}{n-d+k}D^A(n-d+k,k).
\end{align*}

\begin{proof}[Proof of Theorem~\ref{Theorem_Number_WeylFaces_Tessellation_Bn}]
Due to Proposition~\ref{Prop_Char_WeylFaces}(i), each $k$-face $F\in\F_k^B(y_1,\ldots,y_n)$ is contained in a $k$-dimensional  linear  subspace of the form
\begin{align*}
L(l,\eps,\sigma):=\{v\in\R^d&:\eps_1f_{\sigma(1)}=\ldots=\eps_{l_1}f_{\sigma(l_1)},\ldots,\\
&	\;\;\;\eps_{l_{n-d+k-1}+1}f_{\sigma(l_{n-d+k-1}+1)}=\ldots=\eps_{l_{n-d+k}}f_{\sigma(l_{n-d+k})},\\
&	\;\;\; f_{\sigma(l_{n-d+k}+1)}=\ldots=f_{\sigma(n)}=0\}
\end{align*}
for some $1\le l_1<\ldots<l_{n-d+k}\le n$, $l= (l_1,\ldots,l_{n-d+k})$,  $\eps\in\{\pm 1\}^n$ and $\sigma\in\mathcal{S}_n$. At first, we want to evaluate the number of distinct subspaces of this form. For each fixed size $r\in\{0,\ldots,d-k\}$ of the last group of equations there are $\binom{n}{r}$ possibilities to choose its elements among $\{f_1,\ldots,f_n\}$. Then, we are left with a set of $n-r$ elements, which we want to partition in $n-d+k$ non-empty subsets. There are $\stirlingsec{n-r}{n-d+k}$ possibilities to choose the partition. Furthermore, we can choose the signs of the $f_i$'s in the first $n-d+k$ groups arbitrarily, for which there are $2^{n-r}$ possibilities. But since we obtain the same subspace if we multiply any group of equations by $-1$, we have to divide the $2^{n-r}$ possibilities by $2^{n-d+k}$. This yields a total of
\begin{align*}
\sum_{r=0}^{d-k}\binom{n}{r}\stirlingsec{n-r}{n-d+k}\frac{2^{n-r}}{2^{n-d+k}}=\sum_{j=n-d+k}^{n}\binom{n}{j}\stirlingsec{j}{n-d+k}2^{j-(n-d+k)}=\stirlingsecb n{n-d+k}
\end{align*}
possible subspaces of the form $L(l,\eps,\sigma)$. The last equation follows from the definition of $\stirlingsecb nk$ in~\eqref{eq:def_stirling2b}. All these subspaces are pairwise distinct, which can be shown in the same way as in Example~\ref{Example_WeylCones_containing_face} and relies on the general position assumption~\ref{label_GP1}.

Now, we want to show that the $k$-faces of $\mathcal{W}^B(y_1,\ldots,y_n)$ contained in $L(l,\eps,\sigma)$ form a Weyl tessellation in $L(l,\eps,\sigma)$ and that the number of these $k$-faces is $D^B(n-d+k,k)$, independently of the choices of $l,\eps$ and $\sigma$.  To simplify the notation, we consider the special case $\eps_i=1$ and $\sigma(i)=i$, for all $i=1,\ldots,n$, and define
\begin{align*}
L:=\{v\in\R^d: f_1=\ldots=f_{l_1},\ldots,f_{l_{n-d+k-1}+1}=\ldots=f_{l_{n-d+k}},f_{l_{n-d+k}+1}=\ldots=f_n=0\},
\end{align*}
where  $f_i(v)= \langle v, y_i\rangle$.
Our goal is to show that the orthogonal projections $\Pi_L(y_{l_1}),\ldots,\Pi_L(y_{l_{n-d+k}})$ on $L$ induce a Weyl tessellation (of type $B_{n-d+k}$) in $L$ and that its $k$-dimensional cones are in one-to-one correspondence with the $k$-faces of $\mathcal W^B(y_1,\dots,y_n)$ contained in $L$. Postponing the verification of~\ref{label_GP1} for these projections to the end of the proof, Theorem~\ref{Theorem_Number_Weyl_Cones} implies that the Weyl tessellation in $L$ generated by $\Pi_L(y_{l_1}),\ldots,\Pi_L(y_{l_{n-d+k}})$ consists of $D^B(n-d+k,k)$ cones. These are the cones  different from $\{0\}$ of the form
\begin{align*}
&	\{v\in L:\delta_1\langle v,\Pi_L(y_{l_{\pi(1)}})\rangle\le\ldots\le\delta_{n-d+k}\langle v,\Pi_L(y_{l_{\pi(n-d+k)}})\rangle\le 0\}\\
&	\quad =\{v\in L:\delta_1f_{l_{\pi(1)}}\le \ldots\le\delta_{n-d+k}f_{l_{\pi(n-d+k)}}\le 0\}\\
&	\quad =\{v\in\R^d: \delta_1f_{l_{\pi(1)-1}+1}=\ldots=\delta_1f_{l_{\pi(1)}}\le \delta_2f_{l_{\pi(2)-1}+1}=\ldots=\delta_2f_{l_{\pi(2)}}\\
&	\quad\quad\quad\quad\quad\quad\;\le \ldots\le \delta_{n-d+k}f_{l_{\pi(n-d+k)-1}+1}=\ldots=\delta_{n-d+k}f_{l_{\pi(n-d+k)}}\le f_{l_{n-d+k}+1}=\ldots=f_n= 0\},
\end{align*}
where $\delta\in\{\pm 1\}^{n-d+k}$, $\pi\in\mathcal S_{n-d+k}$. Note that we used $\langle v,\Pi_L(y_{l_{\pi(i)}})\rangle=\langle v,y_{l_{\pi(i)}}\rangle$, for all $v\in L$ and $i=1,\ldots,n-d+k$, in the first equality. The second equality follows from the definition of $L$.   The last representation basically says that we keep the groups of equations from $L$, permute them  (except for the last one) according to $\pi$ and change the signs in the  groups  (except for the last one) according to $\delta$. Since we are interested only in cones different from $\{0\}$, the above representations define $k$-faces of $\mathcal{W}^B(y_1,\ldots,y_n)$ due to Proposition~\ref{Prop_Char_WeylFaces}(ii). Since the cones different from $\{0\}$ cover  $L$, these are already all of the $k$-faces contained in $L$.

In summary, we know that every $k$-face of $\mathcal{W}^B(y_1,\ldots,y_n)$ is contained in a  unique subspace of the form $L(l,\eps,\sigma)$ and every such subspace contains $D^B(n-d+k,k)$ faces of dimension $k$. This yields a total of
$
\stirlingsecb n{n-d+k} D^B(n-d+k,k)
$
$k$-faces of $\mathcal{W}^B(y_1,\ldots,y_n)$.

It remains to prove that $\Pi_L(y_{l_1}),\ldots,\Pi_L(y_{l_{n-d+k}})$ satisfy the general position assumption~\ref{label_GP1} in $L$, that is, the vectors
\begin{align*}
\delta_1 \Pi_L(y_{l_{\pi(1)}})- \delta_2\Pi_L(y_{l_{\pi(2)}}),\dots ,\delta_{n-d+k-1} \Pi_L(y_{l_{\pi(n-d+k-1)}})- \delta_{n-d+k}\Pi_L(y_{l_{\pi(n-d+k)}}),  \Pi_L(y_{l_{\pi(n-d+k)}})
\end{align*}
are in general position, for each $\delta\in\{\pm 1\}^{n-d+k}$ and $\pi\in\mathcal S_{n-d+k}$. So, let arbitrary $\delta\in\{\pm 1\}^{n-d+k}$ and $\pi\in\mathcal S_{n-d+k}$ be given. Define the hyperplanes arrangements
\begin{align*}
\cA_1\hspace*{-2pt}=\hspace*{-2pt}\Big\{
&(\delta_1y_{l_{\pi(1)-1}+1}-\delta_1y_{l_{\pi(1)-1}+2})^\perp,\dots,(\delta_1y_{l_{\pi(1)-1}}-\delta_1y_{l_{\pi(1)}})^\perp,\\
&(\delta_2y_{l_{\pi(2)-1}+1}-\delta_2y_{l_{\pi(2)-1}+2})^\perp,\dots,(\delta_2y_{l_{\pi(2)-1}}-\delta_2y_{l_{\pi(2)}})^\perp,\\
&\qquad\qquad\qquad\qquad\qquad\;\:\dots\qquad\qquad\qquad\qquad\qquad\qquad,\\
&(\delta_{n-d+k}y_{l_{\pi({n-d+k})-1}+1}-\delta_{n-d+k}y_{l_{\pi({n-d+k})-1}+2})^\perp,\dots,(\delta_{n-d+k}y_{l_{\pi({n-d+k})-1}}-\delta_{n-d+k}y_{l_{\pi({n-d+k})}})^\perp\hspace*{-1pt},\\
&(y_{l_{n-d+k}+1}-y_{l_{n-d+k}+1})^\perp,\dots,(y_{n-1}-y_{n})^\perp,y_n^\perp\Big\}
\end{align*}
and
\begin{multline*}
\cA_2=\Big\{(\delta_1y_{l_{\pi(1)}}-\delta_2y_{l_{\pi(1)}+1})^\perp,\dots,(\delta_{n-d+k-1}y_{l_{\pi(n-d+k-1)}}-\delta_{n-d+k}y_{l_{\pi(n-d+k-1)}+1})^\perp,\\(\delta_{n-d+k}y_{l_{\pi(n-d+k)}}-y_{l_{n-d+k}+1})^\perp\Big\}.
\end{multline*}
Assumption~\ref{label_GP1} implies that the hyperplanes in $\cA_1\cup\cA_2$ are in general position. Since $L$ is just the intersection of the hyperplanes from $\cA_1$, we also know that $L$ is in general position to $\cA_2$. Consequently,  the hyperplanes in $L$ of the induced arrangement
\begin{multline*}
\cA_2|L =\{H\cap L:H\in\cA_2\}\\
=\Big\{L\cap (\delta_1y_{l_{\pi(1)}}-\delta_2y_{l_{\pi(2)}})^\perp,\dots,L\cap(\delta_{n-d+k-1}y_{l_{\pi(n-d+k-1)}}-\delta_{n-d+k}y_{l_{\pi(n-d+k)}})^\perp,
L\cap y_{l_{\pi(n-d+k)}}^\perp\Big\}
\end{multline*}
are in general position in $L$. Since $\delta$ and $\pi$ were chosen arbitrarily, this implies that the  projected vectors $\Pi_L(y_{l_1}),\dots,\Pi_L(y_{l_{n-d+k}})$ satisfy~\ref{label_GP1} in the ambient subspace $L$.
\end{proof}

\begin{proof}[Proof of Theorem~\ref{Theorem_Number_WeylFaces_Tessellation_An-1}]
Due to Proposition~\ref{Prop_Char_Weyl_Faces_An-1}(i), each $k$-face of $F\in\F_k^A(y_1,\ldots,y_n)$ is contained in a subspace of the form
\begin{align*}
L(l,\sigma):=\{v\in\R^d:f_{\sigma(1)}=\ldots=f_{\sigma(l_1)},\ldots,f_{\sigma(l_{n-d+k-1}+1)}=\ldots=f_{\sigma(n)}\}
\end{align*}
for suitable $1\le l_1<\ldots<l_{n-d+k-1}\le n-1$ and $\sigma\in\mathcal{S}_n$. There are a total of $\stirlingsec{n}{n-d+k}$ distinct subspaces of the given form, since these are in one-to-one correspondence with partitions of the set $\{1,\ldots,n\}$ into $n-d+k$ non-empty sets.

Now, it is left to prove that every subspace $L(l,\sigma)$ contains exactly $D^A(n-d+k,k)$ $k$-faces of $\mathcal{W}^A(y_1,\ldots,y_n)$. Again, consider only the case $L:=L(l,\sigma)$ for $\sigma(i)=i$, $i=1,\ldots,n$. For this, we need to show that $\Pi_L(y_{l_1}),\ldots,\Pi_L(y_{l_{n-d+k-1}}),\Pi_L(y_n)$ satisfy the general position assumption \ref{label_GP1_A} in $L$, which is shown in the same way as in Theorem~\ref{Theorem_Number_WeylFaces_Tessellation_Bn}. This completes the proof.
\end{proof}

\section{Expected size functionals of Weyl random cones: Proof of Theorems~\ref{Theorem_sizefunction_Dn_An-1} and~\ref{Theorem_sizefunction_Dn}}\label{sec:proofs_size_funct}

This section is dedicated to proving the formulas for the expected size functionals of the Weyl random cones $\Dna$ and $\Dnb$ stated in Section~\ref{sec:weyl_random_cones}.

\subsection{Characterizing  faces induced in a linear subspace}

In order to prove Theorems~\ref{Theorem_sizefunction_Dn_An-1} and~\ref{Theorem_sizefunction_Dn}, we need to state a result on the faces of the Weyl tessellations induced in a linear subspace. For this, we introduce the following notation in the $B_n$-case. Let $U\subseteq\R^d$ be a $k$-dimensional linear subspace in $\R^d$ which is in general position with respect to  $\mathcal{A}^B(y_1,\ldots,y_n)$. Recall that the hyperplane arrangement induced by $\A^B(y_1,\ldots,y_n)$ in $U$ is defined as $\A^B|_U(y_1,\ldots,y_n):=\{H\cap U:H\in \A^B(y_1,\ldots,y_n)\}$. The induced arrangement $\A^B|_U(y_1,\ldots,y_n)$ consists explicitly of the following hyperplanes in $U$:
\begin{align*}
 \big(\Pi_U(y_i)+\Pi_U(y_j)\big)^\perp \cap U,\quad &1 \le i<j\le n,\\
\big(\Pi_U(y_i)-\Pi_U(y_j)\big)^\perp\cap U,\quad &1 \le i<j\le n,\\
\Pi_U(y_i)^\perp\cap U,\quad &1\le i\le n.
\end{align*}
By definition, the induced Weyl tessellation in $U$, which we will denote by $\mathcal{W}^B|_U(y_1,\ldots,y_n)$, consists of  the cones of the conical tessellation  in $U$ generated by the hyperplane arrangement $\A^B|_U(y_1,\ldots,y_n)$. We denote the set of $j$-faces of $\mathcal{W}^B|_U(y_1,\ldots,y_n)$ by $\F^B_j|_U(y_1,\ldots,y_n)$. To state an explicit representation of these faces,  we define the cones
$$
F^B_{\eps,\,\sigma}|_U(l_1,\ldots,l_{n-k+j})
:=
F_{\eps,\,\sigma}^B(l_1,\ldots,l_{n-k+j})\cap U,
$$
where  $\eps\in\{\pm 1\}^n$, $\sigma\in\mathcal{S}_n$, $1\le j\le k\le d$, and $1\leq l_1<\ldots <l_{n-k+j}\leq n$.
Let $f_i'$ be the linear functionals on $U$ given by $f'_i=f'_i(v):=\langle v,\Pi_U(y_i)\rangle$ for $i=1,\ldots,n$. Since
\begin{align*}
f_i(v)=\langle v,y_i\rangle=\langle v,\Pi_U(y_i)\rangle+\langle v, y_i-\Pi_U(y_i)\rangle=\langle v,\Pi_U(y_i)\rangle=f'_i(v)
\end{align*}
holds for all $v\in U$, we have the explicit representation
\begin{align}
F^B_{\eps,\,\sigma}|_U(l_1,\ldots,l_{n-k+j})
=
\big\{v\in U&:\varepsilon_1f'_{\sigma(1)}=\ldots=\varepsilon_{l_1}f'_{\sigma(l_1)}\le \varepsilon_{l_1+1}f'_{\sigma(l_1+1)}=\ldots =\varepsilon_{l_2}f'_{\sigma(l_2)}\notag\\
&	\;\;\le\ldots\le \varepsilon_{l_{n-k+j-1}+1}f'_{\sigma(l_{n-k+j-1}+1)}= \ldots=\varepsilon_{l_{n-k+j}}f'_{\sigma(l_{n-k+j})} \label{eq:F_B_U_rep}\\
&	\;\;\le f'_{\sigma(l_{n-k+j}+1)}=\ldots=f'_{\sigma(n)}=0\big\}. \notag
\end{align}
We will see below that if not $\{0\}$, the cones  $F^B_{\eps,\,\sigma}|_U(l_1,\ldots,l_{n-k+j})$ are the $j$-faces of the induced Weyl tessellation $\mathcal{W}^B|_U(y_1,\ldots,y_n)$.

\begin{lem}\label{Lemma_Faces_Induced_in_L}
Let $1\le j\le k\le d$ and let $y_1,\ldots,y_n\in\R^d$ satisfy the general position assumption \ref{label_GP1}. Furthermore, let $U\in G(d,k)$ be in general position with respect to the hyperplane arrangement $\mathcal{A}^B(y_1,\ldots,y_n)$. Then the following hold:
\begin{enumerate}[label=(\roman*)]
\item For every $j$-face $F_j\in \F_j^B|_U(y_1,\ldots,y_n)$ of the tessellation $\mathcal{W}^B|_U(y_1,\ldots,y_n)$ there is a unique $(d-k+j)$-face $F\in\F^B_{d-k+j}(y_1,\ldots,y_n)$ containing $F_j$ and satisfying $F_j=F\cap U$.
\item If $F\in\F^B_{d-k+j}(y_1,\ldots,y_n)$ and $F\cap U\neq \{0\}$, then $F\cap U\in\F_j^B|_U(y_1,\ldots,y_n)$.
\end{enumerate}
\end{lem}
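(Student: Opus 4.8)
The plan is to identify the induced Weyl tessellation in $U$ with a genuine Weyl tessellation of type $B_n$ living in the $k$-dimensional ambient space $U$, and then to read off both assertions from the characterization of faces already established for such tessellations in Proposition~\ref{Prop_Char_WeylFaces} and Proposition~\ref{Prop uniq}.

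First I would record the arrangement identity $\mathcal{A}^B|_U(y_1,\ldots,y_n)=\mathcal{A}^B(\Pi_U(y_1),\ldots,\Pi_U(y_n))$ inside $U$. Indeed, for $v\in U$ one has $\langle v,y_i\rangle=\langle v,\Pi_U(y_i)\rangle$, so $(y_i\pm y_j)^\perp\cap U=(\Pi_U(y_i)\pm\Pi_U(y_j))^\perp$ and $y_i^\perp\cap U=\Pi_U(y_i)^\perp$, the orthogonal complements being taken in $U$. Hence $\mathcal{W}^B|_U(y_1,\ldots,y_n)$ coincides with the Weyl tessellation $\mathcal{W}^B(\Pi_U(y_1),\ldots,\Pi_U(y_n))$ in $U\cong\R^k$, and by the representation~\eqref{eq:F_B_U_rep} the cone $F^B_{\eps,\sigma}|_U(l_1,\ldots,l_{n-k+j})$ is precisely the face $F^B_{\eps,\sigma}(l_1,\ldots,l_{n-k+j})$ computed for the projected vectors in $U$.

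The key step, which I expect to be the main technical obstacle, is to verify that $\Pi_U(y_1),\ldots,\Pi_U(y_n)$ themselves satisfy assumption~\ref{label_GP1} in $U$. Fix $\eps\in\{\pm1\}^n$ and $\sigma\in\mathcal{S}_n$, and let $w_1,\ldots,w_n$ denote the corresponding chain vectors $\eps_1y_{\sigma(1)}-\eps_2y_{\sigma(2)},\ldots,\eps_{n-1}y_{\sigma(n-1)}-\eps_ny_{\sigma(n)},\eps_ny_{\sigma(n)}$; each hyperplane $w_i^\perp$ belongs to $\mathcal{A}^B(y_1,\ldots,y_n)$. For any $m\le k$ indices $i_1,\ldots,i_m$, assumption~\ref{label_GP1} gives $\dim(w_{i_1}^\perp\cap\ldots\cap w_{i_m}^\perp)=d-m$, while the general position of $U$ with respect to $\mathcal{A}^B$ (general position notion~(iv), applied to the subset $\{w_{i_1}^\perp,\ldots,w_{i_m}^\perp\}$) forces $\dim\bigl(U\cap w_{i_1}^\perp\cap\ldots\cap w_{i_m}^\perp\bigr)=\max\{0,k-m\}=k-m$. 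Taking $m=k$ shows that the $k$ induced normals $\Pi_U(w_{i_1}),\ldots,\Pi_U(w_{i_k})$ are linearly independent in $U$; since $n\ge d\ge k$, this is exactly~\ref{label_GP1} for the projected vectors. This transfer of general position parallels the computation at the end of the proof of Theorem~\ref{Theorem_Number_WeylFaces_Tessellation_Bn}.

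With this in hand both claims follow from Proposition~\ref{Prop_Char_WeylFaces}, read in the ambient space $U\cong\R^k$, where a $j$-face carries $n-k+j$ indices. For (ii), writing a given $(d-k+j)$-face as $F=F^B_{\eps,\sigma}(l_1,\ldots,l_{n-k+j})$ by Proposition~\ref{Prop_Char_WeylFaces}(i) and observing $F\cap U=F^B_{\eps,\sigma}|_U(l_1,\ldots,l_{n-k+j})\neq\{0\}$, part~(ii) of that proposition identifies $F\cap U$ as a $j$-face of $\mathcal{W}^B|_U(y_1,\ldots,y_n)$. For (i), Proposition~\ref{Prop_Char_WeylFaces}(i) in $U$ represents a given $j$-face as $F_j=F^B_{\eps,\sigma}|_U(l_1,\ldots,l_{n-k+j})$; setting $F:=F^B_{\eps,\sigma}(l_1,\ldots,l_{n-k+j})$ gives a $(d-k+j)$-face with $F_j=F\cap U\subseteq F$. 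For the uniqueness of $F$ I would invoke Proposition~\ref{Prop uniq} in $U$: any two representations producing the same nonzero $F_j$ agree in the sense of that proposition, and exactly those combinatorial data determine the cone $F^B_{\eps,\sigma}(l_1,\ldots,l_{n-k+j})$ in $\R^d$, so the enclosing $(d-k+j)$-face is unambiguous.
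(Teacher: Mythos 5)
Your proposal is correct and follows essentially the same route as the paper's proof: first transfer assumption~\ref{label_GP1} to the projections $\Pi_U(y_1),\ldots,\Pi_U(y_n)$ using the general position of $U$ with respect to $\mathcal{A}^B(y_1,\ldots,y_n)$ and the identity $U\cap z^\perp=(\Pi_U(z))^\perp\cap U$, then read off (i) and (ii) from Proposition~\ref{Prop_Char_WeylFaces} applied in the ambient space $U$, with uniqueness in (i) supplied by Proposition~\ref{Prop uniq} for the projected vectors. The only cosmetic difference is that you verify the transferred general position by counting dimensions of intersections of up to $k$ hyperplanes, whereas the paper phrases the same fact as preservation of general position of hyperplanes under intersection with $U$.
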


\begin{proof} 
At first, we show that the projections $\Pi_U(y_1),\ldots,\Pi_U(y_n)$ satisfy the general position assumption \ref{label_GP1}.
Take some $\eps\in\{\pm 1\}^n$ and $\sigma\in\mathcal{S}_n$. Condition (B1) implies that
$$
(\eps_1y_{\sigma(1)}-\eps_2y_{\sigma(2)})^\perp,\ldots, (\eps_{n-1}y_{\sigma(n-1)}-\eps_ny_{\sigma(n)})^\perp,(\eps_ny_{\sigma(n)})^\perp
$$
are in general position.
Since $U$ is in general position to the arrangement $\mathcal{A}^B(y_1,\ldots,y_n)$, which contains these hyperplanes, the following hyperplanes in $U$
\begin{align*}
U\cap(\eps_1y_{\sigma(1)}-\eps_2y_{\sigma(2)})^\perp,\ldots,U\cap(\eps_{n-1}y_{\sigma(n-1)}-\eps_ny_{\sigma(n)})^\perp,U\cap(\eps_ny_{\sigma(n)})^\perp
\end{align*}
are also in general position in $U$. Since $U\cap (z^\perp)=(\Pi_U(z))^\perp\cap U$ for every $z\in\R^d$, it follows that
\begin{align*}
\big(\eps_1\Pi_U(y_{\sigma(1)})-\eps_2\Pi_U(y_{\sigma(2)})\big)^\perp\hspace*{-1.6pt}\cap U, \ldots,\big(\eps_{n-1}\Pi_U(y_{\sigma(n-1)})-\eps_n\Pi_U(y_{\sigma(n)})\big)^\perp\hspace*{-1.6pt}\cap U, \big(\eps_n\Pi_U(y_{\sigma(n)})\big)^\perp\hspace*{-1.6pt}\cap U
\end{align*}
are in general position in $U$. 
Equivalently, the vectors
\begin{align*}
\eps_1\Pi_U(y_{\sigma(1)})-\eps_2\Pi_U(y_{\sigma(2)}), \ldots,\eps_{n-1}\Pi_U(y_{\sigma(n-1)})-\eps_n\Pi_U(y_{\sigma(n)}),\eps_n\Pi_U(y_{\sigma(n)})
\end{align*}
are in general position in $U$. This means that, under the given assumptions, \ref{label_GP1} is satisfied for $\Pi_U(y_1),\ldots,\Pi_U(y_n)$.

Now, we prove part (i). Let $F_j\in\F_j^B|_U(y_1,\ldots,y_n)$. We can apply Proposition~\ref{Prop_Char_WeylFaces}(i) in the ambient linear subspace $U$ to the projections $\Pi_U(y_1),\ldots,\Pi_U(y_n)$. It follows from this proposition and the representation~\eqref{eq:F_B_U_rep} that there are $1\le l_1<\ldots\le l_{n-k+j}\le n$ and $\eps\in\{\pm 1\}^n$, $\sigma\in\mathcal{S}_n$, such that
\begin{align*}
F_j=F_{\eps,\sigma}^B|_U(l_1,\ldots,l_{n-k+j}) = F_{\eps,\sigma}^B (l_1,\ldots,l_{n-k+j}) \cap U.
\end{align*}
Now, we define $F:=F^B_{\eps,\sigma}(l_1,\ldots,l_{n-k+j})$.  Note that $F\neq\{0\}$ because $F_j\neq \{0\}$.  Since \ref{label_GP1} is satisfied for $y_1,\ldots,y_n$, Proposition~\ref{Prop_Char_WeylFaces}(ii) yields that $F\in\F^B_{d-k+j}(y_1,\ldots,y_n)$. It follows from the construction that $F\cap U=F_j$.

The uniqueness of $F\in\F^B_{d-k+j}(y_1,\ldots,y_n)$ such that  $F\cap U = F_j$ follows from our general position assumptions or rather from the fact that the projections $\Pi_U(y_1),\ldots,\Pi_U(y_n)$ satisfy the assumption \ref{label_GP1} in $U$. We will sketch the idea of the proof. Suppose there is another face $G\in\F^B_{d-k+j}(y_1,\ldots,y_n)$ with $G\cap U =  F_j$. By  Proposition~\ref{Prop_Char_WeylFaces}(i) this means that there are $1\le i_1<\ldots<i_{n-k+j}\le n$ and $\delta\in\{\pm 1\}^n$, $\pi\in\mathcal{S}_n$, such that $G=F^B_{\delta,\,\pi}(i_1,\ldots,i_{n-k+j})$. It follows that
\begin{align*}
F_{\delta,\,\pi}^B(i_1,\ldots,i_{n-k+j})\cap U = F_{\eps,\sigma}^B(l_1,\ldots,l_{n-k+j})\cap U = F_j.
\end{align*}
Consequently,
\begin{align*}
F_{\delta,\,\pi}^B|_U(i_1,\ldots,i_{n-k+j})
=
F_{\eps,\sigma}^B(l_1,\ldots,l_{n-k+j})|_U
= F_j \neq \{0\}.
\end{align*}
Applying Proposition~\ref{Prop uniq} in the ambient space $U$ to the projected vectors $\Pi_U(y_1),\ldots,\Pi_U(y_n)$, we get $\sigma = \pi$, $l_p = i_p$ for all admissible $p$, and $\eps_{p}= \delta_{p}$ for all $1\leq p \leq l_{n-k+j}$.  But this implies that $F=G$, and thus, proves (i).

Now we will prove part (ii). Take $F\in \F^B_{d-k+j}(y_1,\ldots,y_n)$ satisfying $F\cap U\neq\{0\}$. Proposition~\ref{Prop_Char_WeylFaces}(i) implies that there are $1\le l_1<\ldots<l_{n-k+j}\le n$ and $\eps\in\{\pm 1\}^n$, $\sigma\in\mathcal{S}_n$, such that $F=F^B_{\eps,\sigma}(l_1,\ldots,l_{n-k+j})$. As we have seen in~\eqref{eq:F_B_U_rep} above, it follows
\begin{align*}
\{0\}\neq F\cap U=F^B_{\eps,\sigma}|_U(l_1,\ldots,l_{n-k+j}).
\end{align*}
Since $\Pi_U(y_1),\ldots,\Pi_U(y_n)$ satisfy the condition \ref{label_GP1} in the ambient linear subspace $U$, we can apply Proposition~\ref{Prop_Char_WeylFaces}(ii) in the subspace $U$, which yields that $F\cap U$ is $j$-face of the induced Weyl tessellation $\mathcal W^B|_U(y_1,\ldots,y_n)$.
\end{proof}

The $A_{n-1}$-version of this lemma is analogous and would require introducing the respective notation for the representatives of the Weyl faces of type $A_{n-1}$ induced in a linear subspace $U$. Since this finds no further application in this paper, we omit the result.


\subsection{Proofs of Theorems~\ref{Theorem_sizefunction_Dn_An-1} and~\ref{Theorem_sizefunction_Dn}}

Now, we finally prove the formulas for the expected size functionals of the random Weyl cones $\Dnb$:
\begin{align*}
\E Y_{d-k+j,\,d-k}(\Dnb)=\frac{2^{k-j}\tbinom{n}{k-j}D^B(n-k+j,j)}{2D^B(n,d)}\frac{n!}{(n-k+j)!}
\end{align*}
for all $1\le j\le k\le d$. Theorem~\ref{Theorem_sizefunction_Dn_An-1} is proven in the same way, using the respective results for the $A_{n-1}$-case. We omit the proof of the $A_{n-1}$-case.

Recall that the Weyl random cone $\Dnb$ is the cone chosen uniformly at random from the $D^B(n,d)$ cones of the random Weyl tessellation $\mathcal W^B(Y_1,\ldots,Y_n)$, where $Y_1,\ldots,Y_n$ are random vectors in $\R^d$ satsifying~\ref{label_GP1} a.s. Thus, its distribution is given by
\begin{align}
\label{Eq_Distr_Dn}
\PP(\Dnb\in B)=\int_{(\R^d)^n}\frac{1}{D^B(n,d)}\sum_{C\in\mathcal W^B(y_1,\ldots,y_n)}\1_B(C)\,\P_Y(\text{d}(y_1,\ldots,y_n))
\end{align}
for a Borel set $B$ of polyhedral cones, where $\P_Y$ denotes the joint probability law of $(Y_1,\ldots,Y_n)$ on $(\R^d)^n$.

\begin{proof}[Proof of Theorem~\ref{Theorem_sizefunction_Dn}]
Suppose $1\le j\le k\le d$. Using the definition of the size functional and (\ref{Eq_Distr_Dn}), we get
\begin{align*}
\E Y_{d-k+j,\,d-k}(\D^B_n)&=\E\sum_{F\in\F_{d-k+j}(\D_n^B)}U_{d-k}(F)\\
&	=\int_{(\R^{d})^n}\frac{1}{D^B(n,d)}\sum_{C\in\mathcal W^B(y_1,\ldots,y_n)}\:\sum_{F\in\F^B_{d-k+j}(C)}U_{d-k}(F)
\,\P_Y (\text{d}(y_1,\ldots,y_n)).
\end{align*}
In order to apply the definition (\ref{Eq_Quermassintegrals_No_Subspace}) of the quermassintegral $U_{d-k}$ we need to verify that the $(d-k+j)$-faces $F\in \F_{d-k+j}^B(y_1,\ldots,y_n)$ are a.s.\ not linear subspaces.
For this, it suffices to show that any Weyl cone $C\in\mathcal W^B(y_1,\ldots,y_n)$ is pointed (provided that $y_1,\ldots,y_n$ satisfy~\ref{label_GP1}), or equivalently, that $\linsp(C):=(-C)\cap C=\{0\}$. To this end, take a vector $v\in\linsp(C)$. We know that $C$ is of the form
\begin{align*}
\{v\in\R^d:\langle v,\eps_1y_{\sigma(1)}\rangle\le\ldots\le\langle v,\eps_ny_{\sigma(n)}\rangle\le 0\}
\end{align*}
for some $\eps\in\{\pm 1\}^n$ and $\sigma\in\mathcal S_n$.
So if $v\in\linsp(C)$, then $v$ is orthogonal to all of the vectors $\varepsilon_1y_{\sigma(1)}-\varepsilon_2y_{\sigma(2)},\varepsilon_2y_{\sigma(2)}-\varepsilon_3y_{\sigma(3)},\ldots,\varepsilon_{n-1}y_{\sigma(n-1)}-\varepsilon_ny_{\sigma(n)}, \varepsilon_ny_{\sigma(n)}$. Since arbitrary $d$ of these vectors
are linearly independent due to~\ref{label_GP1}, we deduce that $v=0$.

Now, we can apply (\ref{Eq_Quermassintegrals_No_Subspace}) and then interchange the integral and the sums. This yields
\begin{align}\label{Eq_Integral_1}
\E Y_{d-k+j,\,d-k}(\D_n^B)
&	=\frac{1}{2D^B(n,d)}\int_{(\R^{d})^n}\sum_{F\in\F^B_{d-k+j}(y_1,\ldots,y_n)}\:\sum_{C\in\mathcal W^B(y_1,\ldots,y_n)}\1_{\{F\subseteq C\}}\nonumber\\
&	\quad\times\int_{G(d,k)}\1_{\{F\cap U\neq\{ 0\}\}}\,\nu_k(\text{d}U)\,\P_Y (\text{d}(y_1,\ldots,y_n))\nonumber\\
&	=\frac{1}{2D^B(n,d)}\int_{(\R^{d})^n}\int_{G(d,k)}\sum_{F\in\F^B_{d-k+j}(y_1,\ldots,y_n)}\1_{\{F\cap U\neq\{0\}\}}\sum_{C\in\mathcal W^B(y_1,\ldots,y_n)}\1_{\{F\subseteq C\}}\nonumber\\
&	\quad\times\nu_k(\text{d}U)
\,\P_Y (\text{d}(y_1,\ldots,y_n)).
\end{align}
Our goal is to show that the sums inside the integrals are constant for $\nu_k$-almost every $U\in G(d,k)$ and $\P_Y$-almost every $(y_1,\ldots,y_n)\in (\R^d)^n$. Using Lemma~\ref{Lemma_Faces_Induced_in_L}, we obtain
\begin{align}
\sum_{F\in\F^B_{d-k+j}(y_1,\ldots,y_n)}\1_{\{F\cap U\neq\{0\}\}}\sum_{C\in\mathcal W^B(y_1,\ldots,y_n)}\1_{\{F\subseteq C\}}
&=
\sum_{F_j\in\F_j^B|_U(y_1,\ldots,y_n)}\:\sum_{D\in\mathcal W^B|_U(y_1,\ldots,y_n)}\1_{\{F_j\subseteq D\}}\label{Eq_Proof_Size_Dn_Sum}
\end{align}
for almost every $U\in G(d,k)$ and $\P_Y$-almost every $(y_1,\ldots,y_n)$. Indeed, by Lemma~\ref{Lemma_Faces_Induced_in_L} there is  a one-to-one correspondence between the pairs $F\subseteq C$ such that $F\cap U \neq \{0\}$ and the pairs $F_j\subseteq D$ as above.  Note that Lemma~\ref{Lemma_Faces_Induced_in_L} was applicable, since $\nu_k$-almost every $U\in G(d,k)$ is in general position with respect to the arrangement $\mathcal{A}^B(y_1,\ldots,y_n)$, due to Example~\ref{Remark_General_position_Uniform_Subspace}, and almost every set of vectors $(y_1,\ldots,y_n)$ satisfies the general position assumption \ref{label_GP1}.

Applying Proposition~\ref{prop:faces_with_multiplicity_B} to the ambient linear subspace $U$ instead of $\R^d$ and the projections $\Pi_{U}(y_1),\ldots,\Pi_{U}(y_n)$ instead of $y_1,\ldots,y_n$, we obtain
\begin{align}\label{eq:aux1}
\sum_{F_j\in\F_j^B|_U(y_1,\ldots,y_n)}\:\sum_{D\in\mathcal W^B|_U(y_1,\ldots,y_n)}\1_{\{F_j\subseteq D\}}
=
2^{k-j}\binom{n}{k-j}\frac{n!}{(n-k+j)!}D^B(n-k+j,j).
\end{align}
To see that Proposition~\ref{prop:faces_with_multiplicity_B} is applicable, note that $\nu_k$-a.e.\ $U$ is in general position with respect to the arrangement $\A^B(y_1,\ldots,y_n)$ and hence the projections $\Pi_{U}(y_1),\ldots,\Pi_{U}(y_n)$ satisfy assumption~\ref{label_GP1}, as we have shown in the proof of Lemma~\ref{Lemma_Faces_Induced_in_L}.

Inserting~\eqref{eq:aux1} and~\eqref{Eq_Proof_Size_Dn_Sum} into~\eqref{Eq_Integral_1}, we arrive at
$$
\E Y_{d-k+j,\,d-k}(\D_n^B)
=
\frac{1}{2D^B(n,d)} \cdot 2^{k-j}\binom{n}{k-j}\frac{n!}{(n-k+j)!}D^B(n-k+j,j),
$$
which completes the proof.
\end{proof}


\section{General Position: Proofs of Theorems~\ref{Theorem_Aequivalenz_B1_B2} and~\ref{Theorem_Aequivalenz_A1_A2}}\label{Subsection_Proof_GeneralPosition}


\subsection{Equivalences of \ref{label_GP1} and \ref{label_GP2}, \ref{label_GP1_A} and \ref{label_GP2_A}}
In this section, we will prove that assumption~\ref{label_GP1} is equivalent to~\ref{label_GP2} and, similarly, \ref{label_GP1_A} is equivalent to~\ref{label_GP2_A}.

\begin{proof}[Proof of Theorem~\ref{Theorem_Aequivalenz_B1_B2}]\label{Proof_Aequivalenz_B1_B2}
Take some vectors $y_1,\ldots,y_n\in\R^d$ with $n\ge d$. We claim that the following conditions are equivalent:
\begin{enumerate}[label=(B\arabic*), leftmargin=50pt]
\item For every $\varepsilon=(\varepsilon_1,\ldots,\varepsilon_n)\in\{\pm 1\}^n$ and $\sigma\in\mathcal{S}_n$ the vectors $\varepsilon_1y_{\sigma(1)}-\varepsilon_2y_{\sigma(2)},\varepsilon_2y_{\sigma(2)}-\varepsilon_3y_{\sigma(3)},\ldots,\varepsilon_{n-1}y_{\sigma(n-1)}-\varepsilon_ny_{\sigma(n)}, \varepsilon_ny_{\sigma(n)}$ are in general position.
\item The linear subspace $L^\perp$ has dimension $d$ and is in general position with respect to the reflection arrangement $\mathcal{A}(B_n)$, where $L:=\{\beta\in\R^n:\beta_1y_1+\ldots+\beta_ny_n=0\}$.
\end{enumerate}

At first, we prove that \ref{label_GP2} implies \ref{label_GP1}. Assume that~\ref{label_GP2} holds true but at the same time~\ref{label_GP1} is violated. Then, there exist $\eps\in\{\pm 1\}^n$ and $\sigma\in\mathcal{S}_n$ such that the vectors
\begin{align*}
\eps_1y_{\sigma(1)}-\eps_2y_{\sigma(2)},\ldots,\eps_{n-1}y_{\sigma(n-1)}-\eps_ny_{\sigma(n)},\eps_ny_{\sigma(n)}
\end{align*}
are not in general position. Applying a suitable signed permutation of the coordinates, we may assume that $\eps_i=1$ and $\sigma(i)=i$ for all $i$.  Thus, $y_1-y_2,\ldots,y_{n-1}-y_n,y_n$ are not in general position. This means that there is a subset of $d$ or fewer linearly dependent vectors. In general, this subset is of the form
\begin{align*}
\underbrace{y_1-y_2,\ldots,y_{i_1-1}-y_{i_1}}_{\text{group $1$}},\underbrace{y_{i_1+1}-y_{i_1+2},\ldots,y_{i_2-1}-y_{i_2}}_{\text{group $2$}},\ldots,\underbrace{y_{i_k+1}-y_{i_k+2},\ldots,y_{n-1}-y_n,y_n}_{\text{group $k+1$}}
\end{align*}
for a $k\ge n-d$ and suitable indices $1\le i_1<i_2<\ldots<i_k\le n$. Note that each of these groups may be empty and the set consists of $n-k\le d$ vectors. This set is linearly dependent if and only if there exist numbers $\lambda_i$ with $i\in\{1,\ldots,n\}\backslash\{i_1,\ldots,i_k\}$ that do not vanish simultaneously and such that
\begin{align}
0
&	=\lambda_1(y_1-y_2)+\ldots+\lambda_{i_1-1}(y_{i_1-1}-y_{i_1})+\lambda_{i_1+1}(y_{i_1+1}-y_{i_1+2})+\ldots+\lambda_{i_2-1}(y_{i_2-1}-y_{i_2})\notag\\	&	\quad+\ldots+\lambda_{i_k+1}(y_{i_k+1}-y_{i_k+2})+\ldots+\lambda_{n-1}(y_{n-1}-y_n)+\lambda_ny_n. \label{eq:aaaaa}
\end{align}
Denote by $e_1,\ldots,e_n$ the standard Euclidean basis in $\R^n$ and define a linear operator $A:\R^n \to \R^d$ by $A e_1 = y_1,\ldots, A e_n = y_n$.
%
Then, the above equality holds if and only if there exist numbers $\lambda_i$ with $i\in\{1,\ldots,n\}\backslash\{i_1,\ldots,i_k\}$ that do not vanish simultaneously and such that the vector
\begin{multline*}
\lambda_1(e_1-e_2)+\ldots+\lambda_{i_1-1}(e_{i_1-1}-e_{i_1})+\lambda_{i_1+1}(e_{i_1+1}-e_{i_1+2})+\ldots+\lambda_{i_2-1}(e_{i_2-1}-e_{i_2})+\\
\ldots+\lambda_{i_k+1}(e_{i_k+1}-e_{i_k+2})+\ldots+\lambda_{n-1}(e_{n-1}-e_n)+\lambda_ne_n
\end{multline*}
is contained in $\Ker A =\{\beta\in\R^n:\beta_1y_1+\ldots+\beta_ny_n=0\}=L$. This is equivalent to
\begin{multline*}
\lin\big\{e_1-e_2,\ldots,e_{i_1-1}-e_{i_1},e_{i_1+1}-e_{i_1+2},\ldots,e_{i_2-1}-e_{i_2},\\\ldots,e_{i_k+1}-e_{i_k+2},\ldots,e_{n-1}-e_n,e_n\big\}\cap L\neq \{0\}.
\end{multline*}
This holds if and only if $K^\perp\cap L\neq \{0\}$ for the $k$-dimensional subspace
\begin{align*}
K
	=\{\beta\in\R^n:\beta_1=\ldots=\beta_{i_1},\ldots,\beta_{i_{k-1}+1}=\ldots=\beta_{i_k},\beta_{i_k+1}=\ldots=\beta_n=0\}. 
\end{align*}
We observe that $K$ is an intersection of hyperplanes from the reflection arrangement $\A(B_n)$. Then, $K^\perp\cap L\neq \{0\}$ is equivalent to
\begin{align*}
\dim(L^\perp\cap K)=n-\dim(L+K^\perp)=d-n+k+\dim(L\cap K^\perp) \neq d-n+k,
\end{align*}
since $\dim(L)=n-d$. This means that $L^\perp$ is not in general position to $\A(B_n)$, which is a contradiction to \ref{label_GP2}.


It is left to prove that \ref{label_GP1} implies \ref{label_GP2}. Let \ref{label_GP1} hold true for $y_1,\ldots,y_n\in\R^d$. This implies that $\dim L^\perp=d$. In order to prove this, it is enough to show that, for example, the set of $d$ vectors $y_{n-d+1},\ldots,y_n$ is linearly independent. Suppose $\lambda_{n-d+1}y_{n-d+1}+\ldots+\lambda_ny_n=0$ holds for some $\lambda_{n-d+1},\ldots,\lambda_n\in\R$. Representing the individual $y_j$'s as telescope sums, this implies
\begin{align*}
0&=\lambda_{n-d+1}((y_{n-d+1}-y_{n-d+2})+\ldots+(y_{n-1}-y_n)+y_n)+\ldots+\lambda_{n-1}((y_{n-1}-y_n)+y_n)+\lambda_ny_n\\
&	=\lambda_{n-d+1}(y_{n-d+1}-y_{n-d+2})+\ldots+(\lambda_{n-d+1}+\ldots+\lambda_n)y_n.
\end{align*}
Since $y_{n-d+1}-y_{n-d+2},\ldots,y_{n-1}-y_n,y_n$ are linearly independent, due to \ref{label_GP1}, it follows that $\lambda_{n-d+1}=\ldots=\lambda_n=0$, which proves the linear independence of $y_{n-d+1},\ldots,y_n$.

Now, suppose $L^\perp$ is not in general position to $\A(B_n)$. Thus, there exists a $k$-dimensional subspace $K'$ that can be represented as the intersection of hyperplanes from $\A(B_n)$, such that
\begin{align*}
\dim(K'\cap L^\perp)
\neq\max\{0,d-n+k\}.
\end{align*}
The linear subspace $K'$ is given by a set of equations of the following form. The coordinates $\beta_1,\ldots,\beta_n$ are decomposed into $k+1$ distinguishable groups. These groups are required to be non-empty except the last one. All coordinates in the last group must be 0. For the remaining variables there is a unique choice of signs, which multiplies each variable by $+1$ or $-1$, such that the sign-changed variables are equal inside every group, except the last one.
Applying a suitable signed permutation of the coordinates,  we may assume that $K'$ has the following form:
\begin{align}\label{Eq_Subspace_L(B_n)_Bsp}
K' = K = \{\beta\in\R^n:\beta_1=\ldots=\beta_{i_1},\ldots,\beta_{i_{k-1}+1}=\ldots=\beta_{i_k},\beta_{i_k+1}=\ldots=\beta_n=0\}
\end{align}
for some $1\le i_1<\ldots<i_k\le n$. 

At first, suppose $k\ge n-d$, which implies $\dim(K'\cap L^\perp)\neq d-n+k$.
Now we can refer to the first part of the proof, since all the steps in the argument are equivalences. We conclude that
$y_1-y_2,\ldots,y_{n-1}-y_n,y_n$ are not in general position, and thus, \ref{label_GP1} is not satisfied. 

In the case $k <  n-d$, we know that $\dim(K'\cap L^\perp)>0$.
Thus, there is a linear subspace $K''\supseteq K'$ that can also be represented as the intersection of hyperplanes from $\A(B_n)$, such that $\dim(K'')=n-d$ and $\dim(K''\cap L^\perp)>0$. Note that this subspace $K''$ can be obtained by deleting $(n-d)-k$ equations in the defining condition of $K'$. The previous case, applied to $K''$ instead of $K'$, yields that  the general position assumption \ref{label_GP1} is not satisfied, which is a contradiction.
\end{proof}


\begin{proof}[Proof of Theorem~\ref{Theorem_Aequivalenz_A1_A2}]
Similar to the proof of Theorem~\ref{Theorem_Aequivalenz_B1_B2}.
\end{proof}
\subsection{Sufficient conditions for general position}
Let us state a simple yet general sufficient condition under which the
assumptions~\ref{label_GP1_A} and~\ref{label_GP1} are fulfilled a.s.
\begin{lem}\label{Lemma_AS_General_Position}
Let $\mu$ be a $\sigma$-finite Borel measure on $\R^d$  that assigns measure zero to each affine hyperplane,
i.e.~each $(d-1)$-dimensional affine subspace.
Furthermore, let $Y_1,\ldots,Y_n$ be random vectors in $\R^d$ having a joint $\mu^n$-density on $(\R^d)^n$.
Then  $Y_1,\ldots,Y_n$ satisfy assumptions~\ref{label_GP1_A}
(provided $n\geq d+1$) and~\ref{label_GP1} (provided $n\geq d$) with probability $1$.
\end{lem}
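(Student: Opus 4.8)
The plan is to reduce the almost sure validity of \ref{label_GP1_A} and \ref{label_GP1} to a single clean statement about random vectors possessing a $\mu^n$-density, and then prove that statement by induction. The deterministic cardinality requirements ($n\ge d+1$ for \ref{label_GP1_A}, $n\ge d$ for \ref{label_GP1}) hold by hypothesis, so what remains is a \emph{finite} list of ``bad events'': for each of the finitely many $\sigma\in\mathcal S_n$, each sign vector $\varepsilon\in\{\pm1\}^n$ (in the $B_n$-case), and each $d$-element subset of the relevant list of vectors, the event that these $d$ vectors are linearly \emph{dependent}. By a union bound it suffices to show that each such event has probability $0$.

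The key reduction is that each of the $d$ vectors in question is a \emph{fixed} linear combination $\sum_{j=1}^n a_{pj}Y_j$, hence determined by its coefficient vector $a_p=(a_{p1},\ldots,a_{pn})\in\R^n$, and I would first check that the $d$ coefficient vectors are linearly independent in $\R^n$. In the $A_{n-1}$-case they are differences $e_{\sigma(m)}-e_{\sigma(m+1)}$ indexed by a subset of the edges of the path $\sigma(1)-\cdots-\sigma(n)$, hence a forest, hence independent; in the $B_n$-case the full list $\varepsilon_m e_{\sigma(m)}-\varepsilon_{m+1}e_{\sigma(m+1)}$ ($1\le m\le n-1$) together with $\varepsilon_n e_{\sigma(n)}$ forms, in the basis $(e_{\sigma(1)},\ldots,e_{\sigma(n)})$, a lower bidiagonal matrix with diagonal $(\varepsilon_1,\ldots,\varepsilon_n)$ and determinant $\pm1$, so every $d$-subset is independent. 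Thus everything follows from the \textbf{general claim}: if $\mu$ vanishes on affine hyperplanes, $Z_1,\ldots,Z_N$ have a joint density with respect to $\mu^N$, and $A=(a_{pj})$ is a real $k\times N$ matrix of rank $k$ with $k\le d$, then $v_p:=\sum_j a_{pj}Z_j$, $p=1,\ldots,k$, are almost surely linearly independent.

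I would prove the general claim by induction on $k$. For $k=1$, a nonzero coefficient row forces $v_1=0$ to pin one coordinate $Z_{j_0}$ to a single point once the others are fixed; single points lie in hyperplanes and are $\mu$-null, so Fubini gives $\P(v_1=0)=0$. For the inductive step, linear independence of the $v_p$ is unchanged under replacing $A$ by $BA$ with $B\in\mathrm{GL}_k$, so after row reduction I may assume $A$ is in reduced row echelon form with pivot columns $j_1<\cdots<j_k$; then the pivot variable $Z_{j_k}$ occurs in $v_k$ with coefficient $1$ and in none of $v_1,\ldots,v_{k-1}$. The vectors $v_1,\ldots,v_{k-1}$ involve only $\{Z_j:j\ne j_k\}$ and have a rank-$(k-1)$ coefficient matrix, so by the inductive hypothesis (applied to the marginal density of $\{Z_j:j\ne j_k\}$, which is again a $\mu^{N-1}$-density) they are independent for $\mu^{N-1}$-almost every value of these variables. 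On that event their span $V$ has dimension $k-1$, and writing $v_k=Z_{j_k}+w$ with $w$ independent of $Z_{j_k}$, dependence of $v_1,\ldots,v_k$ amounts to $Z_{j_k}\in V-w$, an affine set of dimension $\le d-1$ contained in a hyperplane, hence $\mu$-null; integrating first over $Z_{j_k}$ and then over the remaining variables via Fubini closes the induction.

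The main obstacle is the measure-theoretic bookkeeping in the inductive step: one must disintegrate the $\mu^N$-density as a $\mu^{N-1}$-density times $\mu$, control the $\mu^{N-1}$-null set of ``bad'' conditioning values furnished by the inductive hypothesis, and only then invoke the hyperplane-null property to annihilate the inner $Z_{j_k}$-integral. The linear-algebra inputs (the forest/bidiagonal independence of the coefficient vectors and the reduced-row-echelon normalization isolating a single variable) are routine, but they are precisely what reduces the exceptional set to an affine \emph{hyperplane} rather than to a general hypersurface, which an arbitrary $\mu$ need not charge with measure zero.
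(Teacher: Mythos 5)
Your proof is correct, and it takes a genuinely different route from the paper's. The paper fixes $\sigma$ and $\eps$ (absorbing them into the joint density), assumes a subset of $n-k\le d$ of the vectors $Y_1-Y_2,\ldots,Y_{n-1}-Y_n,Y_n$ is linearly dependent, solves the dependence relation for the first $Y_i$ with nonvanishing coefficient, and reads off directly that this $Y_i$ lies in an explicit affine subspace $L(Y_2,\ldots,Y_n)$ of dimension at most $n-k-1<d$, built from affine hulls and linear spans of groups of the remaining $Y_j$'s; a single conditioning on $(Y_2,\ldots,Y_n)$ then finishes via the hyperplane-null hypothesis and Fubini. You instead isolate a general lemma --- at most $d$ linear combinations of variables with a joint $\mu^N$-density, taken with a full-rank coefficient matrix, are a.s.\ linearly independent --- and prove it by induction on the number of combinations, using row reduction to expose a pivot variable occurring in exactly one combination. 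The measure-theoretic core (condition on all variables but one, observe that the bad event confines that variable to an affine set of dimension at most $d-1$, hence to a $\mu$-null affine hyperplane, and integrate) is the same in both arguments. What your route buys is generality and a clean separation of concerns: the only structural input about the specific vectors appearing in \ref{label_GP1_A} and \ref{label_GP1} is the linear independence of their coefficient rows (your forest and bidiagonal observations, both of which are correct), which replaces the paper's explicit telescoping computation; the cost is the induction, the reduced-row-echelon normalization, and the extra bookkeeping of passing to the marginal $\mu^{N-1}$-density at each step, all of which you handle correctly.
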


\begin{proof}
Since \ref{label_GP1} implies~\ref{label_GP1_A}, we only need to prove that \ref{label_GP1} holds a.s.
Since $Y_1,\ldots,Y_n$ have a joint density function with respect to $\mu^n$, so do $\eps_1Y_{\sigma(1)},\ldots,\eps_nY_{\sigma(n)}$, for each $\eps\in\{\pm 1\}^n$ and $\sigma\in \mathcal{S}_n$. Therefore, it suffices to prove that $Y_1-Y_2,\ldots,Y_{n-1}-Y_n,Y_n$ are in general position a.s., or equivalently, that  they are not in general position with probability $0$. To this end,
suppose there is a subset of $n-k\leq d$ linearly dependent vectors. Recalling the proof of Theorem~\ref{Theorem_Aequivalenz_B1_B2}, this set is of the form
\begin{align*}
Y_1-Y_2,\ldots,Y_{i_1-1}-Y_{i_1},Y_{i_1+1}-Y_{i_1+2},\ldots,Y_{i_2-1}-Y_{i_2},\ldots,Y_{i_k+1}-Y_{i_k+2},\ldots,Y_{n-1}-Y_n,Y_n
\end{align*}
for suitable indices $1\le i_1<\ldots<i_k\le n$. This means that there are numbers $\lambda_i$ with $i\in\{1,\ldots,n\}\backslash\{i_1,\ldots,i_k\}$ that do not vanish simultaneously and such that
\begin{align}\label{Eq_General_Pos_Aff_Subspace}
\lambda_1Y_1	
&	=(\lambda_1-\lambda_2)Y_2+\ldots+(\lambda_{i_1-2}-\lambda_{i_1-1})Y_{i_1-1}+\lambda_{i_1-1}Y_{i_1}\nonumber\\
&	\quad+(-\lambda_{i_1+1})Y_{i_1+1}+(\lambda_{i_1+1}-\lambda_{i_1+2})Y_{i_1+2}+\ldots+(\lambda_{i_2-2}-\lambda_{i_2-1})Y_{i_2-1}+\lambda_{i_2-1}Y_{i_2}\\
&	\quad+\ldots+(-\lambda_{i_k+1})Y_{i_k+1}+(\lambda_{i_k+1}-\lambda_{i_k+2})Y_{i_k+2}+\ldots+(\lambda_{n-1}-\lambda_n)Y_n\nonumber
\end{align}
holds true (see \eqref{eq:aaaaa} solved for $\lambda_1Y_1$). Without loss of generality, we may assume that $\lambda_1\neq 0$ (otherwise, choose the smallest $i$, such that $\lambda_i\neq 0$ and solve for $\lambda_iY_i$). Divide~\eqref{Eq_General_Pos_Aff_Subspace} by $\lambda_1$.   The possible values of the first line coincide with the affine hull of $Y_2,\ldots,Y_{i_1}$ denoted by $\aff\{Y_2,\ldots,Y_{i_1}\}$, since the coefficients of the $Y_i$'s satisfy the relation
\begin{align*}
\frac{\lambda_1-\lambda_2}{\lambda_1}+\ldots+\frac{\lambda_{i_1-2}-\lambda_{i_1-1}}{\lambda_1}+\frac{\lambda_{i_1-1}}{\lambda_1}=1.
\end{align*}
The dimension of this affine subspace is at most $i_1-2$. The possible values of the second line of (\ref{Eq_General_Pos_Aff_Subspace}), divided by $\lambda_1$,  define the linear subspace
\begin{align*}
L_1:=\{\beta_{i_1+1}Y_{i_1+1}+\ldots+\beta_{i_2}Y_{i_2}:\beta_{i_1+1}+\ldots+\beta_{i_2}=0\},
\end{align*}
since the coefficients satisfy the relation
\begin{align*}
\frac{-\lambda_{i_1+1}}{\lambda_1}+\frac{\lambda_{i_1+1}-\lambda_{i_1+2}}{\lambda_1}+\ldots+\frac{\lambda_{i_2-2}-\lambda_{i_2-1}}{\lambda_1}+\frac{\lambda_{i_2-1}}{\lambda_1}=0.
\end{align*}
Similarly, the subsequent lines, except the last one, define linear subspaces $L_2,\ldots,L_{k-1}$. The dimension of the linear subspaces $L_1,\ldots,L_{k-1}$ is at most $i_2-i_1-1,\ldots,i_{k}-i_{k-1}-1$, respectively. Thus, (\ref{Eq_General_Pos_Aff_Subspace}) implies that
\begin{align*}
Y_1\in L(Y_2,\ldots,Y_n):=\aff\{Y_2,\ldots,Y_{i_1}\}+L_1+\ldots+L_{k-1}+\lin\{Y_{i_k+1},\ldots,Y_n\},
\end{align*}
and the dimension of the affine subspace $L(Y_2,\ldots,Y_n)$ is at most
\begin{align*}
(i_1-2)+(i_2-i_1-1)+\ldots+(i_k-i_{k-1}-1)+(n-i_k)=n-k-1<d.
\end{align*}

It remains to show that the event $Y_1\in L(Y_2,\ldots,Y_n)$ has probability $0$. Now, since $(Y_1,\ldots,Y_n)$ has a joint $\mu^n$-density, the conditional $\mu$-density of $Y_1$ conditioned on the event that $(Y_2,\ldots,Y_n)=(y_2,\ldots,y_n)$ exists.
Recalling that  $\mu$ assigns measure $0$ to each affine hyperplane, we conclude that  the conditional probability that $Y_1\in L(Y_2,\ldots,Y_n)$  given that $(Y_2,\ldots,Y_n)=(y_2,\ldots,y_n)$ vanishes.  Integrating over all tuples $(y_2,\ldots,y_n)$, we conclude that the probability that $Y_1\in L(Y_2,\ldots,Y_n)$ is $0$.
\end{proof}



\section*{Acknowledgement}
Supported by the German Research Foundation under Germany's Excellence Strategy  EXC 2044 -- 390685587, \textit{Mathematics M\"unster: Dynamics - Geometry - Structure}  and by the DFG priority program SPP 2265 \textit{Random Geometric Systems}.
We thank R.\ Schneider for pointing out an error related to Lemma~\ref{Lemma_Cone_Subspace_LinSp} and to the anonymous referees for useful suggestions that considerably improved the presentation.

\vspace{1cm}

\bibliography{bibliography}

\begin{thebibliography}{10}

\bibitem{Amelunxen2017}
D.~Amelunxen and M.~Lotz.
\newblock Intrinsic volumes of polyhedral cones: A combinatorial perspective.
\newblock {\em Discrete {\&} Computational Geometry}, 58(2):371--409, jul 2017.

\bibitem{Amelunxen2014}
D.~Amelunxen, M.~Lotz, M.~B. McCoy, and J.~A. Tropp.
\newblock Living on the edge: phase transitions in convex programs with random
  data.
\newblock {\em Information and Inference}, 3(3):224--294, jun 2014.

\bibitem{arbeiter_zaehle}
E.~Arbeiter and M.~Z\"{a}hle.
\newblock Geometric measures for random mosaics in spherical spaces.
\newblock {\em Stochastics Stochastics Rep.}, 46(1-2):63--77, 1994.

\bibitem{bagno_biagioli_garber_some_identities}
E.~Bagno, R.~Biagioli, and D.~Garber.
\newblock Some identities involving second kind {S}tirling numbers of types {B}
  and {D}.
\newblock {\em Elect. J. Combin.}, 26(3):P3.9, 2019.

\bibitem{bagno_garber_balls}
E.~Bagno and D.~Garber.
\newblock Signed partitions - {A} balls into urns approach, 2019.
\newblock Preprint at arXiv: 1903.02877.

\bibitem{bala_stirling}
P.~Bala.
\newblock A {$3$}-parameter family of generalized {S}tirling numbers, 2015.
\newblock Preprint at https://oeis.org/A143395/a143395.pdf.

\bibitem{barany_etal}
I.~B\'{a}r\'{a}ny, D.~Hug, M.~Reitzner, and R.~Schneider.
\newblock Random points in halfspheres.
\newblock {\em Random Structures Algorithms}, 50(1):3--22, 2017.

\bibitem{CoverEfron_paper}
T.~Cover and B.~Efron.
\newblock Geometrical probability and random points on a hypersphere.
\newblock {\em Ann. Math. Stat}, 38:213--220, 1967.

\bibitem{dowling}
T.~A. Dowling.
\newblock A class of geometric lattices based on finite groups.
\newblock {\em J. Combinatorial Theory Ser. B}, 14:61--86, 1973.

\bibitem{drton_klivans}
M.~{Drton} and C.~J. {Klivans}.
\newblock {A geometric interpretation of the characteristic polynomial of
  reflection arrangements.}
\newblock {\em {Proc. Am. Math. Soc.}}, 138(8):2873--2887, 2010.

\bibitem{henze_orakel}
N.~Henze.
\newblock Weitere {{\"U}}berraschungen im {Z}usammenhang mit dem
  {S}chnur-{O}rakel.
\newblock {\em Stochastik in der Schule}, 33(3):18--23, 2013.

\bibitem{HugSchneider2016}
D.~Hug and R.~Schneider.
\newblock Random conical tessellations.
\newblock {\em Discrete {\&} Computational Geometry}, 56(2):395--426, may 2016.

\bibitem{hug_thaele}
D.~Hug and C.~Th\"{a}le.
\newblock Splitting tessellations in spherical spaces.
\newblock {\em Electron. J. Probab.}, 24:Paper No. 24, 60, 2019.

\bibitem{humphreys_book}
J.~E. Humphreys.
\newblock {\em Reflection groups and {C}oxeter groups}, volume~29 of {\em
  Cambridge Studies in Advanced Mathematics}.
\newblock Cambridge University Press, Cambridge, 1990.

\bibitem{cones_halfsphere}
Z.~Kabluchko, A.~Marynych, D.~Temesvari, and C.~Th\"{a}le.
\newblock Cones generated by random points on half-spheres and convex hulls of
  {P}oisson point processes.
\newblock {\em Probab. Theory Related Fields}, 175(3-4):1021--1061, 2019.

\bibitem{kabluchko_thaele_conv}
Z.~Kabluchko, D.~Temesvari, and C.~Th{\"a}le.
\newblock A new approach to weak convergence of random cones and polytopes.,
  2020.
\newblock Preprint at arXiv: 2003.04001.

\bibitem{kabluchko_thaele_vor}
Z.~Kabluchko and C.~Th{\"a}le.
\newblock The typical cell of a {V}oronoi tessellation on the sphere., 2019.
\newblock Preprint at arXiv: 1911.07221.

\bibitem{KVZ17}
Z.~Kabluchko, V.~Vysotsky, and D.~Zaporozhets.
\newblock Convex hulls of random walks: expected number of faces and face
  probabilities.
\newblock {\em Adv. Math.}, 320:595--629, 2017.

\bibitem{KVZ15}
Z.~Kabluchko, V.~Vysotsky, and D.~Zaporozhets.
\newblock Convex hulls of random walks, hyperplane arrangements, and {W}eyl
  chambers.
\newblock {\em Geom. Funct. Anal.}, 27(4):880--918, 2017.

\bibitem{Kabluchko2019}
Z.~Kabluchko, V.~Vysotsky, and D.~Zaporozhets.
\newblock A multidimensional analogue of the arcsine law for the number of
  positive terms in a random walk.
\newblock {\em Bernoulli}, 25(1):521--548, feb 2019.

\bibitem{lang_stirling}
W.~Lang.
\newblock On sums of powers of arithmetic progressions, and generalized
  {S}tirling, {E}ulerian and {B}ernoulli numbers, 2017.
\newblock Preprint at arXiv: 1707.04451.

\bibitem{miles}
R.~E. Miles.
\newblock Random points, sets and tessellations on the surface of a sphere.
\newblock {\em Sankhy\={a} Ser. A}, 33:145--174, 1971.

\bibitem{Schlaefli_buch_1950}
L.~Schläfli.
\newblock {\em {Gesammelte Mathematische Abhandlungen I.}}
\newblock Verlag Birkh\"auser, 1950.

\bibitem{schneider_intersection}
R.~Schneider.
\newblock Intersection probabilities and kinematic formulas for polyhedral
  cones.
\newblock {\em Acta Math. Hungar.}, 155(1):3--24, 2018.

\bibitem{Schneider2008}
R.~Schneider and W.~Weil.
\newblock {\em Stochastic and Integral Geometry}.
\newblock Springer Berlin Heidelberg, 2008.

\bibitem{sloane}
N.~J.~A. Sloane~(editor).
\newblock {The {O}n-{L}ine {E}ncyclopedia of {I}nteger {S}equences}.
\newblock https://oeis.org.

\bibitem{suter}
R.~Suter.
\newblock Two analogues of a classical sequence.
\newblock {\em J. Integer Seq.}, 3(1):Article 00.1.8, 1 HTML document, 2000.

\bibitem{vysotsky_zaporozhets}
V.~Vysotsky and D.~Zaporozhets.
\newblock Convex hulls of multidimensional random walks.
\newblock {\em Trans. Amer. Math. Soc.}, 370(11):7985--8012, 2018.

\bibitem{Wendel_paper}
J.~Wendel.
\newblock A problem in geometric probability.
\newblock {\em Math. Scandinavica}, 11:109--111, 1962.

\bibitem{Zeigler_LecturesOnPolytopes}
G.~M. Ziegler.
\newblock {\em Lectures on polytopes}.
\newblock Springer-Verlag, New York, 1995.

\end{thebibliography}
\bibliographystyle{abbrv}



\end{document}